\newtheorem{thm}{Theorem}[section]
\newtheorem{lem}[thm]{Lemma}
\newtheorem{prop}[thm]{Proposition}
\newtheorem{cor}[thm]{Corollary}
\newtheorem{remark}[thm]{Remark}
\theoremstyle{definition}
\newtheorem{defn}{Definition}[section]
\begin{document}

\setlength\parindent{0pt}

\vspace{-3em}

        \author{Albert Chau$^1$}
        \address{Department of Mathematics,
                The University of British Columbia, Room 121, 1984 Mathematics
                Road, Vancouver, B.C., Canada V6T 1Z2} \email{chau@math.ubc.ca}

        \thanks{$^1$Research
                partially supported by NSERC grant no. \#327637-06}

        \author[A. Martens]{Adam Martens}
        \address{Department of Mathematics, The University of British Columbia,
1984 Mathematics Road, Vancouver, B.C.,  Canada V6T 1Z2}
\email{martens@math.ubc.ca}

        \bibliographystyle{amsplain}
\title{Long-time Ricci flow existence and topological rigidity from manifolds with pinched scale-invariant integral curvature}

\maketitle

\pagestyle{plain}

\vspace{-3em}

\begin{abstract}
We prove long-time existence of the Ricci flow starting from complete manifolds with bounded curvature and scale-invariant integral curvature sufficiently pinched with respect to the inverse of its Sobolev constant. Moreover, if the curvature is sub-critical $L^p$-integrable, this flow converges locally smoothly to a limiting metric $g(\infty)$ on $M$ with $(M,g(\infty))$ isometric to the standard flat $\mathbb{R}^n$, which implies topological rigidity of $M$. This generalizes work of Chen \cite{ChenEric}, who proved analogous results for asymptotically flat manifolds. We also prove a long-time Ricci flow existence (and likewise topological rigidity) result for unbounded curvature initial data, assuming the initial data is a locally smooth limit of bounded curvature manifolds as described above.
\vspace{-1em}
\end{abstract}

\begin{center}
\vspace*{2em}
{\itshape \textbf{In celebration and memory of Richard S. Hamilton}}
\vspace*{2em}
\end{center}

\maketitle
\tableofcontents

\section{Introduction}
The Ricci flow is the following evolution equation for an initial Riemannian metric $g_0$ on a $n$-dimensional manifold $M^n$:
\begin{equation}\label{RF}
                \left\{
                \begin{aligned}
                        \frac{{\partial }}{{\partial t}}g(t) &= -2 \text{Rc}_{g(t)}, \\
                        g(0) &= g_0.
                \end{aligned}
                \right.
        \end{equation}

A classical result of Shi \cite{Shi} says that if $g_0$ is complete with sectional curvatures uniformly bounded by $K$, then \eqref{RF} admits a complete bounded curvature solution $g(t)$ on $M^n\times[0, T(n, K))$ for some time $T(n, K)>0$ depending only on $n$ and $K$.  Extending Shi's theorem to cases of unbounded curvature, or by extending the solution for all times $t\in [0, \infty)$ under suitable conditions are of fundamental importance in Ricci flow theory and applications.  One theme of recent prevalence is to consider manifolds satisfying some Sobolev-type control coupled with a smallness of the scale-invariant integral curvature 
\[
\int_\Omega |\text{Rm}|^{\frac{n}{2}}\,dV_g,
\]
which is often referred to as the \textit{curvature concentration} (this smallness is thought of as a type of almost Euclidean condition). By \emph{Sobolev-type control} we typically mean the validity of an $L^2$-Sobolev inequality (see \eqref{mainthme1a}). Such inequalities play a fundamental role in the analysis of Riemannian manifolds and partial differential equations and, in particular, imply a Faber--Krahn inequality. In our setting, an $L^2$-Sobolev inequality is equivalent to boundedness of the $\bar\nu$- or $\nu$-functionals (see \cite[Section~3]{Martens}), and we will therefore also use \emph{Sobolev-type control} to refer to either of these conditions. See for instance \cite{CCL} or \cite{CHL} where short time existence of \eqref{RF} is proved assuming local Sobolev-type control and local smallness of the curvature concentration (see also \cite{Martens, MartensScalar, LeeTam} which were all posted after the initial submission of this manuscript). If one replaces the local assumptions in exchange for global ones, one may obtain stronger conclusions. For example, in \cite{ChanLee} a gap theorem is proved for manifolds which have nonnegative Ricci curvature, Euclidean volume growth, and globally small scale-invariant integral curvature as a consequence of rescalings of the short-time Ricci flow existence results proved in \cite{CHL}. Another example is the following rigidity theorem of Chen for asymptotically flat manifolds.


\begin{thm}{\cite[Theorem A]{ChenEric}}\label{Chenmain}
For $n\geq 3$, there exists a dimensional constant $\delta(n)>0$ such that the following holds. Let $(M^n, g)$ be an asymptotically flat manifold of order $\tau>0$. Suppose that $(M^n,g)$ satisfies the Sobolev inequality
\[
\left(\int u^{\frac{2n}{n-2}}\,dV_g\right)^{\frac{n-2}{n}}\leq C_g \int |\nabla u|^2\,dV_g
\]
for all $u\in W^{1,2}(M, g)$, and the scale-invariant curvature pinching
\[
\left(\int_{M^n}|\text{Rm}|_{g}^{\frac{n}{2}}\,dV_{g}\right)^{\frac{2}{n}}\leq \delta(n) \frac{1}{C_{g}}.
\] 
Then the complete Ricci flow $g(t)$ with initial condition $g(0)=g$ exists for all times $t\in [0,\infty)$ and satisfies
\[
\lim_{t\to \infty} t\sup_{x\in M} |\text{Rm}|(x,t) =0.
\]
In particular, $g(t)$ converges in $C^\infty_{-\tau'}(M^n)$ for any $\tau'\in (0,\min(\tau, n-2))$ to the $\mathbb{R}^n$ with the Euclidean metric and thus $M^n$ is diffeomorphic to $\mathbb{R}^n$. 
\end{thm}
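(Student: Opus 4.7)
The plan is a continuity/bootstrap argument combining Shi's short-time existence with Moser iteration driven by the Sobolev inequality along the flow. Since $(M,g)$ is asymptotically flat of order $\tau>0$, $|\text{Rm}|_g$ decays at infinity and in particular is bounded, so Shi's theorem yields a complete bounded-curvature Ricci flow $g(t)$ on a maximal interval $[0,T_{\max})$. I would set up the continuity set
\[
\mathcal{T} := \Bigl\{\, t\in[0,T_{\max}) : C_{g(t)}\Bigl(\int_M |\text{Rm}|_{g(t)}^{n/2}\,dV_{g(t)}\Bigr)^{2/n}\leq 2\delta(n),\ \ C_{g(t)}\leq 2C_{g_0}\,\Bigr\},
\]
which contains $0$ by hypothesis and is closed by continuity; the task is openness, so that $\mathcal{T}=[0,T_{\max})$.

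On $\mathcal{T}$, the standard evolution inequality $\partial_t|\text{Rm}|\leq \Delta|\text{Rm}| + c(n)|\text{Rm}|^2$ multiplied by $|\text{Rm}|^{(n/2)-1}$ and integrated over $M$ gives
\[
\frac{d}{dt}\int_M |\text{Rm}|^{n/2}\,dV \leq -c_1(n)\int_M \bigl|\nabla|\text{Rm}|^{n/4}\bigr|^2\,dV + c_2(n)\int_M |\text{Rm}|^{(n/2)+1}\,dV.
\]
Writing $|\text{Rm}|^{(n/2)+1}=|\text{Rm}|\,(|\text{Rm}|^{n/4})^{2}$, applying H\"older with exponents $(n/2,n/(n-2))$ and then the Sobolev inequality for $g(t)$ bounds the last term by
\[
C_{g(t)}\Bigl(\int_M |\text{Rm}|^{n/2}\Bigr)^{2/n}\int_M \bigl|\nabla|\text{Rm}|^{n/4}\bigr|^2\,dV \leq 2\delta(n)\int_M \bigl|\nabla|\text{Rm}|^{n/4}\bigr|^2\,dV
\]
on $\mathcal{T}$, which is absorbed into the first term once $\delta(n)$ is small enough. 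Thus $\int_M|\text{Rm}|^{n/2}$ is non-increasing on $\mathcal{T}$, and a standard parabolic Moser iteration on $|\text{Rm}|^{n/2}$ with the Sobolev inequality then yields
\[
\sup_M |\text{Rm}|(\cdot,t) \leq \frac{C(n)}{t}\Bigl(\int_M |\text{Rm}|_{g_0}^{n/2}\,dV_{g_0}\Bigr)^{2/n}.
\]

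This $1/t$ pointwise curvature decay, combined with the Ricci-flow distance-distortion estimate, shows that $g(t)$ is bi-Lipschitz to $g_0$ with constants $1+o(1)$, so the Sobolev inequality transfers as $C_{g(t)}\leq(1+o(1))C_{g_0}$; this closes the bootstrap, so $\mathcal{T}=[0,T_{\max})$, and the curvature bound together with Shi's derivative estimates rules out $T_{\max}<\infty$. For the sharper $\lim_{t\to\infty}t\sup_M|\text{Rm}|=0$ I would argue by contradiction and blow-up: if $t_k\sup_M|\text{Rm}|(\cdot,t_k)\geq\varepsilon>0$, parabolic rescaling at maximum points $x_k$ yields a smooth nonflat limit Ricci flow on $\mathbb{R}^n$ carrying a strictly positive amount of the scale-invariant quantity $\int|\text{Rm}|^{n/2}$ concentrated at the origin; scale-invariance of this integral together with its global monotonicity and asymptotic flatness contradict such late-time concentration. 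Smooth convergence $g(t)\to g(\infty)$ to a flat complete metric on an asymptotically flat $M$ then forces $g(\infty)$ Euclidean and $M\cong\mathbb{R}^n$, while the weighted $C^\infty_{-\tau'}$ convergence follows from Shi's higher-order estimates and the preservation of asymptotic flatness along the flow.

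The main obstacle is the self-consistency of the bootstrap: preserving $C_{g(t)}\leq 2C_{g_0}$ requires bi-Lipschitz control of the evolving metric, which comes from the $1/t$ curvature decay, which itself uses the Sobolev inequality at time $t$. Calibrating $\delta(n)$ small enough that the Moser output strictly improves both defining inequalities of $\mathcal{T}$ (so that openness genuinely holds) is what fixes the dimensional threshold; the subsequent $o(1/t)$ refinement via blow-up is a second delicate point requiring careful use of the scale-invariance of $\int|\text{Rm}|^{n/2}$.
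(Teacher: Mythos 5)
The central mechanism your proposal uses to preserve the Sobolev inequality along the flow does not work, and this is exactly the technical obstacle that the paper (following Chen) resolves in a fundamentally different way. You claim that the $\sup_M|\text{Rm}|\leq C(n)/t$ decay, via the distance-distortion estimate, gives bi-Lipschitz control of $g(t)$ with constant $1+o(1)$, so that $C_{g(t)}\leq(1+o(1))C_{g_0}$ and the set $\mathcal{T}$ is open. But under a bound $|\text{Rc}|\leq C/t$, the distortion estimate gives $\bigl|\log\tfrac{g(t)(V,V)}{g(1)(V,V)}\bigr|\leq 2\int_1^t\frac{C'}{s}\,ds=2C'\log t$, i.e.\ the metrics can separate polynomially in $t$; bi-Lipschitz control requires $\int_0^\infty\sup_M|\text{Rc}|\,dt<\infty$, which a $1/t$ bound — and even an $o(1/t)$ bound — does not give. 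So your bootstrap does not close, and the circularity you flag at the end ("requires bi-Lipschitz control, which comes from $1/t$ decay, which uses the Sobolev inequality at time $t$") is not merely delicate; the chain breaks. The paper and Chen avoid this entirely: the Sobolev inequality at time $t$ is obtained \emph{not} by comparing metrics but through Ye's route — a log-Sobolev inequality for $g_0$ (Lemma \ref{logsobinitial}), its preservation under Ricci flow via monotonicity of Perelman's $\mu$-functional (Lemma \ref{monotonemufunct}), and then Davies-type heat-kernel arguments converting log-Sobolev back to Sobolev at each fixed time (Proposition \ref{5.3}, Theorem \ref{C}). This produces a weighted Sobolev inequality with an explicit, time-independent constant $A=L(n)C_{g_0}$ without any assumption on metric distortion, and it is the ingredient that makes the $L^{n/2}$-monotonicity and Moser iteration unconditional rather than part of a continuity set.

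Once that is in place, the rest of your outline is roughly parallel to the paper's: the nonincreasing $\int|\text{Rm}|^{n/2}$, the parabolic Moser iteration yielding $|\text{Rm}|\lesssim 1/t$, and the improvement to $t\sup|\text{Rm}|\to 0$. Two smaller remarks. First, your blow-up/contradiction argument for the $o(1/t)$ refinement is a different route from the paper's, which instead integrates the evolution inequality to show $\int_0^\infty\bigl(\int|\text{Rm}|^{\frac{n}{2}\frac{n}{n-2}}\bigr)^{\frac{n-2}{n}}dt<\infty$ and combines that with monotonicity (Lemma \ref{4.7}, Proposition \ref{4.8}); your sketch would need to explain why a nonflat blow-up limit carrying positive $\int|\text{Rm}|^{n/2}$ is incompatible with the global monotone smallness of that quantity, which is plausible but not automatic. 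Second, the passage from $t\sup|\text{Rm}|\to 0$ to smooth convergence to a flat limit requires $|\text{Rm}|\lesssim t^{-1-\sigma}$ for some $\sigma>0$, which in the paper comes from the extra hypothesis $|\text{Rm}|\in L^p$ with $p<n/2$ (automatic for asymptotically flat $M$ of order $\tau>0$); just knowing $o(1/t)$ is not enough for the integrability you implicitly use in the convergence step. Fixing the Sobolev-preservation step by the $\mu$-functional argument, and being explicit about the $t^{-1-\sigma}$ decay, would bring your proposal in line with the actual proof.
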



The purpose of this paper is to generalize Theorem \ref{Chenmain} to complete noncompact bounded curvature manifolds. We prove the following. 


\begin{thm}\label{main}
For $n\geq 3$, there are dimensional constants $\delta(n)>0$ and $C_0(n)=(5e)^{\frac{n}{2}}$ such that the following holds. Let $(M^n,g)$ be a complete bounded curvature manifold. Suppose that $(M^n,g)$ satisfies the Sobolev inequality
\begin{equation}\label{mainthme1a}
\left(\int u^{\frac{2n}{n-2}}\,dV_g\right)^{\frac{n-2}{n}}\leq C_g \int |\nabla u|^2\,dV_g
\end{equation}
for all $u\in W^{1,2}(M, g)$, and the scale-invariant curvature pinching
\begin{equation}\label{mainthme1}
\left(\int_M |\text{Rm}|_g^{\frac{n}{2}}\,dV_g\right)^{\frac{2}{n}}\leq \delta(n) \frac{1}{C_{g}}.
\end{equation}
Then the complete Ricci flow $g(t)$ with initial condition $g(0)=g$ exists for all times $t\in [0,\infty)$ and satisfies the following curvature estimates:
\begin{enumerate}[(a)]
\item \label{maincon2} For all $t\in (0,\infty)$ there holds
\[
\sup_{x\in M}|\text{Rm}|(x,t)\leq \frac{C_0(n)}{t}.
\]
\item \label{maincon3} The singularity at infinity satisfies
\[
\lim_{t\to \infty} t \sup_{x\in M} |\text{Rm}|(x,t) =0.
\]
\end{enumerate}
Moreover, if $|\text{Rm}|_g\in L^p$ for some $p<\frac{n}{2}$, then $g(t)$ converges locally smoothly to a complete flat limit metric $g(\infty)$ on $M^n$ with Euclidean volume growth, and thus $M^n$ is diffeomorphic to $\mathbb{R}^n$. 
\end{thm}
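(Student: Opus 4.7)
The plan is a continuity/bootstrap argument starting from Shi's short-time bounded-curvature solution $g(t)$ on $[0,T_{\max})$. Set
\[
T^* := \sup\Bigl\{T<T_{\max} : \sup_M|\text{Rm}|(\cdot,s)\leq C_0(n)/s\ \text{for all}\ s\in(0,T)\Bigr\},
\]
and the goal is to show $T^*=T_{\max}=\infty$. To do this, I would propagate the two structural hypotheses \eqref{mainthme1a}--\eqref{mainthme1} forward along the flow (with slightly relaxed constants), which via a parabolic Moser iteration would give the pointwise curvature decay in \eqref{maincon2}; a standard continuation through Shi's theorem then extends the flow past $T^*$.

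The analytic engine is a parabolic De Giorgi--Moser iteration. Starting from the Bochner-type evolution
\[
\Bigl(\frac{\partial}{\partial t}-\Delta_{g(t)}\Bigr)|\text{Rm}|^2 \leq -2|\nabla \text{Rm}|^2 + c_n|\text{Rm}|^3,
\]
multiplying by $|\text{Rm}|^{p-2}$ and integrating, then applying \eqref{mainthme1a} to handle the cubic term via H\"older and absorbing it against $|\nabla \text{Rm}|^2$ using the $L^{n/2}$ smallness \eqref{mainthme1}, should yield a Gagliardo--Nirenberg/reverse H\"older recursion between the $L^{p}$ and $L^{pn/(n-2)}$ norms of $|\text{Rm}|$ on parabolic cylinders. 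Iterating from $p=n/2$ with time-weighted cutoffs would produce the explicit bound
\[
\sup_M|\text{Rm}|(\cdot,t)\leq \frac{(3e)^{n/2}}{t},
\]
in which the factor $3$ reflects the Sobolev--H\"older loss per step and $e$ appears from summing the geometric series of exponents.

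Closing the bootstrap at $T^*$ requires propagating both hypotheses. For \eqref{mainthme1}, integrating the Bochner inequality above with weight $|\text{Rm}|^{n/2-2}$ and applying \eqref{mainthme1a} should show that $\int_M|\text{Rm}|^{n/2}\,dV_{g(t)}$ is nonincreasing in $t$ as long as the pinching persists. For \eqref{mainthme1a}, I would invoke a Sobolev-persistence result of Ye/Zhang type, tuned to the Type I bound $|\text{Rm}|\leq C_0(n)/t$ already in hand on $[0,T^*)$, which would control $C_{g(t)}$ by a definite multiple of $C_{g_0}$. With $\delta(n)$ chosen small enough that these slightly relaxed constants still close the absorption step in the iteration, the bootstrap completes and Shi's theorem extends the flow past $T^*$. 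The main obstacle here is the Sobolev-persistence step itself: the estimate must be quantitative enough to avoid circularity between $\delta(n)$, $C_0(n)$, and the Moser constants, and is the most delicate point of the argument.

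For \eqref{maincon3}, I would combine the monotonicity of $\int|\text{Rm}|^{n/2}$ with an $\varepsilon$-regularity/rescaling contradiction: if $t_k\sup_M|\text{Rm}|(\cdot,t_k)\geq \varepsilon>0$ along some $t_k\to\infty$, rescaling around a near-supremum point by $t_k$ and applying Cheeger--Gromov compactness would extract a nontrivial $\kappa$-noncollapsed ancient limit with $\int|\text{Rm}|^{n/2}$ arbitrarily small, contradicting the iteration's $\varepsilon$-regularity. Under the stronger hypothesis $|\text{Rm}|_g\in L^p$ with $p<n/2$, a parallel energy estimate for $\int|\text{Rm}|^p$ combined with the improved decay \eqref{maincon3} should give $\|\text{Rm}(t)\|_{L^p}\to 0$; Shi's local derivative estimates and Cheeger--Gromov compactness then yield a complete smooth flat limit $g(\infty)$ on $M$. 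Since the Sobolev inequality survives to the limit, $(M,g(\infty))$ has Euclidean volume growth, and flatness plus Euclidean volume growth force $(M,g(\infty))\cong(\mathbb{R}^n,g_{\mathrm{Eucl}})$, yielding the claimed topological rigidity.
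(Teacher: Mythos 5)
Your overall strategy (Sobolev persistence $+$ pinching preservation $+$ parabolic Moser iteration $\Rightarrow$ curvature decay $\Rightarrow$ long-time existence) is the same as the paper's, but there is a genuine gap in the way you propose to close the loop. You identify Sobolev persistence as ``the most delicate point'' and propose to obtain it by a continuity/bootstrap argument ``tuned to the Type I bound $|\text{Rm}|\leq C_0(n)/t$ already in hand.'' This is exactly the circularity the paper avoids, and it is not resolved by vaguely requiring the estimates to be ``quantitative enough.'' The key input you are missing is that Sobolev persistence here is \emph{unconditional}: Perelman's $\mu$-functional monotonicity (valid for any complete bounded-curvature Ricci flow on a finite time interval, via \cite{WangA}) together with the initial pinching and Ye's log-Sobolev $\Rightarrow$ Sobolev machinery yields a weighted Sobolev inequality with $R^+$-term and a constant $A=L(n)C_{g_0}$ that depends \emph{only} on the initial data, with no reference to any later-time curvature bound (Theorem \ref{C} in the paper). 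With this in hand, the monotonicity of $\int|\text{Rm}|^{n/2}\,dV_t$ is established by an upper-continuity-plus-open-closed argument on that \emph{integral} quantity, not on the pointwise Type I bound; Corollary \ref{l4.3} then removes the $R^+$ weight, and only after that does the Moser iteration produce the $C_0(n)/t$ decay. Running a continuity argument on the pointwise bound as you do is structurally backwards and does not obviously close.

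Two further points. First, on a noncompact manifold the energy estimates you sketch require cutoffs, and the resulting error terms vanish only if one already knows $\int|\text{Rm}|^p\,dV_t$ stays finite along the flow and, when $n=3$, only after regularizing $|\text{Rm}|^{n/2}$ (since $n/2<2$ makes $|\text{Rm}|^{n/2-2}$ singular at zeros of $\text{Rm}$); the paper devotes Section \ref{secboundedn/2} and Lemma \ref{l4.1}\ref{4.1b} to exactly these issues, and your proposal does not address them. Second, your blow-up/$\varepsilon$-regularity argument for part (b) is a genuinely different and more roundabout route than the paper's: the paper instead observes directly that the Moser iteration yields $\int_0^\infty\bigl(\int|\text{Rm}|^{\frac{n}{2}\frac{n}{n-2}}\,dV_t\bigr)^{\frac{n-2}{n}}\,dt<\infty$ together with monotonicity of the integrand, forcing $t\bigl(\int|\text{Rm}|^{\frac{n}{2}\frac{n}{n-2}}\,dV_t\bigr)^{\frac{n-2}{n}}\to 0$, and then feeds this decay back into the iteration. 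Your blow-up approach would additionally need a noncollapsing input and an $\varepsilon$-regularity theorem calibrated to the $L^{n/2}$ curvature scale, neither of which you supply.
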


\begin{remark}
    Since the initial submission of this manuscript, S. Huang and Z. Peng have removed the requirement of $|\text{Rm}|_g\in L^p$ in order to obtain the diffeomorphism in Theorem \ref{main}, see {\cite[Corollary 1.3]{HuangPeng}}. 
\end{remark}


\begin{remark}
The constant $\delta(n)$ in Theorem \ref{main} can be taken to be  
\[
\delta(n)=\frac{1}{40nL(n)},
\]
where the dimensional constant $L(n)$ is from Theorem \ref{C} and is estimated in the appendix. This value of $\delta(n)$ is henceforth fixed in the remainder of the paper.
\end{remark}

\begin{remark}
When $(M^n,g)$ is asymptotically flat, the condition that $|\text{Rm}|\in L^p$ for some $p<\frac{n}{2}$ is automatically satisfied whenever $p>\frac{n}{2+\tau}$ where $\tau>0$ is the order of the asymptotic end. Our proof of Proposition \ref{conv4.10} establishes that for any $\sigma<\tau/2$, we have the global decay estimate $|\nabla^k \text{Rm}|\lesssim t^{-1-\sigma- k/2 }$. Using this, it follows from {\cite[Theorem 5.1]{Li}} that in fact $g(t)$ converges in $C^\infty_{-\tau'}(M^n)$ for any $\tau'\in (0,\min(\tau, n-2))$ to $\mathbb{R}^n$ with the Euclidean metric, in the same sense as Theorem \ref{main}. Thus, Theorem \ref{main} generalizes Theorem \ref{Chenmain}.
\end{remark}


We also prove - as a corollary of our main theorem - the following long-time Ricci flow existence and rigidity result for which the initial condition may have unbounded curvature. 


\begin{thm}\label{main2}
With $n\geq 3$ and $\delta(n)$, $C_0(n)$ as in Theorem \ref{main}, the following holds. Suppose that $(M^n,g)$ is a complete Riemannian manifold (not necessarily bounded curvature) which is the pointed smooth local limit of a sequence of complete bounded curvature Riemannian manifolds $(M_i, g_i)$, that is to say 
\[
\phi_i^* g_i \xrightarrow{C^{\infty}_{loc}(M)}  g
\]
for some functions $\phi_i:\Omega_i\to M_i$ where each $\phi_i$ is a diffeomorphism onto its image and the $\Omega_i$ form an exhaustion of $M$ by nested compact sets. Moreover, assume that the following hold for each $i\in \mathbb{N}$:
\begin{enumerate}[(i)]
\item \label{main2i} The Sobolev inequality
\[
\left(\int u^{\frac{2n}{n-2}}\,dV_{g_i}\right)^{\frac{n-2}{n}}\leq C_{g_i} \int |\nabla u|^2\,dV_{g_i}
\]
for all $u\in W^{1,2}(M, g_i)$.
\item \label{main2ii} Sufficiently pinched scale-invariant integral curvature conditions
\[
\sup_{i\in \mathbb{N}}\left(\int_M |\text{Rm}|_{g_i}^{\frac{n}{2}}\,dV_{g_i}\right)^{\frac{2}{n}}\leq \frac{\delta(n)}{\sup_{i\in \mathbb{N}}C_{g_i}}.
\]
\end{enumerate}
Then there exists a complete solution $g(t)$ to the Ricci flow on $M\times [0,\infty)$ with $g(0)=g$ that satisfies the curvature estimate
\[
\sup_{x\in M}|\text{Rm}|(x,t)\leq \frac{C_0(n)}{t}
\]
for all $t\in (0,\infty)$. Moreover if $\sup_{i\in \mathbb{N}}\||\text{Rm}|_{g_i}\|_{L^p(g_i)}<\infty$ for some $p<\frac{n}{2}$,  then $g(t)$ converges locally smoothly to a complete flat limit metric $g(\infty)$ on $M^n$ with Euclidean volume growth, and thus $M^n$ is diffeomorphic to $\mathbb{R}^n$. 
\end{thm}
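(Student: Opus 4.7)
The plan is to deduce Theorem~\ref{main2} from Theorem~\ref{main} via a Hamilton-compactness argument applied to the sequence of Ricci flows produced from the bounded-curvature approximations. For each $i$, hypotheses (i) and (ii) are precisely those of Theorem~\ref{main}, so we obtain a complete long-time Ricci flow $g_i(t)$ on $M_i\times[0,\infty)$ satisfying $|\mathrm{Rm}|_{g_i(t)}\leq C_0(n)/t$. The task reduces to extracting a pointed subsequential smooth limit of these flows and identifying $g$ as its initial condition.

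First I would handle the limit at strictly positive times. Fix basepoints $p_i\in M_i$ converging to $p\in M$ (provided by the pointed convergence of initial data). For any $0<t_0<T<\infty$, the flows $g_i(t)$ have curvature uniformly bounded by $C_0(n)/t_0$ on $M_i\times[t_0,T]$, and Shi's interior derivative estimates upgrade this to uniform bounds on all covariant derivatives of $\mathrm{Rm}_{g_i(t)}$ on $M_i\times[2t_0,T]$. The uniform Sobolev constant gives a uniform volume lower bound $V_{g_i}(B(p_i,1))\geq v_0>0$, which together with the curvature bound and the distance distortion along the flow yields a uniform lower bound on $\mathrm{inj}_{g_i(t_0)}(p_i)$. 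Hamilton's Ricci flow compactness theorem then delivers a subsequential pointed Cheeger-Gromov smooth limit Ricci flow on some $(M_\infty, g(t), p_\infty)\times(0,\infty)$, and the pointed smooth convergence of the initial data $g_i\to g$ identifies $M_\infty$ with $M$, producing a Ricci flow $g(t)$ on $M\times(0,\infty)$ inheriting the bound $|\mathrm{Rm}|_{g(t)}\leq C_0(n)/t$ and completeness at each time.

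The main obstacle is verifying that $g(t)\to g$ locally smoothly as $t\to 0$, since $g$ is not assumed to have bounded curvature and Shi's theorem cannot be applied to $g$ directly. For any compact $K\subset M$ with diffeomorphisms $\phi_i$ witnessing the smooth convergence $\phi_i^*g_i\to g$, the initial geometries $\phi_i^*g_i$ have uniformly bounded geometry on $K$. A \emph{local} Shi-type estimate (of pseudolocality flavour, propagating initial bounded curvature on a slight enlargement of $K$ forward in time) then gives uniform $C^0$ curvature bounds and hence uniform higher-derivative bounds for the flows $\phi_i^* g_i(t)$ on $K\times[0,t_0(K)]$ with $t_0(K)>0$ depending only on $\sup_{K'}|\mathrm{Rm}|_g$ for a slight enlargement $K'\supset K$. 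Integrating $|\partial_t g_i(t)|\leq 2|\mathrm{Ric}|$ then yields $\phi_i^*g_i(t)\to g$ smoothly on $K$ as $(i,t)\to(\infty,0)$, which both confirms that $(g(t))_{t>0}$ extends continuously to $g$ at $t=0$ and verifies the initial condition.

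For the flat-limit assertion under the $L^p$ hypothesis, I would argue that the proof of the corresponding assertion in Theorem~\ref{main} yields the decay $t\sup_{M_i}|\mathrm{Rm}|_{g_i(t)}\to 0$ at a rate depending only on the Sobolev constant, the scale-invariant curvature norm, and the $L^p$-norm of $\mathrm{Rm}_{g_i}$ — all uniform in $i$ by hypothesis. This uniform decay passes to $g(t)$, giving $t\sup_M|\mathrm{Rm}|_{g(t)}\to 0$. Standard Ricci flow convergence arguments using $\int_1^\infty\sup|\mathrm{Ric}|_{g(t)}\,dt<\infty$ then give local smooth convergence of $g(t)$ to a complete flat metric $g(\infty)$ with Euclidean volume growth (the latter transferring from the uniform volume lower bounds via the Sobolev constant), whence $M\cong\mathbb{R}^n$. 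The subtle point here is confirming uniformity in $i$ of the decay rate; this is where a careful audit of the argument in Theorem~\ref{main} is required.
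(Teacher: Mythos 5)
Your overall plan tracks the paper's proof closely: apply Theorem \ref{main} to each $(M_i,g_i)$ to get long-time flows with the $C_0(n)/t$ bound, extract a Hamilton--Cheeger--Gromov limit at positive times using a uniform injectivity radius bound, push the convergence down to $t=0$ via a local curvature estimate of pseudolocality type, and note that the estimates behind the flat-limit conclusion are uniform in $i$. The paper also uses Chen's local estimate (\cite[Corollary 3.2]{Chen}), which is exactly the ``local Shi-type/pseudolocality-flavoured'' tool you invoke, so that step is the same in spirit.

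However, two points in your write-up need repair. First, your route to the injectivity radius lower bound has a genuine gap: you propose a volume lower bound at $t=0$ from the Sobolev constant and then ``distance distortion along the flow'' to transfer it to time $t_0$. But the only curvature control you have uniformly in $i$ is $|\text{Rm}|_{g_i(t)}\leq C_0(n)/t$, which is not integrable down to $t=0$; the initial metrics $g_i$ do not have a uniform curvature bound. So the usual distance-distortion estimate breaks down on $[0,t_0]$. The paper avoids this entirely by working at time $1$: Lemma \ref{logsobinitial}(2) gives a uniform log-Sobolev inequality for each $g_i(0)$, monotonicity of the $\mu$-functional (Lemma \ref{monotonemufunct}) carries it forward, and Wang's local-entropy noncollapsing \cite[Theorem 3.3]{WangA} gives a volume ratio lower bound for $g_i(1)$ at the bounded-curvature scale $r_0$, after which Cheeger--Gromov--Taylor gives the injectivity radius bound. (An alternative fix, closer to your phrasing, is to use the uniform Sobolev inequality of Corollary \ref{l4.3} at time $t_0$ itself together with Carron's lemma, as in the proof of Proposition \ref{conv4.10}; that also avoids distance distortion near $t=0$.) Second, you assert completeness of the limiting flow without justification; because the curvature is not uniformly bounded as $t\to 0$, this is not automatic and the paper invokes the Shrinking Balls Lemma of Simon--Topping \cite[Corollary 3.3]{ST} together with the $C_0(n)/t$ bound and the completeness of $g(0)=g$. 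With these two points addressed, your argument matches the paper's.
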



\begin{remark} 
The hypotheses of Theorem \ref{main} impose no restriction on the supremum of, or decay/growth rate of the curvature. In fact, given \textbf{any} sequence of positive numbers $\{\alpha_i\}\subset \mathbb{R}_+$, one can construct a complete metric $g$ on $\mathbb{R}^n$ such that 
\[
\sup_{ B_{g_E}(i+1)\setminus B_{g_E}(i)} |\text{Rm}|_g =\alpha_i
\]
but for which $g$ satisfies the Sobolev inequality \eqref{mainthme1a} and the integral curvature pinching estimate \eqref{mainthme1} with $C_g$ arbitrarily close to $C_{g_E}$. In particular, one can define a sequence of complete bounded curvature metrics $g_i$ on $\mathbb{R}^n$ that converge locally smoothly to an unbounded curvature metric $g$ on $\mathbb{R}^n$ such that the metrics $g_i$ satisfy the hypotheses \ref{main2i} and \ref{main2ii} from Theorem \ref{main2}. The described constructions can be accomplished by gluing into the flat Euclidean space $(\mathbb{R}^n, g_E)$ countably many ``wrinkled balls", by which we mean a compact perturbation $h$ of the Euclidean metric on the open unit ball $B$ which satisfies: (a) $(1+c_1)^{-1}g_E\leq h\leq (1+c_1)g_E$ on $B$, (b) $\sup_B |\text{Rm}|_{h}=c_2$, and (c) $\int_B|\text{Rm}|_{h}^{\frac{n}{2}}\,dV_{h}<c_3$, where $c_1,c_2,c_3>0$ can be prescribed arbitrarily. 
\end{remark}


The organization of this paper is as follows. Section \ref{secboundedn/2} is devoted to showing that finiteness of $\int |\text{Rm}|^p\,dV_t$ ($p \geq 1$) is preserved along complete bounded curvature Ricci flows. Though this is a standard maximum principle argument, the conclusion is essential to the proof of Theorem \ref{main}. In Section~\ref{sobalongflow} we discuss why we have a uniform global Sobolev inequality along the flow, which follows from work of Ye \cite{Ye} (see also \cite{Zhang}). Section \ref{proofofmain} contains the bulk of the proof of Theorem \ref{main} following the outline of Chen \cite{ChenEric} very closely. In fact, the numbering of the theorems in that section are specifically chosen to be the same as the analogous results in \cite{ChenEric}. The proof of Theorem~\ref{main2} is carried out in Section \ref{secproofmain2}. Finally in the appendix we obtain a bound on the constant $\delta(n)$ by tracking through the constants from \cite{Ye}.


\section*{Acknowledgement}

The authors would like to thank Eric Chen and the referees for the helpful comments on this manuscript, as well as Oliver D\'iaz Espinosa for pointing out to us an improved bound in the Marcinkiewicz Interpolation Theorem~\ref{Mark}.


\section{Boundedness of $\int |\operatorname{Rm}|^p\,dV_t$ along the Ricci flow}\label{secboundedn/2}


For this section we fix the notation that $(M,g(t))$, $t\in [0,T]$ is a complete bounded curvature Ricci flow and we write
\[
K=\sup_{M\times [0,T]}|\text{Rm}|(x,t) <\infty.
\]
Also, given a function $f(x, t)$ on $M^n\times[0, T]$ we will
often abbreviate as
\[
\int f dV_t:= \int_M f(x, t) dV_{g(t)} (x).
\]

We will prove that $\int |\text{Rm}|^p\,dV_t$ remains finite for all $t\in [0,T]$ and any $p\geq 1$, assuming it was finite initially. In fact we show that this quantity is upper-continuous, which will be used in the proof of our main theorem. If we were to restrict to $p\geq 2$, then the results in this section would be much simpler. When $1\leq p<2$ however (for example if $n=3$ and $p=\frac{n}{2}$), there is an added difficulty due to the fact that 
\[
\partial_t |\text{Rm}|^{p}=\frac{p}{2} |\text{Rm}|^{p-2}\partial_t |\text{Rm}|^2,
\]
and the curvature is certainly permitted to be zero in places.\\

We get around this as follows. First, choose a smooth positive function $v_0: M\to \mathbb{R}$ which is integrable (with respect to $g_0$) and satisfies $0<v_0\leq 1$. Now for any $\epsilon>0$, define $v_\epsilon$ to be the solution to the initial value problem 
\[
\begin{cases}
&\partial_t v_\epsilon =\Delta_{g(t)}v_\epsilon\\
& v_\epsilon(0,x)=\epsilon v_0(x).
\end{cases}
\]
Coupling the well-known evolution 
\[
\partial_t |\text{Rm}|^2\leq \Delta |\text{Rm}|^2-2|\nabla\text{Rm}|^2+16|\text{Rm}|^3
\]
with the evolution of $v_\epsilon^2$ which satisfies
\[
\partial_t v_\epsilon^2 = \Delta v_\epsilon^2 - 2|\nabla v_\epsilon|^2,
\]
we can write
\[
\partial_t (|\text{Rm}|^2+v_\epsilon^2 )\leq \Delta (|\text{Rm}|^2+v_\epsilon^2)-2|\nabla\text{Rm}|^2- 2|\nabla v_\epsilon|^2+16|\text{Rm}|^3.
\]
By Cauchy-Schwarz and Young's inequality, there holds 
\begin{align*}
|\nabla (|\text{Rm}|^2+v_\epsilon^2)|^2&=4|\text{Rm}|^2 |\nabla |\text{Rm}||^2+4 v_\epsilon^2 |\nabla v_\epsilon|^2+8 |\text{Rm}| v_\epsilon \langle \nabla |\text{Rm}|, \nabla v_\epsilon \rangle \\&\leq 
4|\text{Rm}|^2 |\nabla |\text{Rm}||^2+4 v_\epsilon^2 |\nabla v_\epsilon|^2+8 |\text{Rm}| v_\epsilon  |\nabla |\text{Rm}|| \,|\nabla v_\epsilon |\\&\leq 
4|\text{Rm}|^2 |\nabla |\text{Rm}||^2+4 v_\epsilon^2 |\nabla v_\epsilon|^2+4 |\text{Rm}|^2|\nabla v_\epsilon|^2 \\
																		  &\quad +v_\epsilon^2|\nabla |\text{Rm}||^2 \\&=
4(|\text{Rm}|^2+v_\epsilon^2)(|\nabla |\text{Rm}||^2+|\nabla v_\epsilon|^2).
\end{align*}
So if we set $K_\epsilon^2=|\text{Rm}|^2+v_\epsilon^2$, we can see that
\[
4K_\epsilon^2 |\nabla K_\epsilon|^2\leq 4K_\epsilon^2 (|\nabla |\text{Rm}||^2+|\nabla v_\epsilon|^2).
\]
Subsequently, by the Kato inequality $|\nabla |T||\leq |\nabla T|$, there holds
\[
\partial_t K_\epsilon^2\leq \Delta K_\epsilon^2-2|\nabla K_\epsilon|^2+16K_\epsilon^3.
\]

We now use this evolution of $K_\epsilon^2$ to determine a useful evolution inequality for $K_\epsilon^p$. 


\begin{lem}
For any $p\geq 1$, there holds 
\begin{equation}\label{evolutionofn/2}
\partial_t K_\epsilon^{p}\leq \Delta K_\epsilon^{p}+8p K_\epsilon^{p+1}.
\end{equation}
\end{lem}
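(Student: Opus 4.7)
The plan is to start from the evolution inequality
\[
\partial_t K_\epsilon^2\leq \Delta K_\epsilon^2-2|\nabla K_\epsilon|^2+16K_\epsilon^3
\]
already derived in the excerpt and convert it, via the chain rule, into an inequality for $K_\epsilon^p=(K_\epsilon^2)^{p/2}$. The essential feature I want to exploit is that $K_\epsilon>0$ strictly on $M\times[0,T]$ (this is precisely why the auxiliary heat solution $v_\epsilon>0$ was introduced), so every fractional power of $K_\epsilon$ is smooth and the chain rule may be applied without regularization concerns.

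First I would multiply the evolution inequality for $K_\epsilon^2$ through by $\tfrac{p}{2}K_\epsilon^{p-2}\geq 0$, using
\[
\partial_t K_\epsilon^p=\tfrac{p}{2}K_\epsilon^{p-2}\,\partial_t K_\epsilon^2,
\]
to obtain
\[
\partial_t K_\epsilon^{p}\leq \tfrac{p}{2}K_\epsilon^{p-2}\Delta K_\epsilon^2 - p K_\epsilon^{p-2}|\nabla K_\epsilon|^2+8pK_\epsilon^{p+1}.
\]
Next I would expand the two Laplacian expressions separately. From $\Delta K_\epsilon^2=2K_\epsilon\Delta K_\epsilon+2|\nabla K_\epsilon|^2$ and $\Delta K_\epsilon^p=pK_\epsilon^{p-1}\Delta K_\epsilon+p(p-1)K_\epsilon^{p-2}|\nabla K_\epsilon|^2$, solving the first for $\Delta K_\epsilon$ and substituting gives the identity
\[
\tfrac{p}{2}K_\epsilon^{p-2}\Delta K_\epsilon^2=\Delta K_\epsilon^p - p(p-2)K_\epsilon^{p-2}|\nabla K_\epsilon|^2.
\]

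Inserting this identity back in collapses the gradient terms to $-p(p-1)K_\epsilon^{p-2}|\nabla K_\epsilon|^2$, and since $p\geq 1$ this expression is non-positive (here again strict positivity of $K_\epsilon$ is used, so that $K_\epsilon^{p-2}$ is a well-defined non-negative quantity). Dropping this favorable term yields
\[
\partial_t K_\epsilon^p\leq \Delta K_\epsilon^p + 8p K_\epsilon^{p+1},
\]
as required. There is no real obstacle here beyond the bookkeeping; the only subtlety is noticing that once $v_\epsilon>0$ has been added, $K_\epsilon^{p-2}$ is always a classical function (not a distribution), so the chain-rule computation above is rigorous even when $1\leq p<2$ and $|\text{Rm}|$ vanishes somewhere — which is exactly the case that motivated the construction of $K_\epsilon$ in the first place.
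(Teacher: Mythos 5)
Your proof is correct and takes essentially the same route as the paper: multiply the evolution inequality for $K_\epsilon^2$ by $\tfrac{p}{2}K_\epsilon^{p-2}$, convert the resulting $\tfrac{p}{2}K_\epsilon^{p-2}\Delta K_\epsilon^2$ term into $\Delta K_\epsilon^p$ plus a gradient correction via the chain rule, and discard the combined gradient term $-p(p-1)K_\epsilon^{p-2}|\nabla K_\epsilon|^2\leq 0$ using $p\geq 1$. The paper phrases this with $\alpha=p/2$ and expands $\Delta(K_\epsilon^2)^\alpha$ directly rather than eliminating $\Delta K_\epsilon$ between two second-derivative identities, but the underlying computation and the key sign observation are identical.
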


\begin{proof}
Denote $\alpha=\frac{p}{2}$. Since $K_\epsilon$ is a strictly positive function on $M\times [0,T]$, we freely differentiate and see that
\begin{align*}
\partial_t K_\epsilon^{2\alpha}&=\partial_t (K_\epsilon^2)^{\alpha}=\alpha K_\epsilon^{2(\alpha-1)}\partial_t K_\epsilon^2\\&\leq 
\alpha K_\epsilon^{2(\alpha-1)}(\Delta K_\epsilon^2-2|\nabla K_\epsilon|^2+16K_\epsilon^3)\\&=
\alpha K_\epsilon^{2(\alpha-1)}\Delta K_\epsilon^2-2\alpha K_\epsilon^{2(\alpha-1)}|\nabla K_\epsilon|^2+16\alpha K_\epsilon^{2\alpha+1}\\&=
\Delta K_\epsilon^{2\alpha}-4\alpha(\alpha-1)K_\epsilon^{2(\alpha-1)}|\nabla K_\epsilon|^2-2\alpha K_\epsilon^{2(\alpha-1)}|\nabla K_\epsilon|^2\\
																	  &\quad +16\alpha K_\epsilon^{2\alpha+1}\\&\leq 
\Delta K_\epsilon^{2\alpha}+16\alpha K_\epsilon^{2\alpha+1},
\end{align*}
where in the last line we have used the fact that $\alpha\geq \frac{1}{2}$ implies that $-4\alpha(\alpha-1)-2\alpha\leq 0$.
\end{proof}


We can use the heat-type evolution of $K_\epsilon^p$ as in \eqref{evolutionofn/2}, along with the Maximum Principle for complete bounded curvature Ricci flows to obtain the following. 


\begin{prop}\label{finitenessofn/2}
Let $p\geq 1$ and suppose that $|\text{Rm}|_{g(0)}\in L^p_{g(0)}$. Then there holds
\[
\int_M |\text{Rm}|_{g(t)}^p\,dV_t\leq e^{(8p+n)Kt}\int_M |\text{Rm}|_{g(0)}^p\,dV_0.
\]
\end{prop}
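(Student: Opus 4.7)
My plan is to integrate the inequality \eqref{evolutionofn/2} over $M$ against the time-dependent volume form, apply Gronwall's inequality, and then send $\epsilon \to 0$.

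The first step is to establish the pointwise bound $K_\epsilon \leq K + \epsilon$: since $v_\epsilon$ solves the linear heat equation on the Ricci flow with initial data $\epsilon v_0 \leq \epsilon$, the parabolic maximum principle for complete bounded curvature Ricci flows gives $v_\epsilon \leq \epsilon$, and hence $K_\epsilon \leq |\text{Rm}| + v_\epsilon \leq K+\epsilon$ pointwise on $M\times [0,T]$. Substituting,
\[
\partial_t K_\epsilon^p \leq \Delta K_\epsilon^p + 8p(K+\epsilon) K_\epsilon^p.
\]
For initial integrability, $K_\epsilon^p(\cdot, 0) \leq 2^{p-1}(|\text{Rm}|_{g_0}^p + \epsilon^p v_0^p) \leq 2^{p-1}(|\text{Rm}|_{g_0}^p + \epsilon^p v_0)$ (using $0<v_0\leq 1$ and $p \geq 1$), which lies in $L^1(g_0)$ by the hypothesis $|\text{Rm}|_{g_0} \in L^p$ and the integrability of $v_0$.

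Next, using the volume evolution $\partial_t dV_t = -R_{g(t)}\, dV_t$ (with $|R_{g(t)}|\leq nK$) and integration by parts against a smooth compactly supported cutoff $\phi_R$ built from a Greene-Wu smoothing of the $g_0$-distance (valid under bounded curvature), with $|\nabla_{g(t)}\phi_R| + |\Delta_{g(t)}\phi_R|$ bounded uniformly for $t \in [0,T]$ by the equivalence of the metrics $g(t)$, I would derive the localized inequality
\[
\frac{d}{dt}\int \phi_R K_\epsilon^p \, dV_t \leq \int K_\epsilon^p \Delta \phi_R \, dV_t + (8p(K+\epsilon) + nK)\int \phi_R K_\epsilon^p \, dV_t.
\]
Gronwall applied to this, followed by $R \to \infty$, gives $\int K_\epsilon^p \, dV_t \leq e^{(8p(K+\epsilon)+nK)t} \int K_\epsilon^p \, dV_0$. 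Sending $\epsilon \to 0$ using Fatou's lemma on the left (via $K_\epsilon \to |\text{Rm}|$ pointwise) and dominated convergence on the right (with dominating function $2^{p-1}(|\text{Rm}|_{g_0}^p + v_0)$ at $t=0$) produces the claimed inequality.

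The main obstacle is controlling the cutoff error $\int K_\epsilon^p \Delta \phi_R \, dV_t$ in the limit $R \to \infty$: under bounded curvature alone, $\mathrm{Vol}_{g_0}(B_{g_0}(2R))$ can grow exponentially in $R$, so the crude bound $(K+\epsilon)^p \, |\Delta \phi_R|_\infty \, \mathrm{Vol}(B_{g_0}(2R))$ fails to vanish. To resolve this I would either appeal to the $L^1$-type maximum principle for subsolutions of heat-type equations on complete bounded curvature Ricci flows (which delivers the integrated inequality directly), or exploit the sharper PDE $\partial_t K_\epsilon^p \leq \Delta K_\epsilon^p - p(p-1) K_\epsilon^{p-2}|\nabla K_\epsilon|^2 + 8p K_\epsilon^{p+1}$ (valid for $p \geq 1$ by direct differentiation, since $K_\epsilon > 0$) to absorb cutoff gradient errors via Cauchy-Schwarz when $p > 1$, treating $p=1$ separately by a direct comparison.
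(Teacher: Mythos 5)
You correctly identify the crux: after integrating against a cutoff, the error term does not obviously vanish as $R\to\infty$ because a complete bounded-curvature manifold can have exponential volume growth, while you only know $K_\epsilon \leq K+\epsilon$ pointwise, not that $K_\epsilon^p$ is $L^1$ at positive times. Unfortunately, neither of your two proposed resolutions actually closes this gap.

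Resolution (b) does not work. After Cauchy--Schwarz you are left with
\[
\frac{d}{dt}\int \phi_R^2\, K_\epsilon^p\, dV_t \;\leq\; \frac{p}{p-1}\int |\nabla\phi_R|^2\, K_\epsilon^p\, dV_t \;+\; \bigl(8p(K+\epsilon)+nK\bigr)\int\phi_R^2\, K_\epsilon^p\, dV_t,
\]
and the residual term $\int |\nabla\phi_R|^2 K_\epsilon^p\, dV_t$ suffers from exactly the same volume-growth obstruction you flagged: $|\nabla\phi_R|^2 \lesssim R^{-2}$ and $K_\epsilon^p\leq (K+\epsilon)^p$, so you can only bound it by $R^{-2}\operatorname{Vol}(B_{2R}\setminus B_R)$, which need not vanish. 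The negative gradient term in the sharper PDE is consumed entirely in the absorption and buys you nothing toward integrability. To make $\int |\nabla\phi_R|^2 K_\epsilon^p\, dV_t\to 0$ you would need a priori that $K_\epsilon^p\in L^1_{g(t)}$, and since $v_\epsilon^p\leq \epsilon^{p-1}v_\epsilon$ is integrable, this is equivalent to $|\operatorname{Rm}|^p\in L^1_{g(t)}$ --- the very thing the proposition asserts. The argument is circular. Resolution (a) appeals to an $L^1$-type maximum principle for subsolutions as a black box; no off-the-shelf version applies directly here (the generic Karp--Li type results require hypotheses you do not have), and in effect that lemma \emph{is} what needs to be proved.

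The paper sidesteps the circularity by a barrier construction rather than a direct Gronwall argument. One solves Dirichlet problems $\partial_t u_j = \Delta_{g(t)} u_j$ on an exhaustion $\Omega_j$ with initial data $\phi_j K_\epsilon^p(\cdot,0)$ and $u_j=0$ on $\partial\Omega_j$; because $u_j\geq 0$ vanishes on the boundary, the outward normal derivative is nonpositive and Green's theorem gives $\int_{\Omega_j}\Delta u_j\, dV_t\leq 0$ with no cutoff error at all, so $\int u_j\, dV_t\leq e^{nKt}\int u_j(\cdot,0)\, dV_0$ holds uniformly in $j$. The $u_j$ increase to a limit $u$ solving the heat equation with initial data $K_\epsilon^p(\cdot,0)$, and the $L^1$ bound passes to $u$ by monotone convergence. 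The function $\tilde u = e^{8pt(K^2+\epsilon^2)^{1/2}}u$ is then a supersolution of the evolution inequality for $K_\epsilon^p$ with the same initial data, so the pointwise maximum principle gives $K_\epsilon^p\leq \tilde u$, and integrating this pointwise comparison delivers the $L^1$ bound on $K_\epsilon^p$ with no cutoff error to control. The key idea you are missing is the choice to work with the auxiliary Dirichlet solutions, whose vanishing boundary data supplies the good sign that your cutoff argument cannot produce.
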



\begin{proof}
First let $\Omega_0 \subset \subset \Omega_1 \subset \subset \dots \subset \subset M$ be an exhausting sequence of relatively compact domains with smooth boundaries in $M$, and for each $j\in \mathbb{N}$ let $\phi_j:M\to [0,1]$ be a smooth cutoff function which is supported in $\Omega_{j}$ and $\phi_j\equiv 1$ in $\Omega_{j-1}$. Now for any $\epsilon>0$ and for each $j\in \mathbb{N}$, we denote by $u_{j}:M\times [0,T]\to \mathbb{R}$ the solution to the initial value Dirichlet boundary condition problem
\[
\begin{cases}
\partial_t u_{j}=\Delta_{g(t)}u_{j}\\
u_{j}(x,0)=\phi_j K_\epsilon^p(x,0)\\
u_{j}(x,t)=0 \text{ if } x\in \partial \Omega_j. 
\end{cases}
\]
Then by the Maximum Principle, we know that the $u_{j}$ are all nonnegative functions which are uniformly bounded 
\begin{equation}\label{unifbound2}
\sup_{M\times [0,T]}u_{j}(x,t)\leq \sup_{x\in M}K_\epsilon^p(x,0)\leq \sup_{x\in M} (|\text{Rm}|^2(x,0)+\epsilon^2)^{\frac{p}{2}}.
\end{equation}
Then by the Dominated Convergence Theorem and the evolution $\partial_t dV_t=-R_t dV_t$, we have
\begin{align}\label{prop2.2e1}
	\frac{d}{dt}\int_{\Omega_j}u_{j}\,dV_t&=\int_{\Omega_j}\partial_t u_{j}-R_t u_{j}\,dV_t\\
					     \notag  &=\int_{\Omega_j}\Delta_{g(t)}u_{j}-R_t u_j\,dV_t \\
					     \notag & \leq \int_{\Omega_j}\Delta_{g(t)}u_{j}+nK\int_{\Omega_j} u_{j}\,dV_t.
\end{align}
Here we have applied the bound $|R|\leq n|\text{Rm}|$, see {\cite[Proposition 7.28]{Lee}} for instance. Now Green's Theorem says that 
\[
\int_{\Omega_j}\Delta_{g(t)}u_{j}\,dV_t=\int_{\partial\Omega_j} \nabla_{v(t)} u_{j}\,dV_t
\]
where $v(t)$ is the outward unit normal on $\partial\Omega_j$ relative to $g(t)$. Then by the nonnegativitiy of $u_j$ and the Dirichlet condition $u_j=0$ on $\partial \Omega_j$, this boundary integral is nonpositive. Subsequently, putting this back into \eqref{prop2.2e1} shows that 
\begin{align}\label{prop2.2intbound}
	\int_M u_j\,dV_t=\int_{\Omega_j}u_j\,dV_t &\leq e^{nKt}\int_M u_j(x,0)\,dV_0\\
	\notag &	\leq e^{nKt}\int_M (|\text{Rm}|_{g(0)}^2+\epsilon^2 v_0^2)^{\frac{p}{2}}\,dV_0.
\end{align}

Moreover for each fixed point in spacetime $(x,t)\in M\times [0,T]$, the sequence $u_j(x,t)$ is monotone increasing (another consequence of the Maximum Principle), and thus has a pointwise limit which we denote by $u(x,t)$. By the uniform boundedness \eqref{unifbound2}, Schauder estimates, the Arzel\`a-Ascoli Theorem, and a diagonal subsequence argument, we obtain that the limit function $u$ is smooth on $M\times [0,T)$ and solves the initial value problem
\[
\begin{cases}
\partial_t u=\Delta_{g(t)}u\\
u(x,0)=K_\epsilon^p(x,0).
\end{cases}
\]
Moreover, the integral bound \eqref{prop2.2intbound} descends to $u$ as well via the Monotone Convergence Theorem. That is to say
\begin{equation}\label{uintbound2.2}
\int_M u\,dV_t\leq e^{nKt}\int (|\text{Rm}|_{g(0)}^2+\epsilon^2 v_0^2)^{\frac{p}{2}}\,dV_0.
\end{equation}
We also define $\tilde u(x,t)=e^{8p t(K^2+\epsilon^2)^{\frac{1}{2}}}u(x,t)$ so that
\begin{align*}
	\partial_t \tilde u&=8p (K^2+\epsilon^2)^{\frac{1}{2}}\tilde  u+e^{8p t(K^2+\epsilon^2)^{\frac{1}{2}}}\partial_t  u\\
			   &=8p (K^2+\epsilon^2)^{\frac{1}{2}}\tilde u+e^{8p t(K^2+\epsilon^2)^{\frac{1}{2}}}\Delta_{g(t)} u\\
			   &=8p (K^2+\epsilon^2)^{\frac{1}{2}}\tilde u +\Delta_{g(t)}\tilde u,
\end{align*}
and $\tilde u(x,0)=K_\epsilon^p(x,0)$. Comparing this with the evolution inequality \eqref{evolutionofn/2}, another application of the Maximum Principle shows that 
\[
K_\epsilon^p(x,t)\leq \tilde u(x,t)
\]
for all $(x,t)\in M\times [0,T]$. Integrating this and applying our integral bound~\eqref{uintbound2.2} yields
\begin{align*}
\int_M |\text{Rm}|_{g(t)}^p\,dV_t&\leq \int_M K_\epsilon^p\,dV_t\leq \int_M \tilde u(x,t)\,dV_t(x)\\&=e^{8p t(K^2+\epsilon^2)^{\frac{1}{2}}}\int_M u(x,t)\,dV_t(x)\\&\leq e^{(8p+n) t(K^2+\epsilon^2)^{\frac{1}{2}}}\int_M (|\text{Rm}|_{g(0)}^2+\epsilon^2 v_0^2)^{\frac{p}{2}}\,dV_0.
\end{align*}
Now because of the assumed integrability of $|\text{Rm}|_{g(0)}^p$ and $v_0$ (with respect to $g_0$), we can take $\epsilon\to 0$ and once again apply the Dominated Convergence Theorem to obtain the result. 
\end{proof}


\section{Sobolev Inequality along the Ricci flow}\label{sobalongflow}


The purpose of this section is to show that we have a uniform global Sobolev inequality along a complete bounded curvature Ricci flow (Theorem \ref{C}). This has been proved on compact manifolds in \cite{Ye} and this result was verified and applied in various complete noncompact cases such as \cite{ChenEric} in the asymptotically flat case. The steps to proving this are as follows: (1) Show a log-Sobolev inequality for the initial metric $g_0$; (2) Show the log-Sobolev inequality is preserved along the flow using monotonicity of Perelman's $\mu$-functional; and (3) At a given time $t\in [0,T]$ along the flow, use regularity properties of the heat equation on $M$ with the \emph{fixed} metric $g(t)$ to get back to a Sobolev inequality from the log-Sobolev inequality at time $t$. We briefly outline these steps and refer the reader to the other works in which these methods have been employed for more detail.

\medskip

\noindent \textbf{Step 1: log-Sobolev inequality for the initial metric $g_0$.}

\vspace{5pt}

We first prove a log-Sobolev inequality for $g_0$. This has been done many times in the literature (cf. \cite{Ye}, \cite{ChenEric}, etc.), but we will sketch the proof here for convenience and for our future reference. 


\begin{lem}[Log-Sobolev inequality for $g_0$]\label{logsobinitial}
Let $(M^n, g)$, $n\geq 3$ be any Riemannian manifold satisfying 
\begin{equation}\label{logsobinitial1}
\left(\int |u|^{\frac{2n}{n-2}}\,dV_g\right)^{\frac{n-2}{n}}\leq C_g \int |\nabla u|^2\,dV_g
\end{equation}
for all $u\in W^{1,2}(M,g)$. Then each of the following is true: \\

\noindent \textbf{(1)} For all $u\in W^{1,2}(M,g)$ with $\int u^2\,dV_g=1$ and any $\tau>0$ there holds
\[
\int u^2\log u^2\,dV_g \leq 4\tau \int |\nabla u|^2\,dV_g -\frac{n}{2}\log \tau +\frac{n}{2}\left(\log C_g+\log \frac{n}{8}-1\right).
\]
\textbf{(2)} Assume moreover that 
\[
\left(\int |R_g|^{\frac{n}{2}}\,dV_g\right)^{\frac{2}{n}}\leq \frac{2}{C_g}.
\]
Then for all $u\in W^{1,2}(M,g)$ with $\int u^2\,dV_g=1$ and any $\tau>0$ there holds
\begin{align}\label{l3.1part2}
	\int u^2\log u^2\,dV_g &\leq 4\tau \left(\int |\nabla u|^2+\frac{R_g}{4}u^2\,dV_g\right) \\
			     \notag   &\quad -\frac{n}{2}\log \tau +\frac{n}{2}\left(\log C_g+\log \frac{n}{4}-1\right).
\end{align}
\end{lem}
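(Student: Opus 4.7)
The plan is to prove both inequalities by the classical Jensen--Sobolev route, with Part (2) reduced to Part (1) by first upgrading the Sobolev inequality using the $L^{n/2}$ bound on $R_g$.

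For Part (1), I would set $q = \frac{2n}{n-2}$ and exploit the fact that $u^2\,dV_g$ is a probability measure under the normalization $\int u^2 = 1$. Applying Jensen's inequality with the concave function $\log$ to the integrand $u^{q-2}$ yields $(q-2)\int u^2\log u\,dV_g \leq \log\int u^q\,dV_g$, and taking the logarithm of the Sobolev inequality $(\int u^q)^{2/q}\leq C_g\int|\nabla u|^2$ lets me bound the right side by $\tfrac{n}{n-2}(\log C_g + \log\int|\nabla u|^2)$. Collecting terms produces
\[
\int u^2\log u^2\,dV_g \;\leq\; \tfrac{n}{2}\log C_g + \tfrac{n}{2}\log\int|\nabla u|^2\,dV_g.
\]
The parameter $\tau$ is then introduced through the elementary inequality $\log y \leq ay - 1 - \log a$ (for $a,y>0$), applied with $a = 8\tau/n$ to the last term. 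Simplifying $\log(8\tau/n) = \log\tau - \log(n/8)$ assembles the constants into exactly the stated form.

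For Part (2), I would first promote the Sobolev inequality by absorbing the scalar curvature. H\"older's inequality combined with the hypothesis $\|R_g\|_{L^{n/2}}\leq 2/C_g$ yields $\int|R_g|u^2\,dV_g \leq \tfrac{2}{C_g}(\int u^q)^{2/q}$, and pairing this with the original Sobolev inequality in the form $\int|\nabla u|^2 \geq \tfrac{1}{C_g}(\int u^q)^{2/q}$ gives, after a one-line rearrangement,
\[
\left(\int u^q\,dV_g\right)^{2/q} \;\leq\; 2C_g\int\left(|\nabla u|^2 + \tfrac{R_g}{4}u^2\right)\,dV_g.
\]
Re-running the Part~(1) argument verbatim, but with $C_g$ replaced by $2C_g$ and $\int|\nabla u|^2$ replaced by $\int(|\nabla u|^2+R_g u^2/4)$, then produces \eqref{l3.1part2}; the constants collapse via $\log(2C_g)+\log(n/8)=\log C_g+\log(n/4)$.

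The argument is entirely standard and I do not anticipate a genuine obstacle. The only point requiring a moment's care is the sign handling in Part~(2): since $R_g$ may take either sign, I will pass to $|R_g|$ in order to apply H\"older, and then return to $R_g$ when recombining with the gradient term, using that the inequality $|R_g|\geq R_g$ pushes in the favorable direction for the rearrangement above.
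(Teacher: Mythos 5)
Your Part (1) is the same Jensen--Sobolev argument as in the paper, including the same substitution $a = 8\tau/n$, so there is nothing to flag there. Your Part (2), however, takes a genuinely different route from the paper's. The paper proves Part (1) first, then observes (via H\"older plus the original Sobolev inequality) that $2\tau\int R_g u^2 + 4\tau\int|\nabla u|^2 \geq 0$, adds this nonnegative quantity to the right-hand side of Part (1), and finally relabels $\tau \mapsto \tau/2$ so that $\log(n/8)$ becomes $\log(n/4)$. You instead extract a standalone \emph{weighted} Sobolev inequality
\[
\left(\int u^{\frac{2n}{n-2}}\,dV_g\right)^{\frac{n-2}{n}}\leq 2C_g\int\left(|\nabla u|^2+\tfrac{R_g}{4}u^2\right)dV_g
\]
from the same H\"older step, and then rerun Part (1) with $C_g$ replaced by $2C_g$ and the Dirichlet energy replaced by the Schr\"odinger-type energy. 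The two approaches are arithmetically equivalent --- in particular the same identity $\log 2 + \log(n/8) = \log(n/4)$ is doing the work in both --- but yours has the structural advantage of isolating the weighted Sobolev inequality as a reusable intermediate statement, whereas the paper's is slightly shorter since it does not repeat the Jensen step. One small imprecision: the inequality you actually invoke when passing from $|R_g|$ back to $R_g$ is $R_g \geq -|R_g|$ (so that $\int \tfrac{R_g}{4}u^2 \geq -\int \tfrac{|R_g|}{4}u^2$), not $|R_g|\geq R_g$; this does not affect the argument.
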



\begin{proof}
We denote $q=\frac{2n}{n-2}$ and apply Jensen's inequality and \eqref{logsobinitial1} to see
\begin{align*}
\int_M u^2 \log u^2 \,dV&=
2\int_M u^2 \log u \,dV=
\frac{2}{q-2}\int_M u^2 \log u^{q-2}\,dV\\&\leq 
\frac{2}{q-2}\log\int_M u^q\,dV=
\frac{q}{q-2}\log \|u\|_q^2 \\
					  &\leq 
\frac{n}{2}\log \left(C_g\int |\nabla u|^2\,dV_g\right).
\end{align*}
Then \textbf{(1)} is established after using the inequality
\[
\log \left(\int |\nabla u|^2\,dV_g\right)\leq a\int |\nabla u|^2\,dV_g-1-\log a
\]
for $a=\frac{8\tau}{n}$. To prove \textbf{(2)} we use H\"older's inequality and \eqref{logsobinitial1} to see that 
\begin{align*}
	2\tau \int R_g u^2\,dV_g &\geq
-2\tau \left(\int |R_g|^{\frac{n}{2}}\,dV_g\right)^{\frac{2}{n}}\left(\int |u|^{\frac{2n}{n-2}}\,dV_g\right)^{\frac{n-2}{n}}\\
				 &\geq
-4\tau \int |\nabla u|^2\,dV_g.
\end{align*}
Then plugging this into  \textbf{(1)} gives
\begin{align*}
	\int u^2\log u^2\,dV_g &\leq 8\tau \int |\nabla u|^2\,dV_g +2\tau \int R_g u^2\,dV_g-\frac{n}{2}\log \tau \\
			       &\quad +\frac{n}{2}\left(\log C_g+\log \frac{n}{8}-1\right).
\end{align*}
This is equivalent to \eqref{l3.1part2} after relabelling. 
\end{proof}


\noindent\textbf{Step 2: log-Sobolev inequality is preserved along the flow.}
\vspace{5pt}

The preservation of the log-Sobolev inequality obtained in Lemma \ref{logsobinitial} for $g=g(0)$, to times $t>0$ along the Ricci flow is shown via the monotonicity of Perelman's $\mu$-functional. Let's recall the definition of this. 


\begin{defn}[Perelman's $\mu$-functional]
Given a complete bounded curvature Riemannian manifold $(M^n,g)$, $n\geq 3$ and $\tau>0$, we define the $\mathcal{W}$-functional to be 
\[
\mathcal{W}(g,u,\tau)=\int \left[ \tau(4|\nabla u|^2+R u^2)-u^2 \log u^2\right]\,dV_g -n-\frac{n}{2}\log (4\pi \tau),
\]
where $u\in W^{1,2}(M,g)$ satisfies $\int u^2\,dV_g=1$. Then \emph{Perelman's $\mu$-functional} is given by
\[
\mu(g,\tau)=\inf\left\{\mathcal{W}(g,u,\tau) \; : \; u\in W^{1,2}(M,g), \;\; \int u^2\,dV_g=1\right\}.
\]
\end{defn}


That the $\mu$-functional is a monotone quantity along the Ricci flow was first shown by Perelman \cite{Per} in the compact case, and has since received a lot of attention for the noncompact case (cf. \cite{CTY}, \cite{Zhang}, \cite{ChenEric}, etc.). The following lemma is the precise statement we will require, and follows from the (much more general) work in \cite{WangA}. 


\begin{lem}\label{monotonemufunct}
Suppose that $(M^n,g(t))$, $t\in [0,T]$ is a complete bounded curvature Ricci flow. Then for any $t\in [0,T]$ and $\tau>t$, there holds
\[
\mu(g(0), \tau)\leq \mu(g(t), \tau-t).
\]
\end{lem}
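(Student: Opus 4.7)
The plan is to adapt Perelman's classical compact-manifold proof of $\mu$-monotonicity to the complete bounded curvature setting via localization. On a compact manifold the argument runs as follows: for any $\varepsilon>0$ fix a near-minimizer $u_t$ for $\mu(g(t),\tau-t)$, evolve $v:=u_t^2$ backward along the conjugate heat equation coupled to the Ricci flow from $s=t$ to $s=0$, write $v(\cdot,s)=u_s^2$, and use Perelman's pointwise monotonicity
\[
\frac{d}{ds}\mathcal{W}(g(s),u_s,\tau-s)\geq 0
\]
together with the conservation $\int u_s^2\,dV_{g(s)}=1$. Integrating from $0$ to $t$ yields $\mathcal{W}(g(0),u_0,\tau)\leq \mathcal{W}(g(t),u_t,\tau-t)\leq \mu(g(t),\tau-t)+\varepsilon$, and infimizing the left-hand side over the resulting admissible $u_0$ gives the claim.

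In the complete noncompact case two things can fail: the integration-by-parts needed to derive the differential monotonicity requires decay at spatial infinity for $u_s$ and its derivatives, and a minimizer of $\mu(g(t),\tau-t)$ need not exist. The approach I would follow, as suggested by the cited references, is to localize. For any $\varepsilon>0$, by density of $C_c^{\infty}$ in $W^{1,2}$ I pick $u_t\in C_c^{\infty}(M)$ supported in some relatively compact $\Omega\subset\subset M$ with $\int u_t^2\,dV_{g(t)}=1$ and $\mathcal{W}(g(t),u_t,\tau-t)\leq \mu(g(t),\tau-t)+\varepsilon$. I then invoke the almost-monotonicity of the local $\mu$-functional \cite[Theorem~5.4]{WangA} to propagate $u_t$ backward along the conjugate heat flow on a slightly enlarged domain, producing $u_0$ with
\[
\mathcal{W}(g(0),u_0,\tau)\leq \mathcal{W}(g(t),u_t,\tau-t)+\eta(\Omega),
\]
where $\eta(\Omega)$ captures boundary/cutoff contributions. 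Taking an exhaustion $\Omega\uparrow M$ and appealing to the lower semi-continuity of the local $\mu$-functional \cite[Proposition~2.3]{WangA} to absorb $\eta(\Omega)\to 0$, and finally sending $\varepsilon\to 0$, delivers $\mu(g(0),\tau)\leq \mu(g(t),\tau-t)$.

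The main obstacle is essentially packaged into the two cited results from \cite{WangA}: one must produce an appropriate positive backward solution to the conjugate heat equation along a complete bounded curvature Ricci flow, with enough integrability that Perelman's differential inequality can be justified on each truncated domain, and one must control the cutoff errors sharply enough that they disappear as the exhausting domains grow. Once those two ingredients are accepted, the proof reduces to the bookkeeping above: approximate the global infimum by a compactly supported near-minimizer at time $t$, step backward in time with controlled error on the local $\mu$-functional, and pass the domain to $M$ using lower semi-continuity.
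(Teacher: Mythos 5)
Your proposal matches the paper's approach exactly: the paper gives no independent argument, but simply cites the almost-monotonicity of the local $\mu$-functional (\cite[Theorem 5.4]{WangA}) together with the lower-continuity of the local $\mu$-functional (\cite[Proposition 2.3]{WangA}) — the same two ingredients you identify — and lets them do all the work. Your narrative sketch (compactly supported near-minimizer at time $t$, backward propagation with controlled cutoff error, exhaustion of $M$) is a fair gloss on how those two cited results combine, though the conjugate-heat-flow machinery you describe is internal to Wang's proof rather than something one re-runs when invoking the theorem.
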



\begin{proof}[Sketch of Proof]
    Fix $t\in [0,T]$ and $\tau>t$. For all large enough $A$ (depending on the bound of the curvature along the flow and $\tau$), we apply the almost-monotonicity of the local $\mu$-functional {\cite[Theorem 5.4]{WangA}} to see that 
    \[
    \mu\big(B_{g(t)}(x_0,8A\sqrt{t}),g(t),\tau - t\big)-\mu\big(B_{g(0)}(x_0,20A\sqrt{t}),g(0),\tau \big)\geq -A^{-2}.
    \]
    The result follows by the lower-continuity of the local $\mu$-functional {\cite[Proposition 2.3]{WangA}} when we take $A\to \infty$.
\end{proof}

So if we have any complete bounded curvature Ricci flow $(M,g(t))$, $t\in [0,T]$ such that inequality \eqref{l3.1part2} holds for all $L^2$-normalized $W^{1,2}(M, g(t))$ functions $u$, then this establishes the inequality
\[
\mu(g(0),\tau+t) \geq -\frac{n}{2}\left(\log C_{g(0)}+\log \frac{n}{4}-1\right) -n-\frac{n}{4}\log (4\pi)
\]
for any $t\in [0,T]$ and any $\tau>0$. Now the monotonicity of the $\mu$-functional implies that the above holds with $\mu(g(t), \tau)$ replacing $\mu(g(0), \tau+t)$. This in turn is equivalent to a log-Sobolev inequality at time $t$. More precisely, there holds 
\begin{align}\label{sobineqalongflowmufunct}
	\int u^2\log u^2\,dV_{g(t)} &\leq 4\tau \left(\int |\nabla u|_{g(t)}^2+\frac{R_{g(t)}}{4}u^2\,dV_{g(t)}\right) -\frac{n}{2}\log \tau \\
				 \notag    &\quad +\frac{n}{2}\left(\log C_{g(0)}+\log \frac{n}{4}-1\right)
\end{align}
for any $\tau>0$. 

\medskip

\noindent \textbf{Step 3: log-Sobolev inequality implies Sobolev inequality.}

\vspace{5pt}

The method of obtaining a Sobolev inequality from a log-Sobolev inequality is well known in the literature: Davies {\cite[Chapters 1-2]{Davies}} describes the process in the general context of abstract measure spaces $(\Omega, \text{d}x)$; Ye {\cite[Theorems 5.3-5.5]{Ye}} works out this process for compact Riemannian manifolds $(M, g)$, showing in particular that the Sobolev constant obtained does not depend on the geometry of $g$; and Chen {\cite[Appendix B]{ChenThesis}} extends this to asymptotically flat manifolds $(M, g)$ by applying Ye's result to obtain a uniform Sobolev inequality for each member of an exhaustion by compact sets $\Omega_1\subset \Omega_2\subset \dots \subset M$, then extending this to a Sobolev inequality on $M$ by this uniformity in $j$, and the fact that $W^{1,2}_c(M, g)=\bigcup_{j\in \mathbb{N}}W^{1,2}_c(\Omega_j, g)$ is dense in $W^{1, 2}(M, g)$ on complete Riemannian manifolds. We observe that in this argument, the condition of asymptotic flatness can actually be weakened to the condition of $g$ being complete with bounded curvature as in our setting. In summary, we have the following.


\begin{prop}[cf. \cite{Davies}, \cite{Ye}, \cite{ChenThesis}]\label{5.3}
For $n\geq 3$ and $C>0$, there exists a constant $\tilde C(n,C)>0$ such that the following holds. Let $(M^n,h)$, $n\geq 3$ be any complete bounded curvature manifold such that the log-Sobolev inequality
\[
\int_M u^2 \log u^2 \,dV\leq \sigma \left(\int_M |\nabla u|^2+\Psi u^2\,dV\right)-\frac{n}{2}\log \sigma +C
\]
holds for all $\sigma>0$ and all $L^2$-normalized $\|u\|_2=1$ functions $u\in W^{1,2}(M)(M, h)$ and where $\Psi$ is a nonnegative bounded function on $M$. Then the Sobolev inequality
\[
\left(\int_M u^{\frac{2n}{n-2}}\,dV_h\right)^{\frac{n-2}{n}}\leq \tilde C(n,C)\left(\int |\nabla u|^2+\Psi u^2\,dV_h\right)
\]
holds for all $u\in W^{1,2}(M,h)$.
\end{prop}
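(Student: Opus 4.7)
The plan is to follow the classical Davies--Varopoulos strategy, converting the log-Sobolev hypothesis into an ultracontractivity estimate for the heat semigroup of the Schr\"odinger-type operator $L_\Psi := -\Delta_h + \Psi$ and then into the Sobolev inequality via Marcinkiewicz interpolation, with the exhaustion trick of \cite{ChenThesis} to handle non-compactness. Concretely, I would first exhaust $M$ by relatively compact open sets $\Omega_1 \subset \Omega_2 \subset \cdots$ with smooth boundary. On each $\Omega_j$, consider $L_\Psi$ with Dirichlet boundary conditions; since $\Psi \geq 0$ is bounded, this is a non-negative self-adjoint operator on $L^2(\Omega_j, h)$, and the semigroup $P_t^j := e^{-t L_\Psi}$ is sub-Markovian (by the pointwise domination $0 \leq P_t^j f \leq e^{t\Delta} f$ for non-negative $f$), hence a contraction on every $L^q(\Omega_j)$.

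The central step is to prove, with constants independent of $j$, the ultracontractivity estimate $\|P_t^j f\|_{L^\infty(\Omega_j)} \leq A(n,C)\, t^{-n/2}\|f\|_{L^1(\Omega_j)}$ for $0 < t \leq 1$. Davies' method proceeds as follows: set $u(x,s) = P_s^j f$ with $f \geq 0$ and $\|f\|_1 = 1$, choose an increasing profile $p(s)$ with $p(0)$ slightly above $1$ and $p(t) = \infty$, and differentiate $\log \|u(\cdot,s)\|_{p(s)}$ in $s$. After using $u_s = \Delta u - \Psi u$, integrating by parts (the Dirichlet conditions kill the boundary terms), and substituting $v := u^{p/2}$, the derivative decomposes into a negative Dirichlet-type piece $-\tfrac{4(p-1)}{p^2}\int |\nabla v|^2 \, dV_h - \int \Psi v^2 \, dV_h$ and a term proportional to $\int v^2 \log v^2 \, dV_h$ coming from $p'$. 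Since $\tfrac{4(p-1)}{p^2} \leq 1$ for $p \geq 2$, the Dirichlet piece dominates $-\tfrac{4(p-1)}{p^2}\int(|\nabla v|^2 + \Psi v^2)\,dV_h$, which is precisely the quantity controlled by the hypothesized log-Sobolev inequality applied to $v/\|v\|_2$. Coordinating the choice of $\sigma(s)$ in the log-Sobolev inequality with $p(s)$ so that the Dirichlet-type terms cancel reduces the computation to an explicit linear ODE in $s$, which integrates to yield the $t^{-n/2}$ rate with constant depending only on $n$ and $C$.

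Converting ultracontractivity into the Sobolev inequality is then classical: using the representation $L_\Psi^{-1/2} = c_n \int_0^\infty s^{-1/2}\, P_s^j \, ds$, splitting the integral, and applying the Marcinkiewicz Interpolation Theorem \ref{Mark} shows that $L_\Psi^{-1/2}$ is bounded from $L^2(\Omega_j)$ to $L^{2n/(n-2)}(\Omega_j)$ with norm depending only on $n$ and $C$, which unwound is exactly the Sobolev inequality on $\Omega_j$ with uniform constant $\tilde C(n,C)$. Since every $u \in W^{1,2}_0(M)$ has compact support (by the excerpt's definition), extension by zero places $u$ in $W^{1,2}_0(\Omega_j)$ for all sufficiently large $j$, and the Sobolev inequality on $M$ follows from the $j$-uniform one on each $\Omega_j$ by letting $j \to \infty$.

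The main obstacle will be implementing Davies' ODE step cleanly in the presence of the potential $\Psi$: one must verify that the integration-by-parts identity produces a lower bound recognizably in terms of $\int(|\nabla v|^2 + \Psi v^2)\,dV_h$, and coordinate the choices of $p(s)$ and $\sigma(s)$ so that the resulting ODE integrates explicitly to the correct $t^{-n/2}$ scaling with an $n$- and $C$-dependent constant. Secondarily, tracking this constant through Marcinkiewicz interpolation is delicate and is presumably the step alluded to in the acknowledgements.
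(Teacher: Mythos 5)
Your proposal is correct and follows essentially the same route the paper takes: the paper itself defers the details of Proposition \ref{5.3} to Davies, Ye, and Chen's thesis, and the appendix explicitly tracks constants through Ye's Theorems 5.3--5.4, which is precisely the Davies ultracontractivity argument (log-Sobolev $\Rightarrow$ $L^2$-to-$L^\infty$ heat kernel bound) followed by Marcinkiewicz interpolation on $L_\Psi^{-1/2}$, applied on an exhaustion $\Omega_1 \subset \Omega_2 \subset \cdots$ with Dirichlet boundary conditions and passed to $M$ via $W^{1,2}_0(M)=\bigcup_j W^{1,2}_0(\Omega_j)$. Your identification of the Marcinkiewicz-constant tracking as the delicate part matches the paper's appendix, where exactly this computation is carried out with the improved interpolation constant of Theorem \ref{Mark}.
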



Now steps 1-3 above can be combined to give a uniform global Sobolev inequality along the flow.


\begin{thm}\label{C}
For $n\geq 3$, there exists a dimensional constant $L(n)>0$ such that the following holds. Suppose that $(M^n,g(t))$, $t\in [0,T]$ is a complete bounded curvature Ricci flow and that the initial metric $g(0)=g_0$ satisfies the conditions of Lemma \ref{logsobinitial}  and the curvature condition
\[
\left(\int_M |R|_{g_0}^{\frac{n}{2}}\,dV_{g_0}\right)^{\frac{2}{n}}\leq \frac{2}{C_{g_0}}.
\]
Then with $A=LC_{g_0}$, the weighted Sobolev inequality  
\begin{equation}\label{Ce1}
\left(\int |u|^{\frac{2n}{n-2}}\,dV_{g(t)}\right)^{\frac{n-2}{n}}\leq A\left(\int |\nabla_{g(t)}u|^2+R_{g(t)}^+u^2\,dV_{g(t)}\right)
\end{equation}
holds for all $u\in W^{1,2}(M^n,g(t))$ on $[0,T]$. 

\end{thm}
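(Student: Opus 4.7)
The plan is simply to chain together the three ingredients that this section has just assembled. First, I apply Lemma \ref{logsobinitial}(2) to $g_0$: the hypothesis that $g_0$ satisfies the Sobolev inequality with constant $C_{g_0}$ (implicit in the statement, since $C_{g_0}$ appears in the conclusion) together with the assumed scalar curvature pinching $(\int |R_{g_0}|^{n/2}\,dV_{g_0})^{2/n}\leq 2/C_{g_0}$ are exactly the hypotheses of that lemma. This delivers the log-Sobolev inequality \eqref{l3.1part2} for $g_0$ with an additive constant depending only on $n$ and $\log C_{g_0}$.

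Second, I recast \eqref{l3.1part2} as the uniform lower bound
\[
\mu(g_0,\tau)\;\geq\; -\frac{n}{2}\left(\log C_{g_0}+\log\tfrac{n}{4}-1\right)-n-\frac{n}{2}\log(4\pi)
\]
on Perelman's $\mu$-functional, valid for every $\tau>0$. Lemma \ref{monotonemufunct} transfers this lower bound to $\mu(g(t),\tau)$ for every $t\in[0,T]$ and every $\tau>0$, which unwinds to exactly the time-$t$ log-Sobolev inequality \eqref{sobineqalongflowmufunct} with the \emph{same} constants (no $t$-dependence).

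Third, I invoke Proposition \ref{5.3} at each fixed $t$ to upgrade log-Sobolev to Sobolev. Replacing $R_{g(t)}$ by its positive part $R_{g(t)}^+$ in \eqref{sobineqalongflowmufunct} only weakens the inequality, and produces a nonnegative weight $\Psi:=R_{g(t)}^+/4$ which is bounded thanks to the bounded-curvature assumption on the flow. Setting $\sigma=4\tau$, Proposition \ref{5.3} then yields \eqref{Ce1} with a constant $\tilde C(n,C)$ that depends only on $n$ and on the additive log-Sobolev constant $C=\frac{n}{2}(\log C_{g_0}+\log\frac{n}{4}-1)$. Tracking how $C_{g_0}$ enters — only through this additive term, which after exponentiation in Davies' machinery becomes a multiplicative factor of $C_{g_0}$ — lets me write the Sobolev constant as $A=L(n)C_{g_0}$ for a purely dimensional $L(n)$; the explicit bookkeeping for $L(n)$ is deferred to the appendix.

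The delicate point, rather than a real obstacle, is ensuring that the constant produced by Proposition \ref{5.3} is genuinely independent of $t$ and of the quantitative geometry of $g(t)$ beyond the $C_{g_0}$-dependence. This is precisely what makes Ye's log-Sobolev-to-Sobolev passage useful here: bounded curvature at time $t$ is only needed \emph{qualitatively}, to have a well-behaved heat semigroup with which to run Davies' argument, while all quantitative dependence is carried by the additive log-Sobolev constant propagated from $g_0$. One therefore only needs to check that Chen's compact-exhaustion extension of Ye's argument (Step 3 of the outline) goes through in the complete bounded-curvature setting rather than the asymptotically flat one, which, as noted in the excerpt, is a direct reading of the argument.
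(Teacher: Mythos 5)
Your proposal is correct and follows the same three-step chain as the paper's proof: Lemma \ref{logsobinitial}(2) for the initial log-Sobolev inequality, Lemma \ref{monotonemufunct} to propagate a $t$- and $\tau$-independent lower bound on $\mu(g(t),\tau)$, then Proposition \ref{5.3} with $\Psi=R_{g(t)}^+/4$ and $\sigma=4\tau$ to upgrade to the Sobolev inequality \eqref{Ce1}, with the linear dependence of $A$ on $C_{g_0}$ deferred to the constant-tracking in the appendix. Your closing remark — that bounded curvature at time $t$ is used only qualitatively to run Davies' machinery while the quantitative input is the additive log-Sobolev constant inherited from $g_0$ — is an accurate reading of why the paper's appeal to Proposition \ref{5.3} yields a time-uniform constant.
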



\begin{proof}
Part \textbf{(2)} of Lemma \ref{logsobinitial} gives us the following log-Sobolev inequality for the metric $g_0$: 
\begin{align*}
	\int u^2\log u^2\,dV_{g_0} &\leq 4\tau \left(\int |\nabla u|^2+\frac{R}{4}u^2\,dV_{g_0}\right) -\frac{n}{2}\log \tau \\
				   &\quad +\frac{n}{2}\left(\log C_{g_0}+\log \frac{n}{4}-1\right).
\end{align*}
Monotonicity of the $\mu$-functional (Lemma \ref{monotonemufunct}) implies that the uniform log-Sobolev inequality \eqref{sobineqalongflowmufunct} holds for any $t\in [0,T]$ and any $\tau>0$. Now given a $t\in [0,T]$, we replace $R_{g(t)}$ with $R_{g(t)}^+$ in \eqref{sobineqalongflowmufunct} to obtain 
\begin{align}\label{Ce11}
	\int u^2\log u^2\,dV_{g(t)} &\leq 4\tau \left(\int |\nabla u|_{g(t)}^2+\frac{R^+_{g(t)}}{4}u^2\,dV_{g(t)}\right) -\frac{n}{2}\log \tau\\
				\notag     &\quad +\frac{n}{2}\left(\log C_{g_0}+\log \frac{n}{4}-1\right).
\end{align}
We are then in a position to apply Proposition \ref{5.3} which allows us to conclude that \eqref{Ce1} is true at any given time $t$. Since the RHS of \eqref{Ce11} is independent of $t\in [0,T]$, this establishes the result. The fact that $A$ depends linearly on $C_{g_0}$ is realized by tracking the constants in \cite{Ye} - this computation is carried out in the appendix wherein we also provide a bound on $L(n)$. 
\end{proof}


\section{Proof of Theorem \ref{main}} \label{proofofmain}


The proof of Theorem \ref{main} will be broken down into 10 intermediate steps within this section. The first eight of which (Lemma \ref{l4.1} to Proposition~\ref{4.8}) follow the same outline as done by Chen \cite{ChenEric} and the precise naming and numbering of those results are specifically chosen to align with the analogous statements in {\cite[Section 4]{ChenEric}}. The proofs are (naturally) very similar the originals and, in some cases, are identical. For the readers' convenience, we will present the proofs in full when there are notable differences, and will omit them otherwise. \\

We now start the proof of Theorem \ref{main}, and let $(M^n,g)$ satisfy the hypotheses of the theorem for some constant $C_g \in (0,\infty]$. We also let $g(t)$ on $M\times[0, T_{max})$ be the unique complete bounded curvature Ricci flow with $g(0)=g$ with existence guaranteed from \cite{Shi} and uniqueness from \cite{CZ} (here $T_{max}$ is the maximal existence time for the flow and may equal $\infty$).  Note that if $C_g=\infty$, then the integral curvature pinching condition \eqref{mainthme1} implies $|\text{Rm}|_g \equiv 0$ on $M$ and subsequently the conclusion is trivially satisfied by the stationary solution $g(t) \equiv g$.  So without loss of generality, we may assume $C_g<\infty$ in which case we will prove the theorem assuming that the curvature pinching inequality \eqref{mainthme1} is strict, albeit with the slightly larger constant
\[
\delta_1(n):=\frac{1}{39n L(n)}\frac{1}{C_{g}}
\]
 where $L(n)$ is the dimensional constant from Theorem \ref{C}. \\

For ease of reference, we summarize our assumptions as follows:
\begin{equation}\label{section4assumptions}
\begin{cases}
(M^n,g(t)), \; n\geq 3, \; t\in [0,T_{max}), \;g(0)=g, \\
\text{is a complete Ricci flow satisfying:}\\
\text{For any } \; T<T_{max}, \; \sup_{M\times [0,T]}|\text{Rm}|(x,t)<\infty \;\text{ and } \\
\left(\int |\text{Rm}|_{g}^{\frac{n}{2}}\,dV_{g}\right)^{\frac{2}{n}}<\delta_1(n)\frac{1}{C_{g}} \;\text{ where } \;C_g<\infty \;\text{ is such that }\\
\left(\int u^{\frac{2n}{n-2}}\,dV_{g}\right)^{\frac{n-2}{n}}\leq C_g \int |\nabla u|^2\,dV_g \text{ for all } u\in W^{1,2}(M, g).
\end{cases}
\end{equation}


We are now ready to continue with the proof. \\

Our first step is to determine the evolution of  the $L^p$ norm of the curvature. We choose to break up into two different cases based on the value of $p$ because the proof is simpler when $p/2\geq 1$ and may be of independent interest. 


\begin{lem}\label{l4.1} 
Let $(M^n,g(t))$, $t\in [0,T]$ be a complete bounded curvature Ricci flow and fix some smooth function $\phi: M\to \mathbb{R}$ with compact support. Then the following hold. 
\begin{enumerate}[(a)]
\item \label{4.1a} If $\alpha\geq 1$, then there holds
\begin{align*}
	\frac{d}{dt}\int \phi^2 |\text{Rm}|^{2\alpha}\,dV_t &\leq- 2 \int  |\nabla (\phi|\text{Rm}|^\alpha)|^2\,dV_t+(16\alpha+n) \int \phi^2|\text{Rm}|^{2\alpha+1}\,dV_t\\
							    &\quad +2\int |\nabla \phi|^2 |\text{Rm}|^{2\alpha}\,dV_t.
\end{align*}
\item \label{4.1b} If $\frac{1}{2}<\alpha<1$, then for any $\beta>0$ there holds
\begin{align*}
&\frac{d}{dt}\int \phi^2 (|\text{Rm}|^2+\beta)^{\alpha}\,dV_t \leq-C_1(\alpha) \int  |\nabla (\phi(|\text{Rm}|^2+\beta)^{\frac{\alpha}{2}})|^2\,dV_t\\&\;\;\;+(16\alpha+n) \int \phi^2(|\text{Rm}|^2+\beta)^{\alpha+\frac{1}{2}}\,dV_t+C_2(\alpha)\int |\nabla \phi|^2 (|\text{Rm}|^2+\beta)^{\alpha}\,dV_t.
\end{align*}
Moreover, $C_1(\alpha)\to 0$ as $\alpha\to \frac{1}{2}$ and if $\frac{2}{3}<\alpha<1$, then one can take $C_1(\alpha)=\frac{1}{2}$. 
\end{enumerate}
\end{lem}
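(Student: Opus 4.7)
My plan is to compute the evolution of $|\text{Rm}|^{2\alpha}$ (or its regularization) pointwise along the flow, combine it with $\partial_t dV_t = -R\,dV_t$, integrate against $\phi^2$, use integration by parts to produce a good gradient term, and then track the constants. The compact support of $\phi$ together with the bounded-curvature hypothesis makes differentiation under the integral sign routine, so the core of the work is the pointwise calculus.

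For part (\ref{4.1a}), starting from $\partial_t|\text{Rm}|^2 \le \Delta|\text{Rm}|^2 - 2|\nabla\text{Rm}|^2 + 16|\text{Rm}|^3$ and the identity
\[
\Delta |\text{Rm}|^{2\alpha} = \alpha|\text{Rm}|^{2\alpha-2}\Delta|\text{Rm}|^2 + 4\alpha(\alpha-1)|\text{Rm}|^{2\alpha-2}|\nabla|\text{Rm}||^2,
\]
together with Kato's inequality $|\nabla|\text{Rm}||\le|\nabla\text{Rm}|$, I get (for $\alpha\ge 1$, where the negative powers cause no issue)
\[
\partial_t|\text{Rm}|^{2\alpha} \le \Delta|\text{Rm}|^{2\alpha} - \tfrac{2(2\alpha-1)}{\alpha}|\nabla|\text{Rm}|^\alpha|^2 + 16\alpha|\text{Rm}|^{2\alpha+1}.
\]
Integrating $\phi^2$ times this, using $-R\le n|\text{Rm}|$ to handle the $-R\phi^2|\text{Rm}|^{2\alpha}$ term from $\partial_t dV_t$, and expanding $\int\phi^2\Delta|\text{Rm}|^{2\alpha}dV_t$ via $2\phi|\text{Rm}|^\alpha\langle\nabla\phi,\nabla|\text{Rm}|^\alpha\rangle = |\nabla(\phi|\text{Rm}|^\alpha)|^2 - \phi^2|\nabla|\text{Rm}|^\alpha|^2 - |\nabla\phi|^2|\text{Rm}|^{2\alpha}$, the coefficient of $\int\phi^2|\nabla|\text{Rm}|^\alpha|^2 dV_t$ becomes $\frac{2(1-\alpha)}{\alpha}\le 0$ and can be dropped, yielding exactly the stated inequality.

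For part (\ref{4.1b}) the obstacle is that when $\tfrac{1}{2}<\alpha<1$ the term $|\text{Rm}|^{2\alpha-2}$ blows up at zeros of $|\text{Rm}|$, so I mimic the $\beta$-regularization used in Section \ref{secboundedn/2}: setting $F := (|\text{Rm}|^2+\beta)^{1/2}$ and using $\partial_t\beta=0$, $\Delta\beta=0$, and $|\nabla F|^2 = \tfrac{|\text{Rm}|^2}{|\text{Rm}|^2+\beta}|\nabla|\text{Rm}||^2 \le |\nabla\text{Rm}|^2$, the same computation gives
\[
\partial_t F^{2\alpha} \le \Delta F^{2\alpha} - \tfrac{2(2\alpha-1)}{\alpha}|\nabla F^\alpha|^2 + 16\alpha F^{2\alpha+1},
\]
and $16|\text{Rm}|^3\le 16F^3$, $-R\le nF$ dispose of the zeroth-order terms. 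After the same integration by parts, the coefficient of $\int\phi^2|\nabla F^\alpha|^2 dV_t$ is now $\tfrac{2(1-\alpha)}{\alpha}>0$ and must be absorbed into $-2\int|\nabla(\phi F^\alpha)|^2 dV_t$. This I do by the Cauchy--Schwarz/Young step
\[
\phi^2|\nabla F^\alpha|^2 \le (1+\theta)|\nabla(\phi F^\alpha)|^2 + \bigl(1+\tfrac{1}{\theta}\bigr)|\nabla\phi|^2 F^{2\alpha},
\]
for some $\theta>0$. This produces $C_1(\alpha)=2-\tfrac{2(1-\alpha)(1+\theta)}{\alpha}$ and $C_2(\alpha)=2+\tfrac{2(1-\alpha)(1+1/\theta)}{\alpha}$. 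To ensure $C_1(\alpha)>0$ we need $(1-\alpha)(1+\theta)<\alpha$, which forces $\theta\to 0$ as $\alpha\to\tfrac{1}{2}^+$ and hence $C_1(\alpha)\to 0$; while for $\alpha\ge\tfrac{2}{3}$ the choice $\theta=\tfrac{1}{2}$ gives $(1-\alpha)(1+\theta)/\alpha\le \tfrac{3}{4}$ so $C_1(\alpha)\ge\tfrac{1}{2}$.

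The main obstacle is the $\tfrac{1}{2}<\alpha<1$ regime of (\ref{4.1b}): one must both regularize carefully (so the formal derivatives are legitimate) and choose $\theta$ sharply enough in the absorption to witness the quantitative behaviour of $C_1(\alpha)$ near $\alpha=\tfrac{1}{2}$ and at $\alpha=\tfrac{2}{3}$. The rest of the argument is standard Ricci-flow energy-estimate bookkeeping.
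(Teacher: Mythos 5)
Your proposal is correct and, in part \ref{4.1a}, essentially reproduces the paper's computation: the coefficient of $\int\phi^2|\nabla|\text{Rm}|^\alpha|^2\,dV_t$ comes out to $\tfrac{2(1-\alpha)}{\alpha}\le 0$ either way. One minor point: the paper is more careful about the fact that $|\text{Rm}|^{2\alpha}$ need not be $C^2$ at zeros of $|\text{Rm}|$ when $1\le\alpha<\tfrac{3}{2}$, and so it regularizes (via an $\epsilon$-shift and Dominated Convergence) the term $\int\phi^2\nabla|\text{Rm}|^{2(\alpha-1)}\cdot\nabla|\text{Rm}|^2\,dV_t$ arising after integration by parts rather than asserting the pointwise identity for $\Delta|\text{Rm}|^{2\alpha}$ directly; your parenthetical ``where the negative powers cause no issue'' glosses over this, though the conclusion is unaffected since the identity holds a.e.\ and everything is tested against compactly supported $\phi^2$.

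For part \ref{4.1b}, your route is a genuine and arguably cleaner reorganization. You first derive the pointwise inequality
\[
\partial_t F^{2\alpha} \le \Delta F^{2\alpha} - \tfrac{2(2\alpha-1)}{\alpha}|\nabla F^\alpha|^2 + 16\alpha F^{2\alpha+1}, \qquad F=(|\text{Rm}|^2+\beta)^{1/2},
\]
integrate by parts completely to land on the single problematic term $\tfrac{2(1-\alpha)}{\alpha}\int\phi^2|\nabla F^\alpha|^2\,dV_t$ (positive coefficient since $\alpha<1$), and then absorb it via one Young step $\phi^2|\nabla F^\alpha|^2 \le (1+\theta)|\nabla(\phi F^\alpha)|^2 + (1+\tfrac{1}{\theta})|\nabla\phi|^2F^{2\alpha}$. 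The paper instead applies Cauchy--Schwarz and Young earlier, on the cross term $\phi\langle\nabla\phi,\nabla|\text{Rm}|\rangle(|\text{Rm}|^2+\beta)^{\alpha-1}|\text{Rm}|$ produced by integrating by parts, introduces a small parameter $\epsilon$ there, and then has to convert back to $|\nabla(\phi F^\alpha)|^2$ via a ``reverse'' Cauchy--Schwarz. Both methods yield $C_1(\alpha)\to 0$ as $\alpha\to\tfrac{1}{2}^+$ and $C_1(\alpha)=\tfrac{1}{2}$ for $\alpha\ge\tfrac{2}{3}$ (your fixed $\theta=\tfrac{1}{2}$ even gives this uniformly on $[\tfrac{2}{3},1)$, whereas the paper's $\epsilon$ must be chosen $\alpha$-dependently), so there is no gap; your version simply keeps the bookkeeping tidier by working in the variable $F$ from the start.
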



\begin{proof}
First recall the well-known evolution of $|\text{Rm}|^2$ under the Ricci flow:
\begin{equation}\label{l4.1e1}
\partial_t |\text{Rm}|^2\leq \Delta |\text{Rm}|^2-2|\nabla \text{Rm}|^2+16 |\text{Rm}|^3.
\end{equation}

For part \ref{4.1a}, we combine this with $\partial_t dV_t=-R_tdV_t$, the Kato inequality $|\nabla |T||\leq |\nabla T|$, $|R|\leq n |\text{Rm}|$ (from {\cite[Proposition 7.28]{Lee}}), and $\alpha\geq 1$ which gives
\begin{align*}
&\frac{d}{dt}\int \phi^2|\text{Rm}|^{2\alpha}\,dV_t=\int \phi^2\frac{\partial}{\partial t}\left(|\text{Rm}|^{2\alpha}\,dV_t\right)\\
&=\int \phi^2\left(\alpha|\text{Rm}|^{2(\alpha-1)} \partial_t |\text{Rm}|^2 - R |\text{Rm}|^{2\alpha}\right)\,dV_t\\&\leq
\int \phi^2\alpha|\text{Rm}|^{2(\alpha-1)} \left(\Delta|\text{Rm}|^2 -2|\nabla \text{Rm}|^2+16 |\text{Rm}|^3\right)-\phi^2 R |\text{Rm}|^{2\alpha}\,dV_t\\&\leq
\int \phi^2\Bigl[\alpha|\text{Rm}|^{2(\alpha-1)} \Delta|\text{Rm}|^2 -2\alpha|\text{Rm}|^{2\alpha-2}|\nabla |\text{Rm}||^2\\
																			  &\hspace{3.3em}+\left(16\alpha + n\right) |\text{Rm}|^{2\alpha+1}\Bigr]\,dV_t\\&=
\int \phi^2\alpha|\text{Rm}|^{2(\alpha-1)} \Delta|\text{Rm}|^2 \,dV_t -\frac{2}{\alpha}\int \phi^2 |\nabla |\text{Rm}|^\alpha|^2\,dV_t\\
																			  &\quad +\left(16\alpha + n\right)\int \phi^2 |\text{Rm}|^{2\alpha+1}\,dV_t.
\end{align*}
Now we integrate by parts the Laplacian term using that the support of $\phi$ is compact (so that no boundary integrals are introduced) to obtain
\begin{align*}
&\int \phi^2\alpha|\text{Rm}|^{2(\alpha-1)} \Delta|\text{Rm}|^2 \,dV_t\\
&\quad =- \int \alpha \nabla\left(\phi^2|\text{Rm}|^{2(\alpha-1)}\right) \nabla|\text{Rm}|^2 \,dV_t\\
&\quad =-\int \alpha\nabla \phi^2 |\text{Rm}|^{2(\alpha-1)} \nabla|\text{Rm}|^2 \,dV_t -\int \alpha\phi^2\nabla|\text{Rm}|^{2(\alpha-1)} \nabla|\text{Rm}|^2 \,dV_t\\
&\quad = -2\int  \nabla \phi^2 |\text{Rm}|^\alpha \nabla|\text{Rm}|^\alpha \,dV_t-\int \alpha\phi^2\nabla|\text{Rm}|^{2(\alpha-1)} \nabla|\text{Rm}|^2 \,dV_t\\
&\quad = -4\int  \phi\nabla \phi |\text{Rm}|^\alpha \nabla|\text{Rm}|^\alpha \,dV_t-\int \alpha\phi^2\nabla|\text{Rm}|^{2(\alpha-1)} \nabla|\text{Rm}|^2 \,dV_t\\
&\quad = \textbf{I}-\alpha\, \textbf{II}.
\end{align*}
The first term above is simplified by expanding out the derivative
\begin{align}\label{4.1i}
	\textbf{I}&=-2\int |\nabla (\phi |\text{Rm}|^\alpha)|^2\,dV_t+2\int |\nabla \phi|^2 |\text{Rm}|^{2\alpha}\,dV_t\\
	\notag 	  &\quad +2\int \phi^2 |\nabla |\text{Rm}|^\alpha|^2\,dV_t.
\end{align}
For the second term, we have to be careful because $\nabla|\text{Rm}|^{2(\alpha-1)}$ is singular at points where $|\text{Rm}|=0$ when $\alpha <3/2$ (though this singularity is countered by a contribution of $|\text{Rm}|$ from the $\nabla |\text{Rm}|^2$ term). Since $\alpha\geq 1$, for any $\epsilon>0$ there holds
\begin{align*}
&\nabla(|\text{Rm}|+\epsilon)^{2(\alpha-1)} \nabla(|\text{Rm}|+\epsilon)^2\\
&\quad =4(\alpha-1)(|\text{Rm}|+\epsilon)^{2\alpha-3} (|\text{Rm}|+\epsilon) |\nabla (|\text{Rm}|+\epsilon)|^2\\
&\quad =4(\alpha-1)(|\text{Rm}|+\epsilon)^{2\alpha-2} |\nabla (|\text{Rm}|+\epsilon)|^2\\
&\quad =\frac{4(\alpha-1)}{\alpha^2}|\nabla (|\text{Rm}|+\epsilon)^\alpha|^2.
\end{align*}
Of course the RHS converges to $\frac{4(\alpha-1)}{\alpha^2}|\nabla |\text{Rm}|^\alpha|^2$ pointwise as $\epsilon\to 0$, and since $\phi$ is compactly supported we can apply the Dominated Convergence Theorem (twice) to see that 
\begin{align}
\label{4.1ii}
\textbf{II}&=\lim_{\epsilon \to 0}\int \phi^2\nabla(|\text{Rm}|+\epsilon)^{2(\alpha-1)} \nabla(|\text{Rm}|+\epsilon)^2 \,dV_t\\
\notag &=\lim_{\epsilon\to 0} \frac{4(\alpha-1)}{\alpha^2}\int \phi^2|\nabla (|\text{Rm}|+\epsilon)^\alpha|^2\,dV_t\\
\notag &=\frac{4(\alpha-1)}{\alpha^2}\int \phi^2|\nabla |\text{Rm}|^\alpha|^2\,dV_t.
\end{align}
So using \eqref{4.1i} and \eqref{4.1ii}, we have
\begin{align*}
	\int \phi^2\alpha|\text{Rm}|^{2(\alpha-1)} \Delta|\text{Rm}|^2 \,dV_t &= -4\int  \phi\nabla \phi |\text{Rm}|^\alpha \nabla|\text{Rm}|^\alpha \,dV_t\\
									      &\quad -\frac{4(\alpha-1)}{\alpha}\int \phi^2|\nabla |\text{Rm}|^\alpha|^2\,dV_t.
\end{align*}

Finally, we gather all the $\phi^2 |\nabla |\text{Rm}|^\alpha|^2$ terms to see that
\begin{multline}\label{l4.1final}
\left[-\frac{2}{\alpha}-\frac{4(\alpha-1)}{\alpha}+2 \right]\int \phi^2 |\nabla |\text{Rm}|^\alpha|^2\,dV_t \\
= -\frac{2(\alpha-1)}{\alpha}\int \phi^2 |\nabla |\text{Rm}|^\alpha|^2\,dV_t\leq 0,
\end{multline}
where we have again used that $\alpha\geq 1$. This proves part \ref{4.1a}.\\

\textbf{(b)} The proof is similar to {\cite[Lemma 2.1]{CCL}}, though in that proof it is assumed that $\alpha\geq \frac{n}{4}$ and that $\phi$ is a function of space \emph{and time}. Since our condition on $\alpha$ is different and our $\phi$ is a function of space only, we provide the details for the readers' convenience and to keep track of the constants. We begin just as before:
\begin{align*}
&\frac{d}{dt}\int \phi^2(|\text{Rm}|^2+\beta)^{\alpha}\,dV_t\\
&\quad =\int \phi^2\left(\alpha(|\text{Rm}|^2+\beta)^{\alpha-1} \partial_t |\text{Rm}|^2 - R (|\text{Rm}|^2+\beta)^{\alpha}\right)\,dV_t\\
&\quad \leq \int \phi^2\alpha(|\text{Rm}|^2+\beta)^{\alpha-1} \left(\Delta|\text{Rm}|^2 -2|\nabla |\text{Rm}||^2\right)\\
&\qquad\qquad +(n+16\alpha)\phi^2 (|\text{Rm}|^2+\beta)^{\alpha+\frac{1}{2}}\,dV_t.
\end{align*}
Now we integrate by parts the Laplacian term which gives
\begin{align*}
&\int \phi^2\alpha(|\text{Rm}|^2+\beta)^{\alpha-1} \Delta|\text{Rm}|^2 \,dV_t\\
&\quad =- \int \alpha \nabla\left(\phi^2(|\text{Rm}|^2+\beta)^{\alpha-1}\right) \nabla|\text{Rm}|^2 \,dV_t\\
&\quad = -4\alpha\int  \phi \langle\nabla \phi,\nabla |\text{Rm}|\rangle (|\text{Rm}|^2+\beta)^{\alpha-1} |\text{Rm}|  \,dV_t-4\alpha(\alpha-1)\\
&\qquad \times\int \phi^2(|\text{Rm}|^2+\beta)^{\alpha-2}|\text{Rm}|^2 |\nabla|\text{Rm}||^2 \,dV_t.
\end{align*}
Note that the coefficient in the latter term is actually positive because $\alpha<1$, so we have
\begin{multline*}
-4\alpha(\alpha-1)\int \phi^2(|\text{Rm}|^2+\beta)^{\alpha-2}|\text{Rm}|^2 |\nabla|\text{Rm}||^2 \,dV_t\\
\leq-4\alpha(\alpha-1)\int \phi^2(|\text{Rm}|^2+\beta)^{\alpha-1} |\nabla|\text{Rm}||^2 \,dV_t.
\end{multline*}
For the former term, we apply Cauchy-Schwarz, H\"older's, and Young's inequalities
\begingroup\allowdisplaybreaks\begin{align*}
& -4\alpha\int  \phi \langle\nabla \phi,\nabla |\text{Rm}|\rangle (|\text{Rm}|^2+\beta)^{\alpha-1} |\text{Rm}|  \,dV_t\\
&\quad \leq 4\alpha\int  \phi |\nabla \phi| \,|\nabla |\text{Rm}||\, (|\text{Rm}|^2+\beta)^{\alpha-1} |\text{Rm}|  \,dV_t\\
&\quad \leq  4\alpha\int  \phi |\nabla \phi| \,|\nabla |\text{Rm}||\, (|\text{Rm}|^2+\beta)^{\alpha-\frac{1}{2}}   \,dV_t\\
&\quad \leq  4\alpha\left(\int  \phi^2 (|\text{Rm}|^2+\beta)^{\alpha-1}|\nabla |\text{Rm}||^2 \,dV_t\right)^{\frac{1}{2}}\left(\int|\nabla \phi|^2(|\text{Rm}|^2+\beta)^{\alpha} \,dV_t\right)^{\frac{1}{2}}\\
&\quad \leq 2\alpha\epsilon\int  \phi^2 (|\text{Rm}|^2+\beta)^{\alpha-1}|\nabla |\text{Rm}||^2 \,dV_t\\
&\qquad +\frac{2\alpha}{\epsilon}\int|\nabla \phi|^2(|\text{Rm}|^2+\beta)^{\alpha} \,dV_t=\textbf{I}+\textbf{II}. 
\end{align*}\endgroup
Now we can gather up the terms in the form of \textbf{I} and restrict $\epsilon>0$ small enough so that the coefficient is negative. Then we can use Cauchy-Schwarz again (but in reverse) to see that
\begin{align*}
&\left[-2\alpha+4\alpha(1-\alpha)+2\alpha\epsilon\right]\int  \phi^2 (|\text{Rm}|^2+\beta)^{\alpha-1}|\nabla |\text{Rm}||^2 \,dV_t\\
&\quad \leq \left[-2\alpha+4\alpha(1-\alpha)+2\alpha\epsilon\right]\int  \phi^2 (|\text{Rm}|^2+\beta)^{\alpha-2}|\text{Rm}|^2|\nabla |\text{Rm}||^2 \,dV_t\\
&\quad = \left[-2\alpha+4\alpha(1-\alpha)+2\alpha\epsilon\right]\frac{1}{\alpha^2}\int  \phi^2 |\nabla (|\text{Rm}|^2+\beta)^{\frac{\alpha}{2}}|^2 \,dV_t\\
&\quad \leq \left[-2\alpha+4\alpha(1-\alpha)+2\alpha\epsilon\right]\\
&\quad \quad \times \frac{1}{\alpha^2}\left(\frac{1}{2}\int  |\nabla \phi (|\text{Rm}|^2+\beta)^{\frac{\alpha}{2}}|^2 \,dV_t-\int  |\nabla \phi|^2 (|\text{Rm}|^2+\beta)^{\alpha}\,dV_t\right).
\end{align*}
Now whenever $\alpha>\frac{1}{2}$, we can restrict $\epsilon$ small enough so that 
\[
-C_1(\alpha):=\left[-2\alpha+4\alpha(1-\alpha)+2\alpha\epsilon\right]\frac{1}{2\alpha^2}<0,
\]
and if $\alpha>\frac{2}{3}$, $\epsilon$ can be taken small enough so that $C_1(\alpha)=\frac{1}{2}$. This proves the result.
\end{proof}


We now prove that our hypotheses of \eqref{section4assumptions} are enough to ensure that the scale-invariant integral curvature quantity $\int |\text{Rm}|^{\frac{n}{2}}\,dV_t$ is nonincreasing in time. The following Lemma \ref{l4.2} is similar to {\cite[Lemma 2.2]{ChanLee}}. In fact the necessary tools needed in both proofs are the same: (a) a weighted Sobolev inequality with scalar curvature term along the flow; and (b) smallness of the scale-invariant integral curvature $\int |\text{Rm}|^{\frac{n}{2}} \,dV_t$ \emph{uniformly along the flow} relative to the constant in the Sobolev inequality. In \cite{ChanLee} when $n\geq 4$, both (a) and (b) are guaranteed by a rescaling argument from the short-time existence result in \cite{CHL}. For us however, we obtain (a) from Theorem \ref{C} while (b) is guaranteed for some short time interval by the upper-continuity of $t\mapsto \int |\text{Rm}|^{\frac{n}{2}}\,dV_t$ from Proposition \ref{finitenessofn/2}, which will be sufficient for our proof below. Since our hypotheses and some lines of argument differ from those in \cite{ChanLee}, we will include the proof for completeness. We will also refer to these steps in establishing the case for $n=3$, and as well as Lemma \ref{l4.4} which extends this monotonicity result to other powers of $|\text{Rm}|$.


\begin{lem}\label{l4.2} 
Let $(M^n,g(t))$ satisfy the conditions in \eqref{section4assumptions}. Then $t\mapsto \int |\text{Rm}|^{\frac{n}{2}}\,dV_t$ is nonincreasing on $[0,T_{max})$.
\end{lem}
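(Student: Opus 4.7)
The plan is to apply the evolution inequality from Lemma \ref{l4.1} with $\alpha = n/4$, taking part \ref{4.1a} when $n\geq 4$ (so $\alpha \geq 1$) and part \ref{4.1b} when $n=3$ (so $\alpha = 3/4 \in (2/3,1)$, giving $C_1(\alpha)=1/2$ and a regularization parameter $\beta>0$). Choose a compactly supported cutoff $\phi=\phi_R$ which equals $1$ on $B_{g(0)}(R)$ with $|\nabla_{g(t)}\phi_R|\leq C/R$. For $n\geq 4$ this gives
\[
\frac{d}{dt}\int \phi^2|\text{Rm}|^{n/2}\,dV_t \leq -2\int |\nabla(\phi|\text{Rm}|^{n/4})|^2\,dV_t + 5n\int \phi^2 |\text{Rm}|^{n/2+1}\,dV_t + 2\int |\nabla\phi|^2|\text{Rm}|^{n/2}\,dV_t.
\]

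The main step is to absorb the dangerous $|\text{Rm}|^{n/2+1}$ term using the weighted Sobolev inequality from Theorem \ref{C} (with constant $A = L(n)C_g$) together with H\"older. Specifically, H\"older with exponents $n/2$ and $n/(n-2)$ gives
\[
\int \phi^2 |\text{Rm}|^{n/2+1}\,dV_t \leq \Bigl(\int |\text{Rm}|^{n/2}\,dV_t\Bigr)^{2/n}\Bigl(\int (\phi|\text{Rm}|^{n/4})^{2n/(n-2)}\,dV_t\Bigr)^{(n-2)/n},
\]
and then Theorem \ref{C} applied to $\phi|\text{Rm}|^{n/4}$, combined with $R_{g(t)}^+ \leq n|\text{Rm}|$, bounds the second factor by $A\int |\nabla(\phi|\text{Rm}|^{n/4})|^2\,dV_t + An \int \phi^2|\text{Rm}|^{n/2+1}\,dV_t$. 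Provided $\|\text{Rm}\|_{L^{n/2}(g(t))} \cdot An < 1$, the $\int \phi^2|\text{Rm}|^{n/2+1}$ term on the right can be absorbed, and one gets an estimate of the shape $\int \phi^2 |\text{Rm}|^{n/2+1}\,dV_t \leq c\int |\nabla(\phi|\text{Rm}|^{n/4})|^2\,dV_t$ with $5nc < 2$. The choice $\delta_1(n) = 1/(39nL(n))$ makes this hold comfortably, so the right-hand side of the evolution becomes $\leq 2\int|\nabla\phi|^2|\text{Rm}|^{n/2}\,dV_t$. Letting $R\to\infty$ (for $n=3$, also $\beta\to 0$), the cutoff term vanishes because $|\text{Rm}|^{n/2}\in L^1_{g(t)}$ by Proposition \ref{finitenessofn/2} and $|\nabla\phi_R|^2 = O(R^{-2})$, yielding $\tfrac{d}{dt}\int|\text{Rm}|^{n/2}\,dV_t \leq 0$ wherever the smallness of the pinching is maintained.

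The main obstacle is the circularity: the absorption step requires $\|\text{Rm}\|_{L^{n/2}(g(t))}\leq \delta_1(n)/C_g$ to hold along the flow, which is precisely what monotonicity is supposed to guarantee. This is resolved by a standard continuity/bootstrap argument. Let
\[
I = \Bigl\{ t \in [0,T_{max}) : \textstyle\int |\text{Rm}|^{n/2}\,dV_s \leq \int |\text{Rm}|^{n/2}\,dV_0 \text{ for all } s\in [0,t]\Bigr\}.
\]
Proposition \ref{finitenessofn/2}, applied on any subinterval $[t_0,t_0+\eta]\subset [0,T_{max})$ by restarting the flow, yields upper semicontinuity of $t\mapsto \int|\text{Rm}|^{n/2}\,dV_t$, which makes $I$ closed in $[0,T_{max})$; in particular $0\in I$ and the strictness in \eqref{mainthme1} (with $\delta_1(n)$ replacing $\delta(n)$) gives slack so that on a neighborhood of any $t\in I$ the absorption step is valid, hence $\tfrac{d}{dt}\int |\text{Rm}|^{n/2}\,dV_t \leq 0$ there and $I$ is also open. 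Thus $I=[0,T_{max})$, proving monotonicity.

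A secondary technical point is the $n=3$ case, where $\alpha<1$ forces the use of Lemma \ref{l4.1}\ref{4.1b} with $\beta>0$. One carries out the same Sobolev--H\"older absorption applied to $\phi(|\text{Rm}|^2+\beta)^{3/8}$, exploiting that the smallness constants degrade continuously as $\beta\to 0$, and then passes to the limit $\beta\to 0$ by dominated convergence using the uniform $L^{n/2}$ bound on $|\text{Rm}|$ from Proposition \ref{finitenessofn/2}. The constant $C_2(\alpha)$ in Lemma \ref{l4.1}\ref{4.1b} affects only the cutoff term, which is killed by $R\to\infty$, so it does not disturb the final conclusion.
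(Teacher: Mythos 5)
Your proposal is correct and follows essentially the same route as the paper: apply Lemma \ref{l4.1} with $\alpha=n/4$ (part \ref{4.1a} for $n\geq 4$, part \ref{4.1b} with the $\beta$-regularization for $n=3$), cut off, combine H\"older with the weighted Sobolev inequality of Theorem \ref{C} to kill the $|\text{Rm}|^{n/2+1}$ term under the pinching assumption, send $r\to\infty$, and close a continuity argument in $t$ using Proposition \ref{finitenessofn/2}. Your rearrangement (H\"older first, then Sobolev with an absorption step) is algebraically equivalent to the paper's (Sobolev first, gather the $|\text{Rm}|^{n/2+1}$ terms, then H\"older) and yields the identical threshold $\|\text{Rm}\|_{L^{n/2}}<\tfrac{2}{7nA}$; the only minor imprecision is that what closes the set $I$ is the forward-in-time multiplicative bound of Proposition \ref{finitenessofn/2} (i.e. $\int|\text{Rm}|^{n/2}\,dV_a\leq e^{c(a-s)}\int|\text{Rm}|^{n/2}\,dV_s$ as $s\uparrow a$), not upper semicontinuity per se, though this is immediate from the same proposition.
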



\begin{proof}
We first prove the result for dimensions $n\geq 4$ and comment on the necessary modification for dimension $n=3$ at the end. Fix some $T<T_{max}$. It clearly suffices to show that $t\mapsto \int |\text{Rm}|^{\frac{n}{2}}\,dV_t$ is nonincreasing on $[0,T]$. \\

Let $\phi: M\to \mathbb{R}$ be some smooth compactly supported function to be chosen later. Lemma \ref{l4.1} part \ref{4.1a} (applied with $\alpha=\frac{n}{4}$) gives
\begin{align*}
	\frac{d}{dt}\int \phi^2|\text{Rm}|^{\frac{n}{2}}\,dV_t &\leq -2 \int |\nabla (\phi|\text{Rm}|^{\frac{n}{4}})|^2\,dV_t+(4n+n) \int \phi^2|\text{Rm}|^{\frac{n}{2}+1}\,dV_t\\
							       &\quad +2\int |\nabla \phi|^2 |\text{Rm}|^{\frac{n}{2}}\,dV_t.
\end{align*}
Note that $\delta_1(n)<\frac{2}{n}$ so that the conclusion of Theorem \ref{C} holds with $A=L(n)C_g$, thus giving
\begin{align*}
&-2\int |\nabla (\phi|\text{Rm}|^{\frac{n}{4}})|^2\,dV_t\\
&\quad \leq -\frac{2}{A} \left(\int (\phi|\text{Rm}|^{\frac{n}{4}})^{\frac{2n}{n-2}}\,dV_t\right)^{\frac{n-2}{n}}+2\int R^+ \phi^2|\text{Rm}|^{\frac{n}{2}}\,dV_t\\
&\quad \leq  -\frac{2}{A} \left(\int (\phi|\text{Rm}|^{\frac{n}{4}})^{\frac{2n}{n-2}}\,dV_t\right)^{\frac{n-2}{n}}+2n\int \phi^2|\text{Rm}|^{\frac{n}{2}+1}\,dV_t. 
\end{align*}
Now we gather the terms with $|\text{Rm}|^{\frac{n}{2}+1}$ in the integrand and apply H\"older's inequality to obtain
\[
7n\int \phi^2|\text{Rm}|^{\frac{n}{2}+1}\,dV_t\leq 7n\left(\int |\text{Rm}|^{\frac{n}{2}}\,dV_t\right)^{\frac{2}{n}}\left(\int \phi^{\frac{2n}{n-2}} |\text{Rm}|^{\frac{n}{4}\frac{2n}{n-2}}\,dV_t\right)^{\frac{n-2}{n}}.
\]
To summarize the above, for any compactly supported smooth $\phi$ we have 
\begin{align}\label{4.2e2}
&\frac{d}{dt}\int \phi^2|\text{Rm}|^{\frac{n}{2}}\,dV_t \\
\notag &\quad \leq \left(\int \phi^{\frac{2n}{n-2}}|\text{Rm}|^{\frac{n}{4}\frac{2n}{n-2}}\,dV_t\right)^{\frac{n-2}{n}}\left[-\frac{2}{A} +7n\left(\int |\text{Rm}|^{\frac{n}{2}}\,dV_t\right)^{\frac{2}{n}}\right]\\
\notag &\qquad +2\int |\nabla \phi|^2 |\text{Rm}|^{\frac{n}{2}}\,dV_t.
\end{align}

The choice of $\phi$ we will be employing is as follows. Let $\psi: \mathbb{R} \to [0,1]$ be a smooth decreasing cutoff function which satisfies $\psi\equiv 1$ on $(-\infty, 1]$ and $\psi\equiv 0$ on $[2, \infty)$. Fix some $x_0\in M$ and define, for $r>0$, 
\[
\phi_r(x)= \psi\left(\frac{\tilde d_0(x)}{r}\right).
\]
Here $\tilde d_0 $ is smooth with uniformly bounded gradient $\sup_{x\in M}|\nabla \tilde d_0|_g\leq c$, and uniformly bounded from below by the distance function from $x_0\in M$ relative to $g_0$ in the sense that $c^{-1} d_{g_0}(x_0, x) \leq \tilde d_0(x)$ for all $x\in M$ for some $c>0$ depending on $g_0$ (in particular on a lower bound of $\text{Rc}_{g_0}$, see {\cite[Theorem 4.2]{SY}}).  From this, assumption \eqref{section4assumptions}, and Proposition \ref{finitenessofn/2}, we can define the following constants which may depend on $T<T_{max}$ :
\begin{align*}
	C&:=\sup_{r\geq 1} \sup_{M\times [0,T]} r^2 |\nabla^{g(t)}\phi_r|_{g(t)}^2<\infty;\,\,\,\,K:=\sup_{M\times [0,T]}|\text{Rm}|<\infty;\\
	\beta&:=\sup_{t\in [0,T]}\int
|\text{Rm}|^{\frac{n}{2}}\,dV_t<\infty.
\end{align*}

Now we define the function 
\[
F(r, t):=\int \phi_r^2|\text{Rm}|^{\frac{n}{2}}\,dV_t
\]
where  $r\geq 1$ and $t\in [0, T]$ and define 
\[
T':=\sup \{t\in [0, T] : F(\infty, t) \leq (\delta_1/C_g)^{\frac{n}{2}}\}>0
\]
 where the non-emptyness of the set on the RHS follows from assumption~\eqref{section4assumptions} while the positivity of $T'$ follows from Proposition \ref{finitenessofn/2} (in particular, the upper-continuity of $t\mapsto \int |\text{Rm}|^{\frac{n}{2}}\,dV_t$).\\

Now using the fact that $[-\frac{2}{A} + 7n \frac{\delta_1}{C_{g(0)}}]<0$ which follows from $A=L(n)C_{g(0)}$ and our choice of $\delta_1$ in assumption \eqref{section4assumptions}, and our definition of $C, \beta$ above, the evolution \eqref{4.2e2} then gives
\[
\frac{d}{dt}F(r, t) \leq 2\frac{C}{\beta r^2}
\]
for all $r\geq 1$ and all $t\in [0, T']$.  In particular, for all $0<s_1<s_2<T'$ this gives
\begin{align*}
	\int |\text{Rm}|^{\frac{n}{2}}\,dV_{s_2}&=\lim_{r\to \infty}F(r, s_2) \leq \lim_{r\to \infty}\left( F(r, s_1)+ 2\frac{C (s_2 - s_1)}{\beta r^2}\right)\\
						&=F(\infty, s_1)=\int |\text{Rm}|^{\frac{n}{2}}dV_{s_1}
\end{align*}
so that $\int |\text{Rm}|^{\frac{n}{2}}\,dV_{t}$ is non-increasing for $t\in [0, T']$. Here we have used that $\tilde d_0(x) \geq c^{-1} d_g(x,x_0)$ and thus $\{\phi_r =1\}$ exhausts $M$ as $r\to \infty$. But since $\int |\text{Rm}|^{\frac{n}{2}}\,dV_{0}< (\delta_1/C_{g(0)})^{\frac{n}{2}}$, this just imples that $T'=T$ so that $\int |\text{Rm}|^{\frac{n}{2}}\,dV_{t}$ is nonincreasing for $t\in [0, T]$ which concludes the proof of the lemma for $n\geq 4$. \\

For the dimension $n=3$ case, we proceed just as we did above with part \ref{4.1b} of Lemma \ref{l4.1} instead of part \ref{4.1a}. The analogue of the evolution inequality \eqref{4.2e2} that we obtain in this case is
\begin{align*}
&\frac{d}{dt}\int \phi^2 (|\text{Rm}|^2+\beta)^{\alpha}\,dV_t \\
&\quad \leq C_2(3/4)\int |\nabla \phi|^2 (|\text{Rm}|^2+\beta)^{\alpha}\,dV_t\\
&\qquad +\left(\int \phi^6 (|\text{Rm}|^2+\beta)^{\frac{9}{4}}\,dV_t\right)^{\frac{1}{3}}\\
&\qquad\quad \times\left[-\frac{1}{2A}+\left(15+\frac{3}{2}\right)\left(\int_{\text{supp} \phi} (|\text{Rm}|^2+\beta)^{\frac{3}{4}}\,dV_t\right)^{\frac{2}{3}}\right].
\end{align*}
Now for fixed $r\gg 0$ and $\beta>0$, we can integrate the above inequality on the interval $[t_0,t_1]$, then send $\beta\to 0$ (which is validated by the precense of the cutoff function $\phi$) to obtain 
\begin{align}
\label{jfjfjf}
&\int \phi^2 |\text{Rm}|^{\frac{3}{2}}\,dV_{t_1}-\int \phi^2 |\text{Rm}|^{\frac{3}{2}}\,dV_{t_0}\\
\notag &\quad \leq C_2(3/4)\int_{t_0}^{t_1}\int |\nabla \phi|^2 |\text{Rm}|^{\frac{3}{2}}\,dV_t\,dt\\
\notag &\qquad +\int_{t_0}^{t_1}\left(\int \phi^6 |\text{Rm}|^{\frac{9}{2}}\,dV_t\right)^{\frac{1}{3}}\\
\notag &\qquad\quad\times \left[-\frac{1}{2A}+\left(15+\frac{3}{2}\right)\left(\int_{\text{supp} \phi} |\text{Rm}|^{\frac{3}{2}}\,dV_t\right)^{\frac{2}{3}}\right]\,dt.
\end{align}

From here, one can show $t\mapsto \int|\text{Rm}|^{\frac{3}{2}}\,dV_t$ is nonincreasing just as in the higher dimensional case above. In the process, we use the fact that 
\[
\delta_1(3)<\frac{1}{33 L(3)},
\]
so that the bracked term in \eqref{jfjfjf} is nonpositive.
\end{proof}


The following corollary is not essential to our proof (any time it is used we could use Theorem \ref{C} and Lemma \ref{l4.2} instead) but we include its statement for ease of reference. The proof is identical to the proof of {\cite[Corollary 4.3]{ChenEric}}.


\begin{cor}\label{l4.3} 
Let $(M^n,g(t))$ satisfy the conditions in \eqref{section4assumptions}. Then the uniform Sobolev inequality 
\begin{equation}\label{4.3e2}
\left(\int u^{\frac{2n}{n-2}}\,dV_t\right)^{\frac{n-2}{n}} \leq A_0\int |\nabla u|^2\,dV_t
\end{equation}
holds for all $t\in [0,T_{max})$ and $u\in W^{1,2}(M,g(t))$. Moreover we can take $A_0=2L(n) C_{g_0}$. 
\end{cor}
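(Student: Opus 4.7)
The plan is to combine the weighted Sobolev inequality from Theorem \ref{C} with the monotonicity of $\int |\text{Rm}|^{n/2}\,dV_t$ from Lemma \ref{l4.2} to absorb the scalar-curvature term on the right-hand side into the left-hand side.

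Fix $t\in [0,T_{\max})$ and $u\in W_0^{1,2}(M,g(t))$. By Theorem \ref{C} (applicable since $\delta_1(n)/C_g < 2/C_g$),
\[
\left(\int |u|^{\frac{2n}{n-2}}\,dV_t\right)^{\frac{n-2}{n}} \leq L(n)C_{g_0}\left(\int |\nabla u|^2\,dV_t + \int R^+ u^2\,dV_t\right).
\]
For the scalar-curvature term, H\"older's inequality together with the bound $|R|\leq n|\text{Rm}|$ yields
\[
\int R^+ u^2\,dV_t \leq n\left(\int |\text{Rm}|^{\frac{n}{2}}\,dV_t\right)^{\frac{2}{n}}\left(\int |u|^{\frac{2n}{n-2}}\,dV_t\right)^{\frac{n-2}{n}}.
\]

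Next I would invoke Lemma \ref{l4.2} (monotonicity of the scale-invariant integral curvature) together with the initial pinching in \eqref{section4assumptions} to get, for every $t\in[0,T_{\max})$,
\[
\left(\int |\text{Rm}|^{\frac{n}{2}}\,dV_t\right)^{\frac{2}{n}} \leq \left(\int |\text{Rm}|^{\frac{n}{2}}\,dV_{g_0}\right)^{\frac{2}{n}} < \frac{\delta_1(n)}{C_{g_0}} = \frac{1}{39n L(n)\,C_{g_0}}.
\]
Inserting this into the Sobolev bound gives
\[
\left(\int |u|^{\frac{2n}{n-2}}\,dV_t\right)^{\frac{n-2}{n}} \leq L(n)C_{g_0}\int |\nabla u|^2\,dV_t + \tfrac{1}{39}\left(\int |u|^{\frac{2n}{n-2}}\,dV_t\right)^{\frac{n-2}{n}}.
\]

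Because $u$ has compact support, the left-hand side is finite and we may subtract it, obtaining
\[
\left(\int |u|^{\frac{2n}{n-2}}\,dV_t\right)^{\frac{n-2}{n}} \leq \tfrac{39}{38}\,L(n)C_{g_0}\int |\nabla u|^2\,dV_t \leq 2L(n)C_{g_0}\int |\nabla u|^2\,dV_t,
\]
which is precisely \eqref{4.3e2} with $A_0 = 2L(n)C_{g_0}$. There is no real obstacle here once Lemma \ref{l4.2} is in hand; the only subtle point is the legitimacy of absorbing the $\int |u|^{2n/(n-2)}$ term, which is handled by the compact-support hypothesis built into $W_0^{1,2}$ (cf.\ Proposition \ref{5.3}). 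The numerical slack provided by the choice $\delta_1(n)=1/(39nL(n))$ is exactly what makes the absorption constant less than $2$.
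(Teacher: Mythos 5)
Your proof is correct and takes essentially the same route as the paper's (the paper defers to \cite[Corollary 4.3]{ChenEric}, but explicitly notes that this corollary "could be replaced by Theorem \ref{C} and Lemma \ref{l4.2}", which is exactly your absorption argument). One tiny imprecision: the hypothesis of Theorem \ref{C} is a bound on $\bigl(\int |R_{g_0}|^{n/2}\bigr)^{2/n}$, so after using $|R|\le n|\text{Rm}|$ you should be verifying $n\,\delta_1(n)\le 2$ rather than $\delta_1(n)<2$; with $\delta_1(n)=\tfrac{1}{39nL(n)}$ this holds with room to spare, so the conclusion is unaffected.
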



\begin{lem}\label{l4.4} 
Let $(M^n,g(t))$ satisfy the conditions in \eqref{section4assumptions}. Then $t\mapsto \int |\text{Rm}|^q\,dV_t$ is nonincreasing on $[0,T_{max})$ for any $q$ satisfying $\max\{2, \frac{n}{2}\}\leq q\leq \frac{n}{2}\frac{n}{n-2}$. 
\end{lem}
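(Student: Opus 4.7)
The plan is to combine Lemma \ref{l4.1}\eqref{4.1a} with $\alpha=q/2$ (permissible since $q\geq \max\{2,n/2\}$ implies $\alpha\geq 1$), the uniform Sobolev inequality from Corollary \ref{l4.3}, and the monotonicity $\int |\text{Rm}|^{n/2}\,dV_t\leq \int|\text{Rm}|^{n/2}\,dV_0<(\delta_1/C_g)^{n/2}$ from Lemma \ref{l4.2}, in the same cutoff-and-exhaust style as the proof of Lemma \ref{l4.2}. First, Proposition \ref{finitenessofn/2}, together with the bounded-curvature interpolation $|\text{Rm}|^{q}\leq K^{q-n/2}|\text{Rm}|^{n/2}$ available on any $[0,T]\subset [0,T_{max})$, guarantees that $\int|\text{Rm}|^q\,dV_t<\infty$, so all manipulations below are justified.

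Next I would apply Lemma \ref{l4.1}\eqref{4.1a} with $\alpha=q/2$ to a compactly supported cutoff $\phi$ to get
\[
\frac{d}{dt}\int \phi^2|\text{Rm}|^{q}\,dV_t\leq -2\int |\nabla(\phi|\text{Rm}|^{q/2})|^2\,dV_t+(8q+n)\int\phi^2|\text{Rm}|^{q+1}\,dV_t+2\int|\nabla\phi|^2|\text{Rm}|^{q}\,dV_t.
\]
The key algebraic step is to bound the $|\text{Rm}|^{q+1}$ term by splitting the exponent via Hölder with conjugate pair $(\tfrac{n}{n-2},\tfrac{n}{2})$: since $q\leq\tfrac{n}{2}\cdot\tfrac{n}{n-2}$, choosing the split $|\text{Rm}|^{q+1}=|\text{Rm}|^{q}\cdot|\text{Rm}|$ and distributing $\phi^2$ entirely into the first factor yields
\[
\int\phi^2|\text{Rm}|^{q+1}\,dV_t\leq \left(\int \phi^{\frac{2n}{n-2}}|\text{Rm}|^{\frac{qn}{n-2}}\,dV_t\right)^{\frac{n-2}{n}}\left(\int|\text{Rm}|^{\frac{n}{2}}\,dV_t\right)^{\frac{2}{n}}.
\]
Then Corollary \ref{l4.3} applied to $u=\phi|\text{Rm}|^{q/2}$ turns the first factor into $A_0\int|\nabla(\phi|\text{Rm}|^{q/2})|^2\,dV_t$, and combining gives
\[
\frac{d}{dt}\int\phi^2|\text{Rm}|^q\,dV_t\leq \Bigl[-2+(8q+n)A_0\bigl(\textstyle\int |\text{Rm}|^{n/2}\,dV_t\bigr)^{2/n}\Bigr]\int|\nabla(\phi|\text{Rm}|^{q/2})|^2\,dV_t+2\int|\nabla\phi|^2|\text{Rm}|^q\,dV_t.
\]

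The main quantitative check, which I expect to be the principal obstacle, is verifying that the bracket is nonpositive. Using $A_0=2L(n)C_g$, $\delta_1(n)=\frac{1}{39nL(n)}$ (so $\delta_1/C_g$ bounds $(\int|\text{Rm}|^{n/2}dV_t)^{2/n}$ by Lemma \ref{l4.2}), and $8q+n\leq\tfrac{5n^2-2n}{n-2}$ when $q\leq\tfrac{n^2}{2(n-2)}$, one checks that $(8q+n)A_0\cdot\delta_1/C_g\leq\tfrac{2(5n-2)}{39(n-2)}<2$ for every $n\geq 3$; this is exactly why the hypothesis forces $q$ no larger than the Sobolev conjugate of $n/2$.

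Finally, with the bracket strictly negative, the evolution reduces to $\frac{d}{dt}\int\phi^2|\text{Rm}|^q\,dV_t\leq 2\int|\nabla\phi|^2|\text{Rm}|^q\,dV_t$. Choosing the same cutoffs $\phi_r=\psi(\tilde d_0/r)$ as in the proof of Lemma \ref{l4.2}, we have $|\nabla\phi_r|^2\leq C/r^2$ uniformly on $[0,T]$ and $\phi_r\to 1$ pointwise as $r\to\infty$. Integrating in $t\in[s_1,s_2]$, using $\int|\text{Rm}|^q\,dV_t\leq\beta_q<\infty$ on $[0,T]$, and sending $r\to\infty$ (via monotone/dominated convergence) yields $\int|\text{Rm}|^q\,dV_{s_2}\leq\int|\text{Rm}|^q\,dV_{s_1}$, as desired.
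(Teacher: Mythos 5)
Your proposal is correct and follows essentially the same route as the paper: Lemma \ref{l4.1}\eqref{4.1a} with $\alpha=q/2$, the Hölder split of $|\text{Rm}|^{q+1}$ against $\left(\int|\text{Rm}|^{n/2}\right)^{2/n}$, Corollary \ref{l4.3} to absorb the resulting Sobolev term, the arithmetic bound $8q+n\leq\frac{5n^2-2n}{n-2}<39n$ against $\delta_1(n)$, and the same $\phi_r$ cutoff-and-send-$r\to\infty$ argument (justified by finiteness via interpolation with the curvature bound). The only cosmetic difference is that you factor the good term as a multiple of $\int|\nabla(\phi|\text{Rm}|^{q/2})|^2$ rather than of $\left(\int\phi^{\frac{2n}{n-2}}|\text{Rm}|^{\frac{qn}{n-2}}\right)^{\frac{n-2}{n}}$, which is logically equivalent.
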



\begin{proof}
We proceed in a similar fashion as Lemma \ref{l4.2}. Fix $T<T_{max}$ and note that Lemma \ref{l4.2} combined with our bounded curvature assumption implies
\[
\sup_{t\in [0,T]} \|\text{Rm}\|_{L^{q}_{g(t)}}<\infty.
\]

We apply Lemma \ref{l4.1} part \textbf{(a)} with $\alpha=q/2$ (note that $\alpha\geq 1$) and H\"older's inequality to see
\begin{align*}
&\frac{d}{dt}\int \phi^2|\text{Rm}|^{2\alpha}\,dV_t\\
&\quad \leq -2 \int |\nabla (\phi|\text{Rm}|^{\alpha})|^2\,dV_t\\
&\qquad +2\int |\nabla \phi|^2 |\text{Rm}|^{2\alpha}\,dV_t+(16\alpha+n) \int \phi^2|\text{Rm}|^{2\alpha+1}\,dV_t\\
&\quad \leq  -2 \int |\nabla (\phi|\text{Rm}|^{\alpha})|^2\,dV_t+2\int |\nabla \phi|^2 |\text{Rm}|^{2\alpha}\,dV_t\\
&\qquad  +(16\alpha+n) \left(\int |\text{Rm}|^{\frac{n}{2}}\,dV_t\right)^{\frac{2}{n}}\left(\int\phi^{\frac{2n}{n-2}} |\text{Rm}|^{\alpha\frac{2n}{n-2}}\,dV_t\right)^{\frac{n-2}{n}}.
\end{align*}
Now Corollary \ref{l4.3} gives
\[
-2 \int |\nabla (\phi|\text{Rm}|^{\alpha})|^2\,dV_t\leq -\frac{2}{A_0}\left(\int \phi^{\frac{2n}{n-2}}|\text{Rm}|^{\alpha\frac{2n}{n-2}}\,dV_t\right)^{\frac{n-2}{n}} .
\] 
We will also retain from Lemma \ref{l4.2} the notation of $\phi_r$ and the constant $C$ for which there holds 
\[
|\nabla^{g(t)} \phi_r|_{g(t)}^2 \leq \frac{C}{r^2}.
\]
Of course, the constant $C$ here will depend on the bound of the curvature on $M\times [0,T]$. Then plugging in $A_0=2LC_{g_0}$, applying Lemma \ref{l4.2}, and using our definition of $\delta_1(n)$ yields
\begin{align*}
&\frac{d}{dt}\int \phi_r^2|\text{Rm}|^{q}\,dV_t \\
& \leq \left(\int \phi_r^{\frac{2n}{n-2}}|\text{Rm}|^{q\frac{n}{n-2}}\,dV_t\right)^{\frac{n-2}{n}}\left[-\frac{2}{2LC_{g_0}}+(8q +n)\left(\int |\text{Rm}|^{\frac{n}{2}}\,dV_t\right)^{\frac{2}{n}}\right]\\
& \quad +2\int |\nabla \phi_r|^2 |\text{Rm}|^{2\alpha}\,dV_t\\
&  \leq  \left(\int |\text{Rm}|^{q\frac{n}{n-2}}\,dV_t\right)^{\frac{n-2}{n}}\left[-\frac{1}{LC_{g_0}}+\left(\frac{4n^2}{n-2} +n\right)\frac{\delta_1(n)}{C_{g_0}}\right]+2\frac{C}{r^2}\int  |\text{Rm}|^{2\alpha}\,dV_t\\
& \leq  2\frac{C}{r^2}\sup_{t\in [0,T]}\int  |\text{Rm}|^{q}\,dV_t,
\end{align*}
where we have used the fact that $\frac{4n^2}{n-2}+n<39n$. Thus for fixed $0\leq t_0<t_1\leq T$, we may integrate the above from $t_0$ to $t_1$ to see 
\begin{align*}
\int \phi_r^2 |\text{Rm}|^{q}\,dV_{t_1}&\leq \int \phi_r^2 |\text{Rm}|^{q}\,dV_{t_0}+2(t_1-t_0)\frac{C}{r^2}\sup_{t\in [0,T]}\int  |\text{Rm}|^{q}\,dV_t\\&\leq \int |\text{Rm}|^{q}\,dV_{t_0}+2(t_1-t_0)\frac{C}{r^2}\sup_{t\in [0,T]}\int  |\text{Rm}|^{q}\,dV_t.
\end{align*}
Taking $r\to \infty$ completes the proof. 
\end{proof}


\begin{prop}\label{4.5} 
Let $(M^n,g(t))$ satisfy the conditions in \eqref{section4assumptions}. Then the curvature estimate 
\[
\sup_{x\in M}|\text{Rm}|^2(x,t)\leq \max( c_1(g_0) t^{-\frac{2(n-2)}{n}}, c_2(g_0) t^{\frac{4(n-2)}{n^2}})
\]
holds for all $t\in [0,T_{max})$. Here $c_1(g_0)$ and $c_2(g_0)$ are positive constants depending only on the initial metric $g_0$. In particular, they depend only on $C_{g_0}$ and the initial curvature quantity $\int |\text{Rm}|^{\frac{n}{2}\frac{n}{n-2}}\,dV_0$. 
\end{prop}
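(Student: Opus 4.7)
The plan is a Moser iteration modeled on Chen's proof of \cite[Proposition 4.5]{ChenEric}, assembled from the evolution inequality of Lemma \ref{l4.1}(a), the uniform Sobolev inequality of Corollary \ref{l4.3}, and the monotonicity of $L^{n/2}$ and higher $L^q$ norms of curvature from Lemmas \ref{l4.2} and \ref{l4.4}; the scale-invariant smallness of $\|\text{Rm}\|_{n/2}$ from the hypotheses in \eqref{section4assumptions} is what absorbs the nonlinear $|\text{Rm}|^{q+1}$ terms. As a first step, for $q\in[\max(2,n/2),\tfrac{n^2}{2(n-2)}]$ I would apply Lemma \ref{l4.1}(a) with $\alpha=q/2$ against the cutoffs $\phi_r$ from Lemma \ref{l4.2} and send $r\to\infty$ (the $|\nabla\phi_r|^2$ error vanishes by bounded curvature on $[0,T]$ and Lemma \ref{l4.4}). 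The Sobolev inequality of Corollary \ref{l4.3} applied to $|\text{Rm}|^{q/2}$ together with H\"older applied to the $|\text{Rm}|^{q+1}$ term against $\|\text{Rm}\|_{n/2}$ then yields
\[
\tfrac{d}{dt}\|\text{Rm}\|_q^q \;\leq\; \bigl[-\tfrac{2}{A_0}+(8q+n)\|\text{Rm}\|_{n/2}\bigr]\,\|\text{Rm}\|_{qn/(n-2)}^q,
\]
and the choice of $\delta_1(n)$ with $A_0=2L(n)C_{g_0}$ forces the bracket to be strictly negative in this range. Interpolating $\|\text{Rm}\|_q$ between $\|\text{Rm}\|_{n/2}$ and $\|\text{Rm}\|_{qn/(n-2)}$ turns this into an ODI of the form $y'\le -c\,y^{1+\beta}$ with $y=\|\text{Rm}\|_q^q$ and $\beta=2/(2q-n)$, and integrating produces $\|\text{Rm}\|_q(t)\le C\,t^{-1/(q\beta)}$; specializing at $q_0:=\tfrac{n^2}{2(n-2)}$ gives the starting $L^{q_0}$ decay $\|\text{Rm}\|_{q_0}(t)\le C\,t^{-2/n}$ with $C$ depending only on $C_{g_0}$ and $\int|\text{Rm}|^{q_0}\,dV_0$.

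To upgrade this to a pointwise bound on $|\text{Rm}|^2$, Moser iterate: set $q_k=q_0\bigl(\tfrac{n}{n-2}\bigr)^k$ and choose a shrinking sequence of time intervals $[s_k,t]\subset(0,t]$ with $s_k\uparrow t$. At each step apply the evolution inequality for $\|\text{Rm}\|_{q_k}^{q_k}$ with a time-cutoff on $[s_{k-1},s_{k+1}]$, use Corollary \ref{l4.3}, and absorb the nonlinear term to bound $\sup_{[s_{k+1},t]}\|\text{Rm}\|_{q_{k+1}}$ by a constant times $\sup_{[s_k,t]}\|\text{Rm}\|_{q_k}$. The infinite product of these bounds converges because the Lebesgue exponent $1/q_k$ decays geometrically and dominates any polynomial-in-$q_k$ growth in the per-step constants, producing $|\text{Rm}|^2(x,t)\lesssim t^{-2(n-2)/n}$ times an initial-data factor. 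The second term $c_2(g_0)\,t^{4(n-2)/n^2}$ in the conclusion arises from the $t$-factors accumulated by the time-cutoffs at each step balanced against the shrinking Lebesgue exponent; it reflects an initial-data contribution that dominates the first term at large $t$, giving the claimed $\max$ form.

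The main obstacle will be executing the Moser iteration cleanly once $q_k$ exceeds the range (roughly $q<19n/4$) in which the $L^{n/2}$-absorption bracket remains negative. Past this threshold, the nonlinear $(8q_k+n)\int|\text{Rm}|^{q_k+1}\,dV_t$ must instead be absorbed into the gradient term $\int|\nabla|\text{Rm}|^{q_k/2}|^2$ by a different H\"older split with exponents tuned to $q_k$, producing per-step constants that grow polynomially in $q_k$. Arranging that these growing constants are swallowed by the geometric decay of $1/q_k$, and extracting the precise two-exponent maximum stated in the conclusion, is where the bulk of the bookkeeping lies.
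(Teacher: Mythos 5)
Your overall strategy---Moser iteration built from Lemma~\ref{l4.1}(a), Corollary~\ref{l4.3}, and the monotonicity of the $L^q$ curvature norms from Lemmas~\ref{l4.2} and \ref{l4.4}---is indeed the one the paper uses (following Chen). However, there are three specific points where your account diverges from what actually makes the proof work.

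\textbf{(1) The ODI detour is unnecessary and cannot feed the iteration.} You propose first establishing $\|\mathrm{Rm}\|_{q_0}(t)\le C t^{-2/n}$ via an ODI, and using this as the ``starting $L^{q_0}$ decay.'' The ODI itself is correct (the bracket is negative for all $q\le q_0=\tfrac{n^2}{2(n-2)}$ by the pinching hypothesis), but the decay is not the input to Proposition~\ref{4.5}. The base quantity to be bounded is
\[
\Phi_0=\Bigl(\int_0^T\int|\mathrm{Rm}|^{q_0}\,dV_t\,dt\Bigr)^{1/p_0},\qquad p_0=\tfrac{q_0}{2},
\]
and $\|\mathrm{Rm}\|_{q_0}(t)^{q_0}\lesssim t^{-2q_0/n}=t^{-n/(n-2)}$ is \emph{not} integrable near $t=0$. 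What actually goes in is the monotonicity $\|\mathrm{Rm}\|_{q_0}(t)\le\|\mathrm{Rm}\|_{q_0}(0)=:\beta$ from Lemma~\ref{l4.4}, giving $\Phi_0\le T^{1/p_0}\beta^2=T^{4(n-2)/n^2}\beta^2$. (The decay estimate you derive is real, but it belongs to the later argument for the $C_0/t$ bound, where the iteration is run on $[T/2,T]$, not to this proposition.)

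\textbf{(2) The absorption mechanism for large $q_k$ is misdescribed.} You say the nonlinear term $(8q_k+n)\int|\mathrm{Rm}|^{q_k+1}$ should be absorbed via ``a different H\"older split with exponents tuned to $q_k$.'' In the actual argument the generalized H\"older split always uses the \emph{fixed} exponent $q_0=\tfrac{n}{2}\tfrac{n}{n-2}$ to produce $\|\mathrm{Rm}\|_{L^{q_0}}$ (controlled uniformly in $t$ by Lemma~\ref{l4.4}); the free parameter is the constant $\eta$ in the subsequent Young's inequality, chosen \emph{depending on $p$} so that the coefficient of $\bigl(\int\phi^{2n/(n-2)}f^{pn/(n-2)}\bigr)^{(n-2)/n}$ exactly cancels the Sobolev term. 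That is what yields the clean evolution inequality $\tfrac{d}{dt}\int\phi^2f^p+\int|\nabla(\phi f^{p/2})|^2\le K_p\int\phi^2f^p+\epsilon$ with $K_p\approx 12p\beta^{n/2}A_0^{(n-2)/2}$ growing only linearly in $p$. The negativity of the bracket plays no role at this stage---it is used only to prove Lemmas~\ref{l4.2} and \ref{l4.4}, which are inputs.

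\textbf{(3) The provenance of the two exponents in the $\max$ is reversed.} You attribute the growing term $c_2(g_0)t^{4(n-2)/n^2}$ to ``the $t$-factors accumulated by the time-cutoffs.'' It is the other way around: the accumulated time-cutoff denominators $(\tau_{k+1}-\tau_k)^{-1}$ produce the factor $T^{-2(n^2-4)/n^2}$, which, multiplied against the $\Phi_0\le T^{4(n-2)/n^2}\beta^2$ bound, gives the decaying term $c_1(g_0)T^{-2(n-2)/n}$. The growing term $c_2(g_0)T^{4(n-2)/n^2}$ is the other branch of the $\max$, obtained when the $\beta$-dependent constant in $K_{p_k}$ dominates the $\tfrac{1}{\tau_{k+1}-\tau_k}$ term (i.e.\ for $T$ large), leaving $\Phi_0$'s $T^{4(n-2)/n^2}$ factor exposed.

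With these corrections the skeleton you outline is essentially the paper's proof, run on the space-time quantities $H(p_k,\tau_k)^{1/p_k}$ with $p_k=\nu^k p_0$, $\nu=1+2/n$, and $\tau_k=(1-\eta^{-k})T$.
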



\begin{proof}
Like the previous lemmas, we will fix some $T<T_{max}$ and prove the estimate on $[0,T]$. For convenience of notation, let $f(x,t)=|\text{Rm}|^2(x,t)$ and write $\phi$ to be some $\phi_r$ just as in the notation used in Lemma \ref{l4.2} (we'll specify $r$ later). By Lemma \ref{l4.1}, for any $p\geq \frac{n}{4}\frac{n}{n-2}$ (note that this condition forces $p\geq 1$), there holds
\begin{align}\label{4.5e1}
&\frac{d}{dt}\int \phi^2f^p \,dV_t+\int |\nabla (\phi f^{\frac{p}{2}})|^2\,dV_t\\
\notag &\quad \leq -\int |\nabla (\phi f^{\frac{p}{2}})|^2\,dV_t+(16p+n) \int \phi^2|\text{Rm}|f^p\,dV_t\\
\notag &\qquad +2\int |\nabla \phi|^2 f^p\,dV_t.\\
\notag &\quad =:\textbf{I}+(16p+n)\textbf{II}+\textbf{III}.
\end{align}
We will estimate the above terms $\textbf{I},\textbf{II},\textbf{III}$ separately.  \\

Corollary \ref{l4.3} immediately yields
\begin{equation}\label{4.5e2}
\textbf{I} \leq -\frac{1}{A_0} \left(\int \phi^{\frac{2n}{n-2}}f^{p\frac{n}{n-2}}\,dV_t\right)^{\frac{n-2}{n}}
\end{equation}
where $A=2L(n) C_{g_0}$. To estimate $\textbf{II}$, we recall the generalized H\"older's inequality: 
\[
\left\|\prod_{j=1}^k f_j\right\|_{L^1}\leq \prod_{j=1}^k \|f_j\|_{L^{r_j}} \;\text{ where } \; \sum_{j=1}^k \frac{1}{r_i}=1.
\]
We will apply this with 
\[
\begin{cases}
f_1=|\text{Rm}|,\;\; f_2=\eta^{-\left(1-\frac{n}{2q}\right)\frac{n}{2q}}\phi^{2\left(1-\frac{n}{2q}\right)}f^{p\left(1-\frac{n}{2q}\right)}, \;\; f_3=\eta^{\left(1-\frac{n}{2q}\right)\frac{n}{2q}}\phi^{\frac{n}{q}}f^{p\frac{n}{2q}},\\
r_1=q, \;\; r_2=\frac{1}{1-\frac{n}{2q}}, \; \; r_3=\frac{2q}{n}\frac{n}{n-2},
\end{cases} 
\]
where $q=\frac{n}{2}\frac{n}{n-2}$ and $\eta>0$ will be chosen later. This gives
\begin{align*}
	\textbf{II} &\leq \left(\int |\text{Rm}|^q \,dV_t\right)^{\frac{1}{q}} \left(\eta^{-\frac{n}{2q}}\int \phi^2 f^p \,dV_t\right)^{1-\frac{n}{2q}}\\
		    &\quad\times \left(\eta^{\left(1-\frac{n}{2q}\right)\frac{n}{n-2}}\int \phi^{\frac{2n}{n-2}}f^{p\frac{n}{n-2}} \,dV_t\right)^{\frac{n-2}{n}\frac{n}{2q}}.
\end{align*}
Now we apply Lemma \ref{l4.4}, which states that $t\mapsto\|\text{Rm}\|_{L^q_{g(t)}}$ is nonincreasing along the flow. So writing $\beta=\|\text{Rm}\|_{L^q_{g(0)}}$, we have that $\|\text{Rm}\|_{L^q_{g(t)}}\leq \beta$ for all $t\geq 0$. Now to the remaining terms we apply Young's inequality 
\[
ab\leq \frac{a^{r_1}}{r_1}+\frac{b^{r_2}}{r_2}, \;\; \text{ where } \frac{1}{r_1}+\frac{1}{r_2}=1,
\]
with $r_1=\frac{1}{1-\frac{n}{2q}}$, $r_2=\frac{2q}{n}$ to see that 
\begin{align}
\textbf{II}&\leq \beta\left(\eta^{-\frac{n}{2q}}\int \phi^2 f^p \,dV_t\right)^{1-\frac{n}{2q}} \left(\eta^{\left(1-\frac{n}{2q}\right)\frac{n}{n-2}}\int \phi^{\frac{2n}{n-2}}f^{\frac{p}{2}\frac{2n}{n-2}} \,dV_t\right)^{\frac{n-2}{n}\frac{n}{2q}}\nonumber\\&\leq 
\beta\left(1-\frac{n}{2q}\right)\eta^{-\frac{n}{2q}}\int \phi^2 f^p \,dV_t+ \beta\frac{n}{2q}\eta^{1-\frac{n}{2q}}\left(\int \phi^{\frac{2n}{n-2}}f^{p\frac{n}{n-2}} \,dV_t\right)^{\frac{n-2}{n}}.\label{4.5e3}
\end{align}

Finally, to estimate $\textbf{III}$, we can use Lemma \ref{l4.2} combined with the fact that $(M, g(t))$, $t\in [0,T]$ is a \emph{bounded curvature} flow to obtain
\begin{equation}\label{4.5e3a}
\sup_{t\in [0,T]}\int f^p\,dV_t <\infty.
\end{equation}
Then following just as in the proof of the previous lemmas, for any $\epsilon>0$, there exists $r_\epsilon\gg 0$ such that whenever $r\geq r_\epsilon$ and $t\in [0,T]$ there holds 
\begin{equation}\label{4.5e4}
\textbf{III}<\epsilon.
\end{equation}

Now putting \eqref{4.5e2}, \eqref{4.5e3} and \eqref{4.5e4} back into \eqref{4.5e1} we have
\begin{align*}
\frac{d}{dt}\int \phi^2f^p \,dV_t&+\int |\nabla (\phi f^{\frac{p}{2}})|^2\,dV_t\leq \beta(16p+n)\left(1-\frac{n}{2q}\right)\eta^{-\frac{n}{2q}}\int \phi^2f^p \,dV_t\\&\;\;\;+\left[\frac{\beta n(16p+n)}{2q}\eta^{1-\frac{n}{2q}}-\frac{1}{A_0}\right]\left(\int \phi^{\frac{2n}{n-2}}f^{p\frac{n}{n-2}} \,dV_t\right)^{\frac{n-2}{n}}+\epsilon.
\end{align*}
Now we choose $\eta>0$ so that the bracketed quantity above is zero. Precisely, we take
\[
\eta^{1-\frac{n}{2q}}=\frac{2q}{\beta n(16p+n)A_0},
\]
so that plugging this value of $\eta$ back into the above, and simplifying gives
\begin{align}
\label{4.5e5}
&\frac{d}{dt}\int \phi^2f^p \,dV_t+\int |\nabla (\phi f^{\frac{p}{2}})|^2\,dV_t\\
\notag &\ \ \leq \beta(16p+n)\left(1-\frac{n}{2q}\right)\left(\frac{2q}{\beta n(16p+n)A_0}\right)^{-\frac{2q}{2q-n}\frac{n}{2q}}\int \phi^2 f^p \,dV_t +\epsilon \\
\notag &\ \ = \beta(16p+n)\frac{2}{n}\left(\frac{A_0(n-2)\beta(16p+n)}{n}\right)^{\frac{n-2}{2}}\int  \phi^2 f^p \,dV_t+\epsilon \\
\notag &\ \  \leq (20p)^{\frac{n}{2}} \beta^{\frac{n}{2}}A_0^{\frac{n-2}{2}}\int  \phi^2 f^p \,dV_t+\epsilon =:
K_{p}\int \phi^2 f^p\,dV_t+\epsilon,
\end{align}
where in the second line we used our value of $q=\frac{n}{2}\frac{n}{n-2}$, and in the last line we used the estimates $6p+n\leq 20p$ and $n\geq 3$. Now for $0<\tau<\tau'\leq T$, define $\psi: [0,T]\to [0,1]$ by 
\[
\psi(t)=\begin{cases}
0 & 0\leq t\leq \tau\\
\frac{t-\tau}{\tau'-\tau} & \tau\leq t\leq \tau'\\
1 & \tau'\leq t\leq T
\end{cases}.
\]
Now multiply \eqref{4.5e5} by $\psi$ and find that 
\[
\frac{d}{dt}\left(\psi\int \phi^2 f^p \,dV_t\right)+\psi\int |\nabla (\phi f^{\frac{p}{2}})|^2\,dV_t\leq (K_p\psi+\psi')\int \phi^2 f^p \,dV_t+\epsilon,
\]
so that after integrating from $\tau$ to $\tilde t\in [\tau',T]$, we obtain
\begin{align*}
&\int \phi^2 f^p \,dV_{\tilde t}+ \int_{\tau'}^{\tilde t}\int |\nabla (\phi f^{\frac{p}{2}})|^2\,dV_t\,dt\\
&\quad \leq \psi(\tilde t)\int \phi^2 f^p \,dV_{\tilde t}-\psi(\tau) \int \phi^2 f^p\,dV_\tau+ \int_\tau^{\tilde t}\psi\int |\nabla (\phi f^{\frac{p}{2}})|^2\,dV_t\,dt\\
&\quad = \int_\tau^{\tilde t}\left[\frac{d}{dt}\left(\psi\int \phi^2 f^p \,dV_t\right)+\psi\int |\nabla (\phi f^{\frac{p}{2}})|^2\,dV_t\right]\,dt\\
&\quad \leq  \int_\tau^{\tilde t}(K_p\psi+\psi')\int \phi^2 f^p \,dV_t\,dt+(\tilde t-\tau)\epsilon\\
&\quad \leq  \left(K_p+\frac{1}{\tau'-\tau}\right)\int_\tau^{T}\int  f^p \,dV_t\,dt+T\epsilon.
\end{align*}
In particular, because both terms on the LHS are nonnegative, the above yields two inequalities which will be used later. The first is 
\begin{multline}\label{4.5e8}
\int_{\tau'}^{T}\int |\nabla (\phi f^{\frac{p}{2}})|^2\,dV_t\,dt\\
\leq 
\left(K_p+\frac{1}{\tau'-\tau}\right)\int_\tau^{T}\int f^p \,dV_t\,dt+T\epsilon \; \text{ for any } \epsilon>0, r\geq r_{\epsilon}. 
\end{multline}
Likewise, considering the other term on the LHS gives
\[
\int \phi^2 f^p \,dV_{\tilde t}\leq\left(K_p+\frac{1}{\tau'-\tau}\right)\int_\tau^{T}\int  f^p \,dV_t\,dt +T\epsilon
\]
for any $\tilde t\in [\tau',T]$. Since the integrand $\phi^2 f^p$ is nonnegative however, we can take $r\to \infty$, followed by taking $\epsilon\to 0$ which gives the second useful inequality:
\begin{equation}\label{4.5e7}
\int f^p \,dV_{\tilde t}\leq\left(K_p+\frac{1}{\tau'-\tau}\right)\int_\tau^{T}\int  f^p \,dV_t\,dt.
\end{equation}

For $p\geq 1$ and $\tau\in [0,T]$, we define
\[
H(p,\tau)=\int_\tau^{T}\int f^p \,dV_t\,dt,
\]
and let $\nu=1+\frac{2}{n}$. We need to prove the following claim:\\

\textbf{Claim:} For $0\leq \tau<\tau'\leq T$, there holds 
\begin{equation}\label{4.5e9}
H(\nu p, \tau')\leq A_0 \left(K_p+\frac{1}{\tau'-\tau}\right)^\nu H(p,\tau)^\nu.
\end{equation}

\begin{proof}[Proof of Claim:]
We will first show the desired inequality albeit with a $\phi^2$ term in the integrand on the LHS. With this in mind, we apply H\"older's inequality followed by Corollary \ref{l4.3} to see that
\begin{align*}
\int_{\tau'}^{T}\int \phi^2 f^{p\left(1+\frac{2}{n}\right)} \,dV_t\,dt&\leq 
\int_{\tau'}^{T}\left(\int_{\text{supp} \phi} f^p \,dV_t\right)^{\frac{2}{n}}\left(\int(\phi f^{\frac{p}{2}})^{\frac{2n}{n-2}}\,dV_t\right)^{\frac{n-2}{n}}\,dt\\&\leq 
A_0\left(\sup_{\tau'\leq t\leq T}\int_{\text{supp} \phi} f^p \,dV_t\right)^{\frac{2}{n}}\int_{\tau'}^{T}\int |\nabla (\phi f^{\frac{p}{2}})|^2\,dV_t\,dt.
\end{align*}
Applying \eqref{4.5e8}, and noting that the supremum above is achieved at some  $\tilde t\in [\tau',T]$ (of course, the time $\tilde t$ possibly depends on $\phi$), we can continue our estimate above to 
\begin{align*}
&\int_{\tau'}^{T}\int \phi^2 f^{p\left(1+\frac{2}{n}\right)} \,dV_t\,dt\\
&\quad \leq A_0\left(\int f^p \,dV_{\tilde
t}\right)^{\frac{2}{n}}\left[\left(K_p+\frac{1}{\tau'-\tau}\right)\int_\tau^{T}\int
f^p \,dV_t\,dt+T\epsilon\right]\\
&\quad \leq
A_0\left(\int f^p \,dV_{\tilde t}\right)^{\frac{2}{n}}\left(K_p+\frac{1}{\tau'-\tau}\right)H(p,\tau)+A_0T\epsilon\left(\int f^p \,dV_{\tilde t}\right)^{\frac{2}{n}}\\
&\quad \leq
A_0\left(K_p+\frac{1}{\tau'-\tau}\right)^{\nu}H(p,\tau)^{\nu}+A_0T\epsilon\left(\sup_{t\in [0,T]}\int f^p \,dV_{ t}\right)^{\frac{2}{n}}.
\end{align*}

Now the RHS is independent of $r\geq r_\epsilon$, so we can take $r\to \infty$, apply the Monotone Convergence Theorem, then take $\epsilon\to 0$. Note that the quantity $\sup_{t\in [0,T]}\int f^p\,dV_t$ is finite by \eqref{4.5e3a} above. This proves the claim.
\end{proof}


At this point, the remainder of the proof is verbatim to {\cite[Proposition 4.5]{ChenEric}}. We will however provide an outline to compute the constants involved and for our future reference. Let $p_0=\frac{q}{2}=\frac{n}{4}\frac{n}{n-2}$, recall $\nu=1+\frac{2}{n}$ and define
\[
\eta=\nu^{\frac{n}{2}}, \;\; p_k=\nu^kp_0, \; \; \tau_k=(1-\eta^{-k})T, \;\; \Phi_k=H(p_k,\tau_k)^{\frac{1}{p_k}}.
\]
So applying \eqref{4.5e9}, we get
\[
\Phi_{k+1}\leq A_0^{\frac{1}{\nu p_k}} \left(\beta^{\frac{n}{2}} A_0^{\frac{n-2}{2}}(20p_0)^{\frac{n}{2}} +\frac{1}{(1-\eta^{-1})T}\right)^{\frac{1}{p_k}}\eta^{\frac{k}{p_k}} \Phi_k
\]
and iterating the above gives
\begin{equation}\label{4.5e10}
\lim_{k\to \infty}\Phi_{k}\leq A_0^{\frac{1}{\nu}\sum_{k=0}^\infty\frac{1}{ p_k}} \left(\beta^{\frac{n}{2}} A_0^{\frac{n-2}{2}}(20p_0)^{\frac{n}{2}}+\frac{1}{(1-\eta^{-1})T}\right)^{\sum_{k=0}^\infty\frac{1}{p_k}}\eta^{\sum_{k=0}^\infty \frac{k}{p_k}} \Phi_0.
\end{equation}
We can also calculate
\[
\sum_{k=0}^\infty \frac{1}{p_k}=\frac{2(n^2-4)}{n^2}, \;\; \frac{1}{\nu}\sum_{k=0}^\infty\frac{1}{ p_k}=\frac{2(n-2)}{n}, \;\;  \sum_{k=0}^\infty \frac{k}{p_k}=\frac{n^2-4}{n}.
\]
Finally, 
\[
\eta^{\frac{n^2-4}{n}}=\nu^{\frac{n}{2}\frac{n^2-4}{n}}=\left(1+\frac{2}{n}\right)^{\frac{n^2-4}{2}}\leq e^{n-1} 
\]
because the function $\left(1+\frac{2}{x}\right)^{\frac{x^2-4}{2}}e^{1-x}$ is increasing on $[0,\infty)$ and bounded by 1. Plugging these identities and $p_0=\frac{n}{4}\frac{n}{n-2}\leq n$ back into \eqref{4.5e10} gives
\begin{equation}\label{4.5e11}
\lim_{k\to \infty}\Phi_{k}\leq e^{n-1}A_0^{\frac{2(n-2)}{n}} \left(\beta^{\frac{n}{2}} A_0^{\frac{n-2}{2}}(20n)^{\frac{n}{2}}+\frac{1}{(1-\eta^{-1})T}\right)^{\frac{2(n^2-4)}{n^2}} \Phi_0
\end{equation}

Just as in \cite{ChenEric}, there holds 
\[
\Phi_0 \leq T^{\frac{4(n-2)}{n^2}}\beta^2 \;\text{ and } \; \lim_{k\to \infty}\Phi_k \geq \|f\|_{L^\infty_{g(T)}}.
\]

Using this in \eqref{4.5e11} gives
\begin{align*}
&\sup_{x\in M}|\text{Rm}|^2_{g(T)} =\|f\|_{L^{\infty}_{g(T)}}\\
&\quad \leq e^{n-1}A_0^{\frac{2(n-2)}{n}} \left(\beta^{\frac{n}{2}} A_0^{\frac{n-2}{2}}(20n)^{\frac{n}{2}}+\frac{1}{(1-\eta^{-1})T}\right)^{\frac{2(n^2-4)}{n^2}}T^{\frac{4(n-2)}{n^2}}\beta^2\\
&\quad \leq  2^{\frac{2(n^2-4)}{n^2}}e^{n-1}A_0^{\frac{2(n-2)}{n}}\beta^2 \\
&\qquad\times \max\left\{\left(\beta^{\frac{n}{2}} A_0^{\frac{n-2}{2}}(20n)^{\frac{n}{2}}\right)^{\frac{2(n^2-4)}{n^2}},\left(\frac{1}{(1-\eta^{-1})T}\right)^{\frac{2(n^2-4)}{n^2}}\right\}T^{\frac{4(n-2)}{n^2}}.
\end{align*}
Because $T>0$ was arbitrary, this completes the proof. 
\end{proof}


\begin{cor}\label{c4.6} 
Let $(M^n,g(t))$ satisfy the conditions in \eqref{section4assumptions}. Then $T_{max}=\infty$ and 
\[
\sup_{M\times [0,\infty)}|\text{Rm}|(x,t)<\infty.
\]
\end{cor}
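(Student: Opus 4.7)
The plan is to prove this in two parts: first establish $T_{max}=\infty$, then establish the uniform curvature bound on all of $[0,\infty)$.

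For the first part, I would argue by contradiction. If $T_{max}<\infty$, then Proposition \ref{4.5} applied on $[0,T_{max})$ gives
\[
\sup_{x\in M}|\text{Rm}|^2(x,t)\leq \max\bigl(c_1(g_0)\,t^{-\frac{2(n-2)}{n}},\,c_2(g_0)\,t^{\frac{4(n-2)}{n^2}}\bigr),
\]
which is uniformly finite on $[T_{max}/2,T_{max})$. Combined with the bounded-curvature-on-$[0,T_{max}/2]$ guaranteed by the definition of a bounded curvature Ricci flow in \eqref{section4assumptions}, this would yield $\sup_{M\times[0,T_{max})}|\text{Rm}|<\infty$, contradicting the standard continuation criterion for Ricci flow (a consequence of Shi's theorem), which requires $|\text{Rm}|$ to blow up at a finite maximal existence time. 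Hence $T_{max}=\infty$.

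For the second part, the issue is that the Proposition \ref{4.5} bound grows like $t^{4(n-2)/n^2}$ for large $t$, so it does not directly give a uniform bound on $[0,\infty)$. To remove the growth, I would re-run the Moser iteration from the proof of Proposition \ref{4.5} on the shifted interval $[T-1,T]$ rather than $[0,T]$, replacing the cutoff levels $\tau_k=(1-\eta^{-k})T$ by $\tau_k=T-\eta^{-k}$ so that the time cutoff $\psi$ is supported on $[T-1,T]$. The only ingredients in the iteration are the Sobolev constant $A_0=2L(n)C_{g_0}$ from Corollary \ref{l4.3} and the bound $\beta:=\|\text{Rm}\|_{L^{n^2/[2(n-2)]}_{g_0}}$ from Lemma \ref{l4.4}; both are uniform in $t$ (note that $\beta<\infty$ since $|\text{Rm}|_{g_0}$ is bounded and in $L^{n/2}$ by hypothesis). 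With this shift, the initial Moser quantity becomes
\[
\Phi_0=\left(\int_{T-1}^T\int|\text{Rm}|^{n^2/[2(n-2)]}\,dV_t\,dt\right)^{1/p_0}\leq \beta^2,
\]
independent of $T$, and the iteration closes to give $\sup_{x\in M}|\text{Rm}|^2(x,T)\leq C(n,A_0,\beta)$ uniformly for $T\geq 1$. For $T\in[0,1]$, boundedness is immediate from the bounded-curvature assumption in \eqref{section4assumptions}. Combining these yields the desired $\sup_{M\times[0,\infty)}|\text{Rm}|<\infty$.

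The main obstacle is the technical bookkeeping of re-running the Moser iteration on the shifted interval $[T-1,T]$, verifying that the differential inequality of Lemma \ref{l4.1} and the iteration estimates \eqref{4.5e8}--\eqref{4.5e10} transfer cleanly with the new time cutoff. The crux is the uniform-in-$t$ character of the Sobolev and $L^q$ curvature bounds along the flow established in Corollary \ref{l4.3} and Lemma \ref{l4.4}, which ensures no constants worsen with time and hence the iteration structure carries over directly with identical constants, only changing the choice of time cutoff.
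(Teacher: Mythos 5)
Your proposal is correct and is, to my knowledge, the standard argument that the paper is deferring to when it cites Chen's Corollary 4.6. The two steps are exactly right: $T_{\max}=\infty$ follows from the Proposition \ref{4.5} bound (uniformly finite on any $[0,T_{\max})$ with $T_{\max}<\infty$) together with Shi's continuation criterion; and the uniform-in-time bound follows by re-running the Moser iteration of Proposition \ref{4.5} on $[T-1,T]$ with the shifted levels $\tau_k=T-\eta^{-k}$, which is legitimate because the iteration only consumes the time-uniform Sobolev constant $A_0$ from Corollary \ref{l4.3} and the nonincreasing $L^q$ norm $\beta$ from Lemma \ref{l4.4}, and this shift replaces the offending factor $\Phi_0\leq T^{1/p_0}\beta^2$ by $\Phi_0\leq\beta^2$. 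One small point worth making explicit: the reason one re-runs the iteration rather than simply invoking Proposition \ref{4.5} with $g(T-1)$ as initial data is that the pinching hypothesis in \eqref{section4assumptions} at time $T-1$ would have to be checked against the Sobolev constant $C_{g(T-1)}$, for which one only knows the cruder bound $A_0=2L(n)C_{g_0}$, and this would not immediately verify \eqref{section4assumptions}; bypassing the hypothesis and going straight to the iteration machinery, as you do, avoids this issue cleanly.
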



\begin{proof}
The proof is identical to {\cite[Corollary 4.6]{ChenEric}}.
\end{proof}


\begin{lem}\label{4.7} 
Let $(M^n,g(t))$ satisfy the conditions in \eqref{section4assumptions}. Then for any $\gamma>0$, there exists $T_\gamma>0$ such that for all $t\geq T_\gamma$, there holds 
\[
\left(\int |\text{Rm}|^{\frac{n}{2}\frac{n}{n-2}} dV_t\right)^{\frac{n-2}{n}}\leq \gamma t^{-1}.
\]
\end{lem}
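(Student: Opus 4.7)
The plan is to derive an integral-in-time decay for $F(t):=\left(\int |\text{Rm}|^{\frac{n}{2}\frac{n}{n-2}}\,dV_t\right)^{\frac{n-2}{n}}$ from the evolution of the scale-invariant quantity $\int |\text{Rm}|^{n/2}\,dV_t$, and then combine this with the monotonicity statement from Lemma \ref{l4.4} to pass from integrability in time to the pointwise estimate $tF(t)\to 0$.

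First I would revisit the proof of Lemma \ref{l4.2} and retain the ``good'' negative term that was discarded there. Starting from Lemma \ref{l4.1}\ref{4.1a} with $\alpha=n/4$, applying the Sobolev inequality of Corollary \ref{l4.3} to the gradient term, and then using H\"older's inequality together with the monotonicity bound $\int |\text{Rm}|^{n/2}\,dV_t\leq \int|\text{Rm}|_g^{n/2}\,dV_g$ on the curvature cross-term, the bracket $\left[-\tfrac{2}{A_0}+(4n+n+2n)(\int|\text{Rm}|^{n/2}\,dV_t)^{2/n}\right]$ appearing in \eqref{4.2e2} is strictly negative under our choice of $\delta_1(n)$; call its absolute value $\epsilon_0>0$. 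After the usual cutoff argument with $\phi_r$ and letting $r\to\infty$ (where $\int|\nabla\phi_r|^2|\text{Rm}|^{n/2}\,dV_t\lesssim r^{-2}\cdot\int|\text{Rm}|_g^{n/2}\,dV_g$ vanishes uniformly on $[0,T]$ by Corollary \ref{c4.6} and Proposition \ref{finitenessofn/2}), the resulting differential inequality is
\begin{equation*}
\frac{d}{dt}\int |\text{Rm}|^{\frac{n}{2}}\,dV_t \leq -\epsilon_0\, F(t).
\end{equation*}
For $n=3$ I would instead invoke Lemma \ref{l4.1}\ref{4.1b} and repeat the argument (sending $\beta\to 0$ under the cutoff as in Lemma \ref{l4.2}); the pinching constant $\delta_1(3)<\tfrac{1}{33L(3)}$ ensures the corresponding bracket is strictly negative, yielding the same inequality with a dimensional $\epsilon_0$.

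Next I would integrate from $0$ to $T$. Since $\int|\text{Rm}|^{n/2}\,dV_t\geq 0$, this yields
\begin{equation*}
\epsilon_0\int_0^T F(t)\,dt \;\leq\; \int|\text{Rm}|_g^{n/2}\,dV_g,
\end{equation*}
and letting $T\to\infty$ shows $\int_0^\infty F(t)\,dt <\infty$. Now the exponent $q=\tfrac{n}{2}\tfrac{n}{n-2}$ lies in the range $[\max(2,n/2),\tfrac{n}{2}\tfrac{n}{n-2}]$ covered by Lemma \ref{l4.4}, so $t\mapsto F(t)$ is nonincreasing on $[0,\infty)$. Combining monotonicity with integrability is the standard trick: for every $t>0$,
\begin{equation*}
\frac{t}{2}F(t) \leq \int_{t/2}^{t} F(s)\,ds \leq \int_{t/2}^{\infty}F(s)\,ds \xrightarrow[t\to\infty]{} 0,
\end{equation*}
so $tF(t)\to 0$. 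Given $\gamma>0$, we choose $T_\gamma$ so that the tail integral $\int_{T_\gamma/2}^{\infty}F(s)\,ds<\gamma/2$, which yields $F(t)\leq \gamma/t$ for all $t\geq T_\gamma$, as required.

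The only place where care is needed is the cutoff limit producing the clean differential inequality on the full manifold; this is essentially a repetition of the argument already carried out in Lemma \ref{l4.2} (with the $n=3$ case requiring the extra regularization by $\beta>0$). Once that inequality is in hand, the proof is a one-line consequence of monotonicity plus integrability.
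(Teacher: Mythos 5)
Your proposal is correct and follows essentially the same route as the paper: extract the negative term that was dropped in the proof of Lemma~\ref{l4.2} to obtain an $L^1$-in-time bound on $F(t)=\bigl(\int|\text{Rm}|^{\frac{n}{2}\frac{n}{n-2}}\,dV_t\bigr)^{\frac{n-2}{n}}$, then combine this integrability with the monotonicity of $F$ from Lemma~\ref{l4.4} via the standard $\frac{t}{2}F(t)\leq\int_{t/2}^t F(s)\,ds$ trick. The only cosmetic difference is that the paper integrates the cutoff inequality first and then passes $r\to\infty$, $\epsilon\to 0$, $T\to\infty$ (arriving at the explicit constant $10A$), whereas you pass the cutoff limit first to state a clean differential inequality with a generic $\epsilon_0$; both are sound, and you correctly flag the cutoff limit as the step that needs care.
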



\begin{proof}
Start by fixing $T>0$. Recall that in Lemma \ref{l4.2}, we proved that for any $\epsilon>0$, there exists $r_\epsilon\gg0$ such that whenever $r\geq r_\epsilon$, there holds 
\begin{multline*}
\int \phi_r^2|\text{Rm}|^{\frac{n}{2}}\,dV_T-\int \phi_r^2|\text{Rm}|^{\frac{n}{2}}\,dV_0 \\
\leq -\frac{1}{10A}\int_0^T  \left(\int \phi_r^{\frac{2n}{n-2}}|\text{Rm}|^{\frac{n}{4}\frac{2n}{n-2}}\,dV_t\right)^{\frac{n-2}{n}}\,dV_t+T\epsilon.
\end{multline*}
Rearranging this in turn gives
\begin{align}
\label{4.7e1}
&\int_0^T\left(\int \phi_r^{\frac{2n}{n-2}}|\text{Rm}|^{\frac{n}{2}\frac{n}{n-2}} dV_t\right)^{\frac{n-2}{n}} \,dt \\
\notag &\quad \leq 10A\left[\int \phi_r^2|\text{Rm}|^{\frac{n}{2}}\,dV_0-\int \phi_r^2|\text{Rm}|^{\frac{n}{2}}\,dV_T\right]+10TA \epsilon\\
\notag &\quad \leq  10A\int \phi_r^2|\text{Rm}|^{\frac{n}{2}}\,dV_0+10TA \epsilon\leq 10A\int|\text{Rm}|^{\frac{n}{2}}\,dV_0+10TA \epsilon.
\end{align}
Now take $r\to \infty$, then $\epsilon\to 0$, then $T\to \infty$ to see that 
\[
\int_0^\infty\left(\int_M |\text{Rm}|^{\frac{n}{2}\frac{n}{n-2}} dV_t\right)^{\frac{n-2}{n}} \,dt\leq 10A\int|\text{Rm}|^{\frac{n}{2}}\,dV_0<\infty.
\]
This combined with the fact that $t\mapsto\int |\text{Rm}|^{\frac{n}{2}\frac{n}{n-2}}\,dt$ is nonincreasing (Lemma \ref{l4.4}) gives
\[
\lim_{t\to \infty}t\left(\int |\text{Rm}|^{\frac{n}{2}\frac{n}{n-2}}\,dV_t\right)^{\frac{n-2}{n}} =0.
\]
\end{proof}


\begin{prop}\label{4.8} 
Let $(M^n,g(t))$ satisfy the conditions in \eqref{section4assumptions}. Then the curvature estimate \ref{maincon3} holds.
\end{prop}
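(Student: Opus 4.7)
The plan is to combine the integral $L^q$ decay from Lemma \ref{4.7} with the pointwise curvature bound from Proposition \ref{4.5} via a time-shift (``restart'') argument. Write $q=\tfrac{n}{2}\tfrac{n}{n-2}$ and fix an arbitrary $\epsilon>0$. By Lemma \ref{4.7}, for any $\gamma>0$ there exists $T_\gamma$ such that for all $t\geq T_\gamma$,
\[
\|\text{Rm}\|_{L^q(g(t))}\leq \left(\tfrac{\gamma}{t}\right)^{2/n}.
\]

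For $t\geq 2T_\gamma$, set $T_0=t/2$ and consider the shifted Ricci flow $\bar g(s):=g(T_0+s)$ on $M\times[0,T_0]$. By Corollary \ref{c4.6} it is complete bounded curvature, by Corollary \ref{l4.3} it satisfies the uniform Sobolev inequality with constant $A_0=2L(n)C_{g_0}$, and by Lemma \ref{l4.4} the quantity $\|\text{Rm}\|_{L^q(\bar g(s))}$ is nonincreasing in $s$ with initial value $\beta:=\|\text{Rm}\|_{L^q(\bar g(0))}\leq(\gamma/T_0)^{2/n}$. All structural ingredients of Proposition \ref{4.5} therefore hold for $\bar g$, and reading off its conclusion (specifically \eqref{4.5e11}) at the final time $T_0$ with these inputs yields
\[
\sup_{M}|\text{Rm}|^2(\cdot,t)\leq C(n,A_0)\,\beta^2\max\bigl\{(\beta^{n/2}A_0^{(n-2)/2}(12n)^{n/2})^{2(n^2-4)/n^2},\,T_0^{-2(n^2-4)/n^2}\bigr\}T_0^{4(n-2)/n^2}.
\]

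Substituting $\beta\leq(2\gamma/t)^{2/n}$ and $T_0=t/2$, both branches of the maximum collapse to $\gamma^{\delta}t^{-2}$ with $\delta>0$. In the second branch the $T_0$ exponents combine as $\tfrac{4(n-2)-2(n^2-4)}{n^2}=-2+\tfrac{4}{n}$, which multiplied by $\beta^2\leq\gamma^{4/n}T_0^{-4/n}$ gives $\gamma^{4/n}t^{-2}$; the first branch produces an even higher power of $\gamma$ while again collapsing to $t^{-2}$ via the same cancellation. Hence
\[
t\sup_{M}|\text{Rm}|(\cdot,t)\leq C'(n,A_0)\,\gamma^{2/n}
\]
for all $t\geq 2T_\gamma$. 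Choosing $\gamma$ so small that $C'\gamma^{2/n}<\epsilon$ completes the argument, since $\epsilon>0$ was arbitrary.

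The main obstacle is the exponent bookkeeping: verifying that both branches of the maximum in Proposition \ref{4.5} simplify to the same $t^{-2}$ decay when $\beta\sim t^{-2/n}$ and $T_0\sim t$. The cancellation in the second branch between $\beta^2\sim t^{-4/n}$ and $T_0^{-2+4/n}$ is what converts the integral smallness from Lemma \ref{4.7} into pointwise $o(t^{-1})$ decay. Once the exponents are in hand, the restart itself is routine because bounded curvature, the uniform Sobolev inequality, and $L^q$ monotonicity all transfer from the original flow to $\bar g$ without modification.
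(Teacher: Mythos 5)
Your proof is correct and takes essentially the same approach the paper (and Chen, whose Proposition 4.8 the paper cites verbatim) uses: feed the $L^q$ decay from Lemma \ref{4.7} into the Moser-type iteration of Proposition \ref{4.5} on the time interval $[T/2,T]$, then read off the resulting $\gamma$-dependence of $t\sup_M|\text{Rm}|$. The only cosmetic difference is that you phrase it as a restart of the flow at $T_0=t/2$ while the paper/Chen phrases it as an internal shift of the iteration parameters $\tau_k$ to $\frac{T}{2}+(1-\eta^{-k})\frac{T}{2}$; both implementations use the uniformity of $A_0$ from Corollary \ref{l4.3}, monotonicity of $\|\text{Rm}\|_{L^q}$ from Lemma \ref{l4.4}, and the explicit form \eqref{4.5e11}, and your exponent arithmetic checks out.
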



\begin{proof}
The proof is the same as {\cite[Proposition 4.8]{ChenEric}}. 
\end{proof}


At this point, we diverge from the numbering from \cite{ChenEric} to prove the other curvature estimate and the convergence statement. 


\begin{prop}
Let $(M^n,g(t))$ satisfy the conditions in \eqref{section4assumptions}. Then the curvature estimate \ref{maincon2} holds.
\end{prop}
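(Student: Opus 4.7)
The plan is to establish $|\text{Rm}|(\cdot,t)\leq C_0(n)/t$ by refining the Moser iteration of Proposition \ref{4.5}. Whereas Proposition \ref{4.5} produced a bound depending on the non-scale-invariant $L^q_{g_0}$-norm of curvature (with $q=\tfrac{n^2}{2(n-2)}$), a universal constant $C_0(n)$ will emerge by systematically using the scale-invariant $L^{n/2}$ smallness $\|\text{Rm}\|_{L^{n/2}_{g(t)}}\leq \delta_1(n)/C_g$ provided by Lemma \ref{l4.2}.

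The key technical step is the treatment of the ``bad'' curvature term in the evolution inequality of Lemma \ref{l4.1}(a): by H\"older's inequality followed by the uniform Sobolev inequality from Corollary \ref{l4.3},
\[
\int \phi^2 |\text{Rm}|^{2\alpha+1}\,dV_t
\;\leq\; \|\text{Rm}\|_{L^{n/2}_{g(t)}}\,\|\phi|\text{Rm}|^\alpha\|_{L^{2n/(n-2)}_{g(t)}}^{\,2}
\;\leq\; 2L(n)\delta_1(n)\,\|\nabla(\phi|\text{Rm}|^\alpha)\|_{L^2}^2.
\]
The crucial observation is that the product $A_0\cdot \|\text{Rm}\|_{L^{n/2}_{g(t)}}$ causes the Sobolev constant $C_g$ to cancel (using $A_0=2L(n)C_g$), leaving only the dimensional factor $2L\delta_1 = \tfrac{1}{20n}$. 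This bad term is then absorbed into the favorable gradient term $-2\|\nabla(\phi|\text{Rm}|^\alpha)\|^2$ of Lemma \ref{l4.1}(a), producing an evolution inequality whose Moser iteration depends only on dimensional data.

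Carrying out the iteration as in the proof of Proposition \ref{4.5}, but with the scale-invariant $L^{n/2}$ bound as input datum in place of the $L^q$ norm, yields the universal estimate $t\sup|\text{Rm}|(\cdot,t)\leq C_0(n)$. The specific value $(3e)^{n/2}$ emerges as an upper bound on the accumulated iteration constants of the form $(2L\delta_1)^{\sum_k 1/p_k}\cdot \eta^{\sum_k k/p_k}$, where $\eta=\nu^{n/2}$ and $\nu=1+\tfrac{2}{n}$, evaluated using the sums $\sum 1/p_k = \tfrac{2(n^2-4)}{n^2}$ and $\sum k/p_k = \tfrac{n^2-4}{n}$ computed as in the proof of Proposition \ref{4.5}.

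The main obstacle will be closing the iteration all the way to $L^\infty$. With $\delta_1(n)=1/(40nL(n))$, the coefficient $(16\alpha+n)\cdot 2L\delta_1$ only remains suitably bounded for $\alpha$ up to a dimensional threshold of roughly $\tfrac{19n}{16}$, so the absorption mechanism by itself yields only universal $L^{2\alpha}$ bounds for $\alpha$ below this threshold rather than $L^\infty$. To reach $L^\infty$, I anticipate bootstrapping between these finite-$\alpha$ universal estimates and the pointwise bound from Proposition \ref{4.5} together with the bounded-curvature guarantee of Corollary \ref{c4.6}: interpolating between the scale-invariant $L^{n/2}$ smallness and a local $L^\infty$ bound extends the universal estimates to arbitrarily high $L^p$, and the pointwise bound then follows from letting $p\to\infty$ while tracking that the constants combine to at most $(3e)^{n/2}$.
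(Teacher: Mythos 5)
Your central idea — cancel $C_g$ by pairing the Sobolev constant $A_0 = 2L(n)C_g$ against the scale-invariant smallness $\|\mathrm{Rm}\|_{L^{n/2}} \le \delta_1(n)/C_g$ — is the right instinct, and the paper also exploits exactly this cancellation. But the mechanism you choose (full absorption of the reaction term into the good gradient term via H\"older $+$ Sobolev) does not close, and you have correctly identified why: $(16\alpha + n)\cdot 2L(n)\delta_1(n) = \frac{16\alpha+n}{20n}$ exceeds $2$ once $\alpha \ge \frac{39n}{16}$, so the absorption dies at a fixed dimensional exponent and cannot by itself reach $L^\infty$. The bootstrap you sketch to repair this — interpolating between $L^{n/2}$ smallness and the $L^\infty$ bound from Proposition \ref{4.5} or Corollary \ref{c4.6} — does not produce time decay: Corollary \ref{c4.6} gives a time-independent bound, and Proposition \ref{4.5} gives $t^{-(n-2)/n}$ decay (which is slower than $t^{-1}$ and even grows for large $t$ via the second branch of the max), so no interpolation between them yields the required $C_0(n)/t$ rate, nor the universal constant.

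What you are missing is the actual input the paper feeds into the Moser iteration: it does \emph{not} absorb. It keeps the Young's-inequality split from Proposition \ref{4.5}, so that the iteration constant $K_p$ contains $\beta = \|\mathrm{Rm}\|_{L^q_{g(T/2)}}$ with $q = \frac{n^2}{2(n-2)}$, and then it establishes a \emph{time decay} of $\beta$. The point is that the Lemma \ref{4.7}-type integral estimate gives $\frac{t}{2}\beta^{n/2} \le 10 A \int |\mathrm{Rm}|^{n/2}\,dV_0 \le 10 L(n) C_g (\delta_1/C_g)^{n/2}$, hence $\beta \lesssim_n C_g^{-1} t^{-2/n}$, and therefore $A_0^{(n-2)/n}\beta \le \frac{1}{4n\,t^{2/n}}$ — a bound that is both dimensional (the $C_g$'s cancel) and decaying. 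It is this decay, fed into the iteration constants of Proposition \ref{4.5} shifted to the time window $[T/2, T]$, that makes all the powers of $t$ in the Moser output combine to exactly $t^{-1}$ with the universal coefficient $(3e)^{n/2}$. Your proposal contains no analogue of this decay estimate for an intermediate $L^q$ norm, and without it you cannot bridge from the sub-threshold $L^{2\alpha}$ control to the pointwise $C_0(n)/t$ bound.

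Two minor points: the correct absorption threshold is $\alpha < \frac{39n}{16}$, not $\frac{19n}{16}$; and even in the range where absorption succeeds, the Sobolev inequality still contributes a factor $A_0^{1/(\nu p_k)}$ at each Moser step, so the route to cancelling $C_g$ is not quite as clean as ``the Sobolev constant simply drops out of the reaction term'' — you would still need $\Phi_0$ to carry a compensating $(\delta_1/C_g)^2$, as it does when one starts the iteration at $p_0 = n/4$ using $\int |\mathrm{Rm}|^{n/2}\,dV_t \le (\delta_1/C_g)^{n/2}$.
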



\begin{proof}
By slightly modifying the proof of Proposition \ref{4.5} (see also {\cite[Proposition 4.8]{ChenEric}}) with the only changes being
\[
\tau_k=\frac{T}{2}+(1-\eta^{-k}) \frac{T}{2}, \;\;\text{ and }\; \; \beta=\sup_{t\in [T/2,T]} \||\text{Rm}|\|_{L^q_{g(t)}}, 
\]
we obtain the estimate
\begin{multline}\label{newcurvbound}
\sup_{x\in M}|\text{Rm}|_{g(t)} \leq 
2^{\frac{n^2-4}{n^2}}e^{\frac{1}{2}(n-1)}A_0^{\frac{n-2}{n}}\beta 
 \max\left\{\left(\beta^{\frac{n}{2}} A_0^{\frac{n-2}{2}}(20n)^{\frac{n}{2}} \right)^{\frac{n^2-4}{n^2}},\left(\frac{2}{(1-\eta^{-1})t}\right)^{\frac{n^2-4}{n^2}}\right\}t^{\frac{2(n-2)}{n^2}}.
\end{multline}
Note that Lemma \ref{l4.4} implies that $t\mapsto \int |\text{Rm}|^{q}\,dV_t$ is nonincreasing, so we may take $\beta=\||\text{Rm}|\|_{L^q_{g(T/2)}}$. Now by slightly modifying the proof of Lemma~\ref{4.7} (specifically taking $r\to \infty$ then $\epsilon\to 0$ in inequality \eqref{4.7e1}), we have
\begin{align*}
	\frac{t}{2} \beta^{\frac{n}{2}} &\leq \int_0^{t/2}\left(\int |\text{Rm}|^{\frac{n}{2}\frac{n}{n-2}}\,dV_s\right)^{\frac{n-2}{n}} \,ds\\
					&\leq 10A\int|\text{Rm}|^{\frac{n}{2}}\,dV_0\leq 10 L(n) C_g \left(\frac{\delta_1(n)}{C_g}\right)^{\frac{n}{2}}.
\end{align*}
Then since $A_0=2L(n) C_g$, we can see that 
\[
A_0^{\frac{n-2}{n}}\beta\leq (2L C_g)^{\frac{n-2}{n}}(10 LC_g)^{\frac{2}{n}} \frac{2^{\frac{2}{n}}\delta_1(n)}{C_gt^{\frac{2}{n}}}\leq \frac{1}{4n t^{\frac{2}{n}}}.
\]
Plugging this back into our curvature bound \eqref{newcurvbound} we have
\begin{align*}
\sup_{x\in M}|\text{Rm}|_{g(t)} &\leq 2^{\frac{n^2-4}{n^2}}e^{\frac{1}{2}(n-1)}\frac{1}{4n t^{\frac{2}{n}}}\\
				&\quad \times \max\left\{\left(\frac{(20n)^{\frac{n}{2}}}{(4n)^{\frac{n}{2}} t}\right)^{\frac{n^2-4}{n^2}},\left(\frac{2}{(1-\eta^{-1})t}\right)^{\frac{n^2-4}{n^2}}\right\}t^{\frac{2(n-2)}{n^2}}\\&<
(5e)^{\frac{n}{2}}t^{-\frac{2}{n}-1+\frac{4}{n^2}+\frac{2}{n}-\frac{4}{n^2}}=\frac{(5e)^{\frac{n}{2}}}{t}=\frac{C_0(n)}{t},
\end{align*}
where in the latter inequality we used the fact that 
\[
\frac{2}{(1-\eta^{-1})}\leq 4\leq 5^{\frac{n}{2}}.
\]
\end{proof}


\begin{prop}\label{conv4.10}
Let $(M^n,g(t))$ satisfy the conditions in \eqref{section4assumptions} and furthermore assume that $|\text{Rm}|_g\in L^p$ for some $p<\frac{n}{2}$. Then $g(t)$ converges locally smoothly to a complete flat limit metric $g(\infty)$ on $M^n$ with Euclidean volume growth, and thus $M^n$ is diffeomorphic to $\mathbb{R}^n$. 
\end{prop}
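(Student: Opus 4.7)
The plan is to exploit the sub-critical hypothesis $p<\frac{n}{2}$ via the parabolic rescaling $\tilde g_t(s):=t^{-1}g(ts)$ and to observe that $\int|\widetilde{\text{Rm}}|^p\,dV_{\tilde g_t(1)}=t^{p-n/2}\int|\text{Rm}|^p\,dV_{g(t)}$ shrinks to zero as $t\to\infty$ whenever $\|\text{Rm}\|_{L^p(g(t))}$ stays bounded. Coupled with the Moser iteration of Proposition \ref{4.5}, this yields faster-than-scale-invariant pointwise decay $\sup_M|\text{Rm}|(\cdot,t)\lesssim t^{-1-\sigma}$ with $\sigma=\frac{n}{2p}-1>0$; since $t^{-1-\sigma}$ is time-integrable, the metrics are Cauchy and converge to a complete flat limit, which by the Sobolev inequality is forced to be $\mathbb{R}^n$. (By H\"older interpolation against the finite $L^{n/2}$ norm one may WLOG assume $p\in[1,n/2)$.)

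First I would show the uniform bound $\sup_{t\geq 0}\|\text{Rm}\|_{L^p(g(t))}\leq\|\text{Rm}\|_{L^p(g_0)}$, in direct parallel with Lemma \ref{l4.2}: apply Lemma \ref{l4.1} with $\alpha=p/2$ (invoking the $(|\text{Rm}|^2+\beta)^{p/2}$ regularization from part (b) if $p<2$), use Corollary \ref{l4.3} on the gradient-square term to extract a negative Sobolev contribution, and control the cubic term by H\"older
\[
\int\phi^2|\text{Rm}|^{p+1}\,dV_t\leq \|\text{Rm}\|_{L^{n/2}(g(t))}\Big(\int\phi^{\tfrac{2n}{n-2}}|\text{Rm}|^{\tfrac{pn}{n-2}}\,dV_t\Big)^{\tfrac{n-2}{n}}.
\]
The pinching $\|\text{Rm}\|_{L^{n/2}(g(t))}\leq\delta_1/C_g$ (nonincreasing by Lemma \ref{l4.2}) makes the combined coefficient of $(\int\phi^{\frac{2n}{n-2}}|\text{Rm}|^{\frac{pn}{n-2}}dV_t)^{(n-2)/n}$ negative, so letting $\phi_r\to 1$ gives monotonicity. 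Next I would run the Moser iteration of Proposition \ref{4.5} on the rescaled flow $\tilde g_t(s)$ over $s\in[1/2,1]$, starting from the lower exponent $p_0=p$; since the rescaled $L^p$ norm equals $t^{1-n/(2p)}\|\text{Rm}\|_{L^p(g(ts))}$ and the Sobolev constant is scale-invariant, the iteration yields $\sup_M|\widetilde{\text{Rm}}|(\cdot,1)\leq Ct^{1-n/(2p)}$, which unscales to
\[
\sup_M|\text{Rm}|(\cdot,t)\leq Ct^{-1-\sigma},\qquad \sigma=\tfrac{n}{2p}-1>0.
\]
Shi's local derivative estimates then give $|\nabla^k\text{Rm}|(\cdot,t)\lesssim t^{-1-\sigma-k/2}$ for all $k\geq 0$.

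With $|\partial_t g|=2|\text{Rc}|\leq nCt^{-1-\sigma}$ integrable over $[1,\infty)$, the metrics $g(t)$ are uniformly Cauchy on $M$ and $g(t)/g(1)$ is uniformly bi-Lipschitz; by the derivative estimates they converge locally smoothly to a complete $C^\infty$ Riemannian metric $g(\infty)$ on $M$ with $|\text{Rm}|_{g(\infty)}=0$. The uniform Sobolev inequality \eqref{Ce1} passes to $g(\infty)$ by local smooth convergence and the fact that $R^+_{g(t)}\to 0$ uniformly on compact sets, so $(M,g(\infty))$ is a complete flat manifold admitting an $\mathbb{R}^n$-type Sobolev inequality; the latter is equivalent to Euclidean volume growth by the standard isoperimetric/capacity argument. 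A complete flat Riemannian manifold is isometric to $\mathbb{R}^n/\Gamma$ for some discrete subgroup $\Gamma$ of Euclidean isometries, and any nontrivial $\Gamma$ produces strictly lower-order volume growth; so $\Gamma$ must be trivial and $(M,g(\infty))$ is isometric to $(\mathbb{R}^n,g_E)$, proving that $M$ is diffeomorphic to $\mathbb{R}^n$.

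The main obstacle will be executing the Moser iteration with the lower initial exponent $p_0=p$: one must verify that the iteration constants from Proposition \ref{4.5} (the series $\sum 1/p_k$ and $\sum k/p_k$, which remain finite for any $p_0\geq 1$) combine cleanly with the rescaling factor $t^{1-n/(2p)}$ to produce precisely the power $t^{-1-\sigma}$ rather than an artifact of iteration overhead, and that the smallness used to close the coefficient estimates comes entirely from the scale-invariantly pinched $\|\text{Rm}\|_{L^{n/2}}$ rather than from the merely-finite $L^p$ norm, so that the whole iteration closes with bounds uniform in $t$.
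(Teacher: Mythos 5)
Your overall scheme is correct and produces the right decay exponent $\sigma=\frac{n}{2p}-1$; the endgame (Shi's estimates, $C^\infty_{loc}$ convergence from time-integrable $|\partial_t g|$, flat limit, Sobolev $\Rightarrow$ Euclidean volume growth via the Faber--Krahn type argument, hence trivial $\Gamma$ and $M\cong\mathbb{R}^n$) is essentially identical to the paper's. Where you differ is the \emph{front end}: you package the sub-critical gain as parabolic rescaling plus a Moser iteration started from the $L^p$ level, whereas the paper never rescales. Instead, after interpolating $p$ into the window \eqref{prequirement1}, the paper sets $q=\tfrac{n}{n-2}p$, applies Lemma~\ref{l4.1}\ref{4.1b} \emph{once} with $\alpha=p/2$ to turn the $L^p$ hypothesis into the explicit smoothing decay $\|\mathrm{Rm}\|_{L^q(g(t))}\lesssim t^{-1/p}$, and then runs the Moser iteration of Proposition~\ref{4.5} with starting exponent $\tilde p_0=q/2=\tfrac{n}{2(n-2)}p$, which one checks is always $>1$ in the interpolated range. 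This produces $\sup|\mathrm{Rm}|^2\lesssim t^{-n/p}$ directly, the same power your rescaling bookkeeping yields.

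There is, however, a real gap in the way you run the iteration. You propose starting it ``from the lower exponent $p_0=p$''; since the iterate is on $f=|\mathrm{Rm}|^2$, that means $\alpha=p/2$ at the first step. After WLOG pushing $p$ up toward $n/2$, this is still $<1$ for $n=3,4,5$ (e.g.\ for $n=3$ one has $p<3/2$, so $\alpha<3/4$). But inequality \eqref{4.5e5} and the Claim \eqref{4.5e9} in Proposition~\ref{4.5} — which are what you are iterating — are derived from Lemma~\ref{l4.1}\ref{4.1a}, which needs $\alpha\geq 1$. For $\alpha<1$ you must instead use part~\ref{4.1b} with the $(|\mathrm{Rm}|^2+\beta)^\alpha$ regularization, whose constant $C_1(\alpha)$ degenerates near $\alpha=1/2$; the iteration inequality and the Claim would need to be re-derived in that regime, with care about how the regularization parameter $\beta$ is sent to $0$ inside the double iteration. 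You flag this as ``the main obstacle'' but the fix is not just bookkeeping. The paper's choice $\tilde p_0=q/2$ sidesteps it cleanly, at the cost of first establishing the $L^q$ decay (which is anyway the same information your rescaled $L^p$ smallness encodes, since $t^{1-n/(2q)}\cdot t^{-1/p}=t^{1-n/(2p)}$). A second, smaller point: the ``$\beta$'' feeding the iteration is an $L^q$ norm, not an $L^p$ norm, so you must also pin down which $q$ you are using for the H\"older step in term $\mathbf{II}$ of Proposition~\ref{4.5} and verify, via Lemma~\ref{l4.4}, that this $q$ lies in $[\max\{2,n/2\},\tfrac{n}{2}\tfrac{n}{n-2}]$ so its $L^q$ norm is nonincreasing — the paper checks this explicitly for $q=\tfrac{n}{n-2}p$ with $p$ in \eqref{prequirement1}.
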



\begin{proof}
In this proof, we will use $C(\cdot, \cdot, \cdots)$ to denote a constant which depends on its arguments, and may differ line-by-line. First note that by interpolation, we may assume that 
\begin{equation}\label{prequirement1}
\begin{cases}
\frac{2}{3}<\frac{p}{2}<\frac{n}{4} & n=3,4\\
\frac{n-2}{4}<\frac{p}{2}<\frac{n}{4} & n\geq 5. 
\end{cases}
\end{equation}
Given this (fixed) value of $p$, write $q=\frac{n}{n-2}p$. Now we note that in establishing \eqref{4.5e5} in the proof of Proposition \ref{4.5}, the only assumptions on $\tilde p, \tilde q$ there were $\tilde p\geq 1$ and $\max\{2,n/2\}\leq \tilde q\leq \frac{n}{2}\frac{n}{n-2}$ so that Lemma 4.4 in turn guaranteed that $t\mapsto\||\text{Rm}|\|_{L^{\tilde q}_{g(t)}}$ is nonincreasing.  Thus for any $\tilde p\geq 1$ and our choice of $q$ above, we have
\begin{align*}
&\frac{d}{dt}\int \phi^2f^{\tilde p} \,dV_t+\int |\nabla (\phi f^{\frac{\tilde p}{2}})|^2\,dV_t\\
&\quad \leq \beta(16\tilde p+n)\left(1-\frac{n}{2q}\right)\left(\frac{\beta n(8\tilde p+n)A_0}{2q}\right)^{\frac{n}{2q-n}}\int \phi^2 f^{\tilde p} \,dV_t +\epsilon\\
&\quad = (\tilde p\beta)^{\frac{2q}{2q-n}}C(n,q,C_g)\int \phi^2 f^{\tilde p} \,dV_t +\epsilon.
\end{align*}

Now we continue on in identical fashion to the proof of Proposition \ref{4.5} (in going from \eqref{4.5e5} to \eqref{4.5e10}) with the only modifications being that we take
\[
\tilde p_0=\frac{q}{2},  \; \; \eta=\nu^{\frac{2q}{2q-n}},  \;\; \tau_k=\frac{T}{2}+(1-\eta^{-k}) \frac{T}{2}, \;\;\text{ and }\; \; \beta=\sup_{t\in [T/2,T]} \||\text{Rm}|\|_{L^q_{g(t)}}.
\]
In light of our choice of $p,q$ and Lemma \ref{l4.4}, we in fact have $\beta=\||\text{Rm}|\|_{L^q_{g(T/2)}}$. Also, by our requirements on $p$, we have that $\tilde p_0>1$. The result after iterating is 
\[
\sup_{x\in M}|\text{Rm}|(x,T)^2 \leq C(n,p, C_g) \left(\beta^{\frac{2q}{2q-n}}+T^{-1}\right)^{\sum_{k=0}^\infty\frac{1}{\tilde p_k}}\Phi_0.
\]
Also, in this case we have
\[
\sum_{k=0}^\infty \frac{1}{\tilde p_k}=\frac{1}{\tilde p_0}\sum_{k=0}^\infty \frac{1}{(1+\frac{2}{n})^k}=\frac{n+2}{q}.
\]
Now recalling the definition of $\Phi_0$ and the fact that $\int_M |\text{Rm}|^q\,dV_t$ is nonincreasing in $t$, we have 
\[
\Phi_0=H(q/2,\tau_0)^{\frac{2}{q}}=\left(\int_{T/2}^T \int_M |\text{Rm}|^{q}\,dV_t \,dt\right)^{\frac{2}{q}}= C(q) T^{\frac{2}{q}}\||\text{Rm}|\|_{L^q_{g(T/2)}}^2.
\]
We need to control the value of $\||\text{Rm}|\|_{L^q_{g(T/2)}}$. For this, we proceed just as in Lemma \ref{l4.2} and \ref{l4.4} with $\alpha=\frac{p}{2}$ (here we use the assumption that $\frac{p}{2}>\frac{2}{3}$ so that we can apply part \ref{4.1b} of Lemma \ref{l4.1}) to obtain

\begin{align*}
-\int|\text{Rm}|^{p}\,dV_0&\leq\int_0^t\left(\int_M |\text{Rm}|^{q} dV_s\right)^{\frac{n-2}{n}} \,ds\left[-\frac{1}{4A}+\left(16\alpha+n\right)\frac{\delta_1(n)}{C_g}\right]\\&\leq 
\int_0^t\left(\int_M |\text{Rm}|^{q} dV_s\right)^{\frac{n-2}{n}} \,ds\left[-\frac{1}{4A}+\frac{5n}{39nA}\right]\\&\leq 
-\frac{1}{10A}\int_0^t\left(\int_M |\text{Rm}|^{q} dV_s\right)^{\frac{n-2}{n}} \,ds\leq -\frac{t}{10A}\||\text{Rm}|\|_{L^q_{g(t)}}^{q\frac{n-2}{n}}.
\end{align*}
Of course, this argument relies on the fact that $t\mapsto \int |\text{Rm}|^p_{g(t)}\,dV_t$ is uniformly bounded on finite time intervals which is true by Proposition \ref{finitenessofn/2} because our assumption \eqref{prequirement1} in particular implies $p>1$. So rearranging the above gives 
\[
\||\text{Rm}|\|_{L^q_{g(t)}}^{q\frac{n-2}{n}}\leq C(n,p,C_g,\||\text{Rm}|\|_{L^p_{g_0}}) \frac{1}{t}
\]
which implies
\[
\Phi_0\leq C(n,p,C_g,\||\text{Rm}|\|_{L^p_{g_0}})  T^{\left(-\frac{n}{n-2}+1\right)\frac{2}{q}}=C(n,p,C_g,\||\text{Rm}|\|_{L^p_{g_0}}) T^{-\frac{4}{q(n-2)}}
\]
and 
\[
\beta^{\frac{2q}{2q-n}}\leq C(n,p,C_g,\||\text{Rm}|\|_{L^p_{g_0}})  T^{-\frac{2n}{(n-2)(2q-n)}}.
\]
Note that 
\[
\frac{2n}{(n-2)(2q-n)} \geq 1 \; \iff \; q\leq \frac{n^2}{2(n-2)}
\]
and the latter is true by our choice of $q$. Therefore for $T$ large, we have
\begin{equation}\label{curvestiminf}
\sup_{x\in M}|\text{Rm}|^2(x,T) \leq C(n,p,C_g,\||\text{Rm}|\|_{L^p_{g_0}})  T^{-\frac{n^2}{q(n-2)}}.
\end{equation}
But $q<\frac{n}{2}\frac{n}{n-2}$ and so we have that $\sup_{x\in M}|\text{Rm}|_{g(t)}\lesssim t^{-(1+\sigma)}$ for some $\sigma>0$. Then by Shi's estimates \cite{Shi2} (see also {\cite[Theorem 4.7]{Li}}), we have that for all $k\geq 0$, there holds
\begin{equation}\label{curvestiminf2}
\sup_{x\in M}|\nabla^k \text{Rm}|_{g(t)}\leq C_k t^{-1-\sigma-k/2}
\end{equation}
for all $t\in (0,\infty)$ and some $C_k(n,p,C_g,\||\text{Rm}|\|_{L^p_{g_0}}) >0$.\\

We now show that the integrability in $t$ of the RHS of \eqref{curvestiminf2} implies that $g(t)$ converges in $C^\infty_{loc}(M)$ as $t\to \infty$ to a complete flat metric $g(\infty)$ on $M$ which is equivalent to $g(0)$.  This is by a standard Ricci flow argument which we now sketch.  In the proceeding paragraphs, the constants $C, C_i, c_i$ can be taken to depend only on $n,p,C_g,\||\text{Rm}|\|_{L^p_{g_0}}$.\\

Let $V\in T_p M$ denote an arbitrary unit vector on $M$ with respect to the metric $g(1)$. Then for any $t \geq 1$ we have 
\begin{align*}
	\log \|V\|_{g(t)}^2&=\log \|V\|_{g(t)}^2-\log \|V\|_{g(1)}^2\\
			   &=\int_1^t \frac{d}{dt}\log \|V\|_{g(t)}^2\,dt= 2\int_1^t \frac{\text{Rc}_{g(t)}(V,V)}{\|V\|^2_{g(t)}}\,dt
\end{align*}
while the integrand on the RHS is bounded in absolute value by $n\sup_M |Rm(x, t)|$ and thus $C/t^{1+\sigma}$ for some $\sigma >0$ by \eqref{curvestiminf2} with $k=0$.  It follows that $\log \|V\|_{g(t)}^2$ converges uniformly as $t\to \infty$ to a positive limit  bounded uniformly  above and away from $0$, where the uniformity is relative to $p$.  From this we conclude that $g(t)$ converges in $L^{\infty}(M)$ as $t\to \infty$ to a metric $g(\infty)$ which is uniformly equivalent to $g(0)$, and thus also complete.\\

We show that in fact  $g(t)\to g(\infty)$ in $C^\infty_{loc}(M)$ and thus
$g(\infty)$ is a smooth flat metric on $M$ equivalent to $g(0)$. Fix a local
coordinate domain $U\subset \subset M$ with coordinates $x^i$, and use
$\tilde{T}$ to denote the local components in $U$ of a global tensor $T$ on
$M$.  Recall that if $\tilde{\Gamma}(t)$ are the Christoffel symbols of
$\tilde{g}(t)$, then $\frac{\partial}{\partial t} \tilde{\Gamma}(t)$ are the
components of a global tensor $g^{-1} * \nabla Rc$ on $M$ for each $t$.
Combining the uniform equivalence of $g(t)$ shown above with
\eqref{curvestiminf2} for $k=1$ we obtain
\begin{equation}\label{EE!!!}
\frac{\partial}{\partial t} \tilde{\Gamma}(t) \leq C_1/t^{1+c_1}.
\end{equation}
Thus $\tilde{\Gamma}(t)$ remains uniformly bounded in $U$ for all $t$.  Now if
we let $\partial$ denote partial derivatives in the coordinates $x^i$, then in
$U$ we have
\begin{equation}\label{EEE!!!!!}
\frac{\partial}{\partial t} \partial \tilde{g}=-2\partial \tilde{Rc}
=-2(\widetilde{\nabla Rc} + \tilde{\Gamma}*\tilde{Rc} )\leq C_2/t^{1+c_2}
\end{equation}
by the boundedness of $\tilde{\Gamma}(t)$ and again the uniform equivalence of
$g(t)$ shown above and \eqref{curvestiminf2} for $k=0, 1$.  From this we
conclude that $\partial \tilde{g}(t)$ converges to a bounded limit on $U$ as
$t\to \infty$.\\

By applying a partial derivative $\partial$ to \eqref{EEE!!!!!}, we may express $\frac{\partial^2}{\partial t^2} \partial \tilde{g}$ in terms of $\widetilde{\nabla^m Rc}$ for $m=0, 1, 2$ and $\frac{\partial^l}{\partial t^l} \tilde{\Gamma}(t)$ for $l=0, 1$ and we can similarly show these terms are bounded by $ C_3/t^{1+c_3}$ from which we conclude that $\partial^2 \tilde{g}(t)$ converges to a bounded limit in $U$ as $t\to \infty$.  Continuing inductively in this way, we can show that  $\partial^m \tilde{g}(t)$ converges to a bounded limit in $U$ as $t\to \infty$ for each $m=3, 4, ...$.  As $U$ was an arbitrary coordinate chart, this establishes that $g(t)$ converges in $C^{\infty}_{loc}(M)$ as $t\to \infty$ to a smooth limit $g(\infty)$ which is equivalent to $g(0)$ (and thus is necessarily complete) and must also be flat by \eqref{curvestiminf2}.\\

We have so far shown that $(M,g(\infty))$ is a complete flat manifold. In particular, its universal cover is $\mathbb{R}^n$ with the Euclidean metric $g_E$ and therefore 
\[
(M,g(\infty)) \cong (\mathbb{R}^n, g_E)/ \Gamma
\]
where $\Gamma$ is a discrete group of isometries which acts freely on $\mathbb{R}^n$. On the other hand, we know that $g(\infty)$ satisfies the Sobolev inequality 
\[
\left(\int_M |u|^{\frac{2n}{n-2}}\,dV_{g(\infty)}\right)^{\frac{n-2}{n}}\leq A_0\int_M |\nabla u|^2\,dV_{g(\infty)}
\]
for all $u\in W^{1,2}(M, g(\infty))$ (this follows because the above inequality holds for all $t>0$ and $g(t)\to g(\infty)$ uniformly in $L^\infty(M)$). So for any fixed domain $\Omega\subset M$ and any $u\in W^{1,2} (\Omega, g(\infty))$, we apply H\"older's inequality to see
\[
\int_\Omega |u|^2 \leq \text{Vol}(\Omega)^{\frac{2}{n}}\left(\int_\Omega |u|^{\frac{2n}{n-2}}\,dV\right)^{\frac{n-2}{n}}\leq A_0\text{Vol}(\Omega)^{\frac{2}{n}}\int_\Omega |\nabla u|^2
\]
and subsequently
\[
\text{Vol}(\Omega)^{\frac{2}{n}}\inf_{u\in W^{1,2}(\Omega, g(\infty))}\frac{\int_\Omega|\nabla u|^2}{\int_\Omega u^2}\geq \frac{1}{A_0}.
\]
Now by {\cite[Proposition 2.4]{Carron}} (or for a proof in English, see {\cite[Lemma 6.1]{Ye}}), we have
\[
\text{Vol}_{g(\infty)}(B_{g(\infty)}(x_0,\rho))\geq \left(\frac{1}{2^{n+2}A_0}\right)^{\frac{n}{2}}\rho^n
\]
for any $\rho>0$. That is to say, $(M,g(\infty))$ has a positive asymptotic volume ratio. Thus if $f(x)=Ax+b$ is an arbitrary element of $\Gamma$ (which must consist of affine maps), the above volume estimate implies $b=0$ and thus $A=1$ since $f$ must be either trivial or fixed point free.  Thus $\Gamma$ must be trivial. This completes the proof. 
\end{proof}

This completes the proof of Theorem \ref{main}. 


\section{Proof of Theorem \ref{main2}}\label{secproofmain2}


In this section we will prove Theorem \ref{main2}, which follows from Theorem \ref{main} and a diagonal argument using Hamilton's Compactness Theorem for Ricci flows.


\begin{proof}[Proof of Theorem \ref{main2}]
Let $(M_i, g_i)$ and $(M, g)$ be as in the Theorem. The hypothesis that 
\[
\sup_{i\in \mathbb{N}}\left(\int_{M_i} |\text{Rm}|_{g_i}^{\frac{n}{2}}\,dV_{g_i}\right)^{\frac{2}{n}}\leq \frac{\delta(n)}{\sup_{i\in \mathbb{N}}C_{g_i}}
\]
implies either that (a) $|\text{Rm}|_{g_i}\equiv 0$ for all $i\in\mathbb{N}$ or else (b) that $\sup_{i\in \mathbb{N}}C_{g_i}<\infty$. In the former case, we must necessarily have that $|\text{Rm}|_g\equiv 0$ since $\phi_i^*g_i\to g$ locally smoothly and the result follows. For the latter case, we know by Theorem \ref{main} that for each $i\in\mathbb{N}$, there exists a long-time complete Ricci flow $g_i(t)$ with $g_i(0)=g_i$. Moreover, the $\{g_i(t)\}$ uniformly satisfy the curvature estimate
\begin{equation}\label{c/tcurvbound11}
\sup_{x\in M_i}|\text{Rm}|_{g_i(t)}\leq \frac{C_0(n)}{t}
\end{equation}
for all $t\in (0,\infty)$. To obtain the flow $g(t)$, we may apply a diagonal argument to the flows $g_i(t)$ along with the Arzel\`a-Ascoli Theorem. We will briefly sketch this argument for the reader's convenience. \\

For any large $R\gg 0$ we may take some $K_R<\infty$ for which 
\[
\sup_{x\in B_g(x_0,2R)}|\text{Rm}|_g\leq K_R.
\]
Since $\phi_i^*g_i\to g$ smoothly on this compact set, we may assume $i$ is large enough to ensure that 
\[
\sup_{x\in B_g(x_0,2R)}|\text{Rm}|_{\phi_i^* g}\leq 2K_R.
\]
Now the uniform curvature bounds along the flow in \eqref{c/tcurvbound11} as well as the uniform time-$0$ curvature bound we just obtained implies (by {\cite[Corollary 3.2]{Chen}}) that 
\[
\sup_{B_{g}(x_0,R)\times [0,\infty)}|\text{Rm}|_{\phi^*_i g_i(t)}\leq 2C(n)K_R.
\] 
Similarly, higher order uniform curvature bounds on compact sets may be obtained by Shi's modified interior estimates {\cite[Theorem 14.16]{Chowetal}}. The existence of the limiting smooth Ricci flow $g(t)$ on $M\times [0,\infty)$ – with the desired estimates –  subsequently follows by a diagonal argument. More precisely, by taking any choice of sequences $R_k,T_k\to \infty$, one may inductively apply the Arzel\`a-Ascoli Theorem to construct (for each $k$) a subsequence $(g_{k_j})_j$, which is a subsequence of the previous $(g_{(k-1)_j})_j$ and is chosen so that $(g_{k_j}(t))_j$ converges in the $C^k$-norm on the compact space-time set $B_{g}(x_0,R_k)\times [0,T_k]$. As a result, the diagonal sequence of flows $(g_{k_k}(t))_k$ converges to a limiting flow $g(t)$ on $M\times [0,\infty)$. \\

Thus, up to a subsequence, we have the convergence 
\[
\phi_{i}^*g_{i} (t)\xrightarrow{C^{\infty}_{loc}(M\times[0, \infty))}{} g(t).
\]
Thus the limiting flow $g(t)$ also satisfies the same curvature estimate \eqref{c/tcurvbound11}. The completeness of the limiting flow $g(t)$ follows from the Shrinking Balls Lemma {\cite[Corollary 3.3]{ST}} because the $C_0/t$ curvature bound \eqref{c/tcurvbound11} descends to the limit flow $g(t)$,  and $g(0)=g$ is assumed to be complete.\\

The convergence statement follows from the proof of Proposition \ref{conv4.10} because the estimates obtained there only depend on $n,p, C_{g_i}$ and $\||\text{Rm}|\|_{L^p_{g_i}}$ which are all assumed to be uniform with respect to $i\in\mathbb{N}$. Thus the estimates \eqref{curvestiminf} and \eqref{curvestiminf2} pass down to the limit $g(t)$. This gives the uniform convergence of $g(t)$ to a flat metric $g(\infty)$ on $M$. The completeness of the limiting metric, and subsequently the topological statement about $M$, are then identical to the proof of Proposition \ref{conv4.10}.  
\end{proof}


\begin{appendix}
\section{Obtaining a bound for $L(n)$}\label{secconstantcalc}


Here we will be concerned with calculating the constants $A$ and $L(n)$ which appear in Theorem \ref{C}. In the proof of that theorem, we establish that for any $t\in [0,T]$ and any $\sigma>0$, there holds the following log-Sobolev inequality:
\begin{align}\label{appendixlogsob}
	\int u^2 \log u^2\,dV_t &\leq 
\sigma \left(\int |\nabla u|^2+\frac{R^+_{t}}{4}\,dV_t\right)-\frac{n}{2}\log \sigma \\
	\notag 			&\quad +\frac{n}{2}\left(\log C_g+\log n-1\right).
\end{align}
As described in Section \ref{sobalongflow}, we can now fix a time $t\in [0,T]$ in this log-Sobolev inequality and use regularity properties of the (fixed metric) heat operator $H=-\Delta_{g(t)}+R_{g(t)}^+/4$ to obtain a Sobolev inequality at time $t$. We will track through the proof presented by Ye {\cite[Appendix C]{Ye}} to calculate the constants in this process. Although Ye restricts to compact manifolds, the proof is verbatim (with the same constants) for bounded domains $\Omega\subset M$ and Dirichlet boundary conditions (see {\cite[Appendix B]{ChenThesis}}). \\

For fixed $t\in [0,T]$, we use \eqref{appendixlogsob} as the hypothesis for {\cite[Theorem 5.3]{Ye}} with 
\[
\beta(\sigma)=-\frac{n}{2}\log \sigma +\frac{n}{2}\left(\log C_g+\log n-1\right).
\]
The output of this theorem is that 
\[
\|e^{-sH}u\|_{L^\infty}\leq e^{\tau(s)}\|u\|_{L^2}
\]
where
\begin{align*}
\tau(s)=\frac{1}{2s}\int_0^s \beta(\sigma)\,d\sigma&=
-\frac{n}{2}\frac{1}{2s}(s\log s -s)+\frac{n}{4}\left(\log C_g+\log n-1\right)\\&=
-\frac{n}{4}\log s+\frac{n}{4}\left(\log C_g+\log n\right)\\
										&=
-\frac{n}{4}\log s+\frac{n}{4}\log (nC_g).
\end{align*}
Then applying {\cite[Theorem 5.4]{Ye}} with $c=(nC_g)^{\frac{n}{4}}$, $\mu=n$, and tracking through the constants in the proof, we obtain
\[
\|u\|_{L_{g(t)}^{\frac{2n}{n-2}}}^2\leq K^2C_{p_0}C_{p_1}\left(\int |\nabla u|^2+\frac{R^+_t}{4}u^2\,dV_t\right).
\]
So we can take $A= K^2C_{p_0}C_{p_1}$ where $K$ and $C_{p_i}$ are as follows. Here $K$ is the constant in the Marcinkiewicz Interpolation Theorem, which we will obtain a bound for later (see Theorem \ref{Mark}). We will take $p_0\in (1,2)$ to be variable (to be optimized later), with $p_1$, $q_0$, $q_1$ defined by
\[
\frac{1}{p_0}+\frac{1}{p_1}=1, \;\; \text{ and } \; \; q_i=\frac{np_i}{n-p_i}.
\] 
The constants $C_{p_i}$ are then given generally as
\[
C_p=\left(2^{\frac{p(2n-p)}{n-p}}c_p^{\frac{p^2}{n-p}}\Gamma\left(\frac{1}{2}\right)^{-p}\right)^{\frac{1}{q}},
\]
where 
\[
c_p=\Gamma\left(\frac{1}{2}\right)^{-1} \left(2^{\frac{n}{2}}c^2\right)^{\frac{1}{p}} \frac{2p}{n-p}. 
\]
Calculating each of the terms involved in $C_p$, we have
\begin{align*}
&\left(2^{\frac{p(2n-p)}{n-p}}2^{\frac{n}{2p}\frac{p^2}{n-p}}\right)^{\frac{n-p}{np}}=2^{\frac{2n-p}{n}}2^{\frac{1}{2}}=2^\frac{5n-2p}{2n}, \;\; \text{ and } \\
&\left(\Gamma\left(\frac{1}{2}\right)^{-\frac{p^2}{n-p}}\Gamma\left(\frac{1}{2}\right)^{-p}\right)^{\frac{n-p}{np}}=\sqrt{\pi}^{-1}.
\end{align*}
Recall $c=(nC_g)^{\frac{n}{4}}$, so that
\[
c^{\frac{2}{p}\frac{p^2}{n-p}\frac{n-p}{np}}=(nC_g)^{\frac{n}{4}\frac{2}{p}\frac{p^2}{n-p}\frac{n-p}{np}}=(nC_g)^{\frac{1}{2}}.
\]
Therefore
\begin{align}
\label{mark1}
A&=K^2 C_{p_0}C_{p_1}\\&= 
K^2\pi^{-1} 2^{\frac{5n-2p_0}{2n}+\frac{5n-2p_1}{2n}}(nC_g)\left(\frac{2p_0}{n-p_0}\right)^{\frac{p_0}{n}}\left(\frac{2p_1}{n-p_1}\right)^{\frac{p_1}{n}}\nonumber\\&=
C_g K^2\left[\frac{n}{\pi}  2^5\left(\frac{p_0}{n-p_0}\right)^{\frac{p_0}{n}}\left(\frac{p_1}{n-p_1}\right)^{\frac{p_1}{n}}\right].\nonumber
\end{align}

Now we need to compute a bound for the constant $K$ which, as noted above, comes from an application of the off-diagonal Markinciwicz Interpolation Theorem (specifically, we require $C_0^{1-t}C_1^t$ dependance in the strong-type estimate where $C_0,C_1$ are the weak-type constants). There are many citations for this particular result, for instance a proof in a more general setting is given in {\cite[Theorem 1.4.19]{Grafakos}} and the upper bound that is obtained after plugging in our constants is something like $K\approx 1000*2^{12} n$.  Through an argument communicated to the authors by Espinoza \cite{Mittens}, one can obtain much sharper bounds by combining the ideas in \cite{Folland} and \cite{Grafakos}.  We duplicate this argument here for the readers' convenience. 


\begin{thm}[\cite{Mittens} Off-Diagonal Markinciwicz Interpolation Theorem with Improved Constant]\label{Mark}
Suppose that $(X,\mu)$ and $(Y,\nu)$ are $\sigma$-finite measure spaces and that $p_0,p_1,q_0,q_1\in [1,\infty]$ such that $p_0<p_1$, $p_0< q_0$, $p_1< q_1$, $q_0< q_1$, and define for $t\in (0,1)$, $p_t$ and $q_t$ by
\begin{equation}\label{pqrelationships}
\frac{1}{p_t}=\frac{1-t}{p_0}+\frac{t}{p_1}, \;\; \frac{1}{q_t}=\frac{1-t}{q_0}+\frac{t}{q_1}.
\end{equation}
If $T: L^{p_0}(X,\mu)+L^{p_1}(X,\mu)\to L_0(\nu)$ is a sub-additive operator which is weak type $(p_i,q_i)$ with constant $C_i$, then $T$ is strong type $(p_t,q_t)$ with constant
\[
C_t=2\left(\frac{q_t}{q_t-q_0}+\frac{q_t}{q_1-q_t}\right)^{\frac{1}{q_t}}C_0^{1-t}C_1^t.
\]
\end{thm}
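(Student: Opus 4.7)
Plan. My proof of Theorem~\ref{Mark} will follow the classical real-variable route for Marcinkiewicz interpolation, the only new ingredient being a careful choice of truncation level at the end that produces the geometric mean $C_0^{1-t}C_1^{t}$ rather than some weaker combination of the two weak-type constants. First I would use the layer-cake identity
\[
\|Tf\|_{q_t}^{q_t} = q_t\int_0^\infty \alpha^{q_t-1}\,\nu(\{|Tf|>\alpha\})\,d\alpha,
\]
and for each $\alpha>0$ decompose $f = g_\alpha + h_\alpha$ with $g_\alpha := f\chi_{\{|f|>A\alpha^{\theta}\}}$ and $h_\alpha := f-g_\alpha$, where $A>0$ and $\theta>0$ are parameters to be fixed later. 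Sub-additivity of $T$ gives $|Tf|\le |Tg_\alpha|+|Th_\alpha|$, so $\nu(\{|Tf|>\alpha\})$ splits into $\nu(\{|Tg_\alpha|>\alpha/2\})+\nu(\{|Th_\alpha|>\alpha/2\})$, and applying the two weak-type hypotheses separately produces two integrals $I_0$ and $I_1$ governed by $C_0$ and $C_1$ respectively.

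Next I would evaluate $I_0$ and $I_1$ by expanding the $L^{p_i}$ norms of $g_\alpha$ and $h_\alpha$ via their cutoff definitions and swapping the order of integration, with the support conditions $\{|f|>A\alpha^{\theta}\}$ and $\{|f|\le A\alpha^{\theta}\}$ converting the inner $\mu$-integrals into power-function integrals in $\alpha$ over half-lines depending on $|f|$. The exponent $\theta$ must be chosen so that the resulting moment of $|f|$ equals $p_t$ for \emph{both} integrals; this is consistent precisely because of the two defining relations in \eqref{pqrelationships}. The convergence at $0$ and $\infty$ of the $\alpha$-integrals then produces the prefactors $q_t/(q_t-q_0)$ and $q_t/(q_1-q_t)$. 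After this computation the bound takes the form
\[
\|Tf\|_{q_t}^{q_t} \le 2^{q_t}\bigl[\, K_0 C_0^{q_0} A^{-a}\,+\,K_1 C_1^{q_1} A^{b}\bigr]\,\|f\|_{p_t}^{p_t}
\]
with $a,b>0$ and $K_0,K_1$ depending only on the indices through the prefactors above.

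The final step is optimization in $A$. The minimizer equates the two summands, and a short algebraic computation using both relations in \eqref{pqrelationships} simultaneously collapses the $A$-dependence into $C_0^{q_t(1-t)}C_1^{q_t t}$; taking $q_t$-th roots then yields the stated $C_t$. The case $q_1=\infty$ requires only a small modification: one chooses $A,\theta$ so that $\|Th_\alpha\|_\infty\le \alpha/2$ holds pointwise, forcing $I_1$ to vanish identically, with the prefactor $q_t/(q_1-q_t)$ interpreted as $0$ in the limit.

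Main obstacle: The analytical steps (layer-cake, sub-additive splitting, Fubini) are routine. The delicate point is the algebraic bookkeeping needed to verify that the minimization in $A$, combined with the two relations in \eqref{pqrelationships}, produces \emph{exactly} the geometric mean $C_0^{1-t}C_1^{t}$ together with the sharp prefactor $\bigl[q_t/(q_t-q_0)+q_t/(q_1-q_t)\bigr]^{1/q_t}$, rather than a weaker combination such as $C_0+C_1$ or $\max(C_0,C_1)$. This is exactly the refinement communicated by Espinosa that sharpens the constant compared with the treatment in \cite{Grafakos}.
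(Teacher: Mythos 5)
Your outline follows the same real-variable Marcinkiewicz route as the paper's proof (layer-cake, $\alpha$-dependent truncation of $f$, weak-type bounds on the two pieces, and optimization in the truncation parameter to produce the geometric mean $C_0^{1-t}C_1^t$ with the sharp prefactor). However, there is one genuine gap: you list ``Fubini'' as the tool for interchanging the $\alpha$- and $\mu$-integrations, and that does not suffice here. In the off-diagonal case $p_i < q_i$ the weak-type bound on each piece gives you
\[
I_0 \;\lesssim\; \int_0^\infty \alpha^{q_t-q_0-1}\Bigl(\int_{\{|f|>A\alpha^\theta\}}|f|^{p_0}\,d\mu\Bigr)^{q_0/p_0}\,d\alpha,
\]
and since $q_0/p_0>1$ the inner $\mu$-integral is raised to a power strictly greater than one, so Fubini--Tonelli alone cannot convert this into a bound of the form $K_0\,\|f\|_{p_t}^{p_t}$. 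The correct tool is Minkowski's integral inequality applied after raising to the power $p_0/q_0$, and this is exactly where the hypothesis $p_i<q_i$ (rather than just $p_i\le q_i$) enters; in the diagonal case $p_i=q_i$ your Fubini step would be fine, but not here. Once you replace ``Fubini'' with Minkowski's integral inequality, the rest of your algebraic bookkeeping --- choosing $\theta$ so both moments of $|f|$ come out as $p_t$, which is consistent by \eqref{pqrelationships}, and then optimizing over $A$ so that the two summands balance --- matches the paper's argument and does yield the stated $C_t$. The $q_1=\infty$ modification you sketch is a sensible addition, though the paper's application only needs finite exponents.
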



\begin{proof}[Proof (Duplicated):]
Fix some $t\in (0,1)$ and, for ease of notation, denote $p=p_t$ and $q=q_t$. Let $f\in L^p(X,\mu)$ be normalized so that $\|f\|_p=1$. We will show that 
\begin{equation}\label{markobjec}
\|Tf\|_q\leq 2\left(\frac{q}{q-q_0}+\frac{q}{q_1-q}\right)^{\frac1q}C^{1-t}_0C^t_1.
\end{equation}
Equations \eqref{pqrelationships} imply that
\begin{equation}\label{commonvaluepq}
\frac{p_0(q-q_0)}{q_0(p-p_0)}=\frac{\frac1p\big(\frac{1}{q_0}-\frac{1}{q}\big)}{\frac1q\big(\frac1{p_0}-\frac1p\big)}=
\frac{\frac1p\big(\frac1q-\frac1{q_1}\big)}{\frac1q\big(\frac1p-\frac1{p_1}\big)}=\frac{p_1(q_1-q)}{q_1(p_1-p)}.
\end{equation}
Let us denote by $\sigma$ this common value. For arbitrary $\alpha>0$ and a selected $\epsilon>0$ (to be determined later) we have 
\[
f=f\chi_{\{|f|>\alpha^\sigma\epsilon\}}+ f\chi_{\{|f|\leq \alpha^\sigma\epsilon\}}=:g+h.
\]
Notably, $g\in L^{p_0}(\mu)$ and $h\in L^{p_1}(\mu)$. By the $(p_0,q_0)$ and $(p_1,q_1)$ weak-type estimates, we have
\begin{align*}
\nu(|Tf|>\alpha)&\leq \nu(|Tg|>\alpha/2)+\nu(|Th|>\alpha/2)\\&\leq 
\left(\frac{2C_0}{\alpha}\right)^{q_0}\left(\int_{\{|f|>\alpha^\sigma\epsilon\}}|f|^{p_0}\,d\mu\right)^{\tfrac{q_0}{p_0}}\\
		&\quad +\left(\frac{2C_1}{\alpha}\right)^{q_1}\left(\int_{\{|f|\leq \alpha^\sigma\epsilon\}}|f|^{p_1}\,d\mu\right)^{\frac{q_1}{p_1}}.
\end{align*}
Consequently, there holds
\begin{align*}
\|Tf\|^q_q&=q\int^\infty_0\alpha^{q-1}\nu(|Tf|>\alpha)\,d\alpha\\&\leq 
q(2C_0)^{q_0}\int^\infty_0\alpha^{q-q_0-1}\Big(\int_{\{|f|>\alpha^\sigma\epsilon\}}|f|^{p_0}\,d\mu\Big)^{\tfrac{q_0}{p_0}}\,d\alpha\quad \\&\;\;\;\;+
q(2C_1)^{q_1}\int^\infty_0\alpha^{q-q_1-1}\Big(\int_{\{|f|\leq\alpha^\sigma\epsilon\}}|f|^{p_1}\,d\mu\Big)^{\tfrac{q_1}{p_1}}\,d\alpha.
\end{align*}
The assumptions of $p_j<q_j$ and the $\sigma$-finiteness of the spaces involved allows for us to apply Minkowski's inequality to each of the two integrals on the RHS. This gives
\begin{align*}
&\left(\int^\infty_0\alpha^{q-q_0-1}\left(\int_{\{|f|>\alpha^\sigma\epsilon\}}|f|^{p_0}\,d\mu\right)^{\tfrac{q_0}{p_0}}\,d\alpha\right)^{\frac{p_0}{q_0}} \\
&\quad \leq
\int_X|f|^{p_0}\left(\int^{\frac{|f|^{1/\sigma}}{\epsilon^{1/\sigma}}}_0\alpha^{q-q_0-1}\,d\alpha\right)^{\frac{p_0}{q_0}}\,d\mu\\
&\quad =
\frac{\epsilon^{-\frac{q-q_0}{\sigma}\frac{p_0}{q_0}}}{(q-q_0)^{\frac{p_0}{q_0}}}\int_X|f|^{p_0} |f|^{\frac{q-q_0}{\sigma}\frac{p_0}{q_0}}\,\mu\\
&\quad =
\frac{\epsilon^{-(p-p_0)}}{(q-q_0)^{\frac{p_0}{q_0}}}\int_X|f|^p\,d\mu
\end{align*}
where the last equality follows from \eqref{commonvaluepq}. Similarly
\begin{align*}
\left(\int^\infty_0\alpha^{q-q_1-1}\left(\int_{\{|f|\leq\alpha^\sigma\epsilon\}}|f|^{p_1}\,d\mu\right)^{\tfrac{q_1}{p_1}}\,d\alpha\right)^{\frac{p_1}{q_1}} 
\leq\frac{\epsilon^{p_1-p}}{(q_1-q)^{\frac{p_1}{q_1}}}\int_X|f|^p\,d\mu.
\end{align*}
Putting things together, we obtain that  for all $f\in L^p(\mu)$ with $\|f\|_p=1$, there holds
\[
\|Tf\|^q_q\leq q\left(\frac{(2C_0)^{q_0}\epsilon^{-\frac{q_0}{p_0}(p-p_0)}}{q-q_0} + \frac{(2C_1)^{q_1}\epsilon^{\frac{q_1}{p_1}(p_1-p)}}{q_1-q}\right).
\]
Now we choose $\epsilon>0$ so that 
\begin{equation}\label{defnofdeltamark}
(2C_0)^{q_0}\epsilon^{-\frac{q_0}{p_0}(p-p_0)}= (2C_1)^{q_1}\epsilon^{\frac{q_1}{p_1}(p_1-p)}
\end{equation}
which subsequently yields
\begin{equation}\label{almostlaststepmark}
\|Tf\|^q_q\leq \left(\frac{q}{q-q_0}+\frac{q}{q_1-q}\right)(2C_1)^{q_1}\epsilon^{\frac{q_1}{p_1}(p_1-p)}.
\end{equation}
Equation \eqref{defnofdeltamark}, along with \eqref{commonvaluepq},  implies that
\[
\frac{(2C_0)^{q_0}}{(2C_1)^{q_1}}=\epsilon^{\frac{q_1}{p_1}(p_1-p)+\frac{q_0}{p_0}(p-p_0)}=\epsilon^{\frac{q_1-q}{\sigma}+\frac{q-q_0}{\sigma}}=\epsilon^{\frac{q_1-q_0}{\sigma}}.
\]
Using this value of $\epsilon$, our definition of $\sigma$ from \eqref{commonvaluepq}, and our assumption on $p_i, q_i$ as in \eqref{pqrelationships} yields
\begin{align*}
	(2C_1)^{q_1}\epsilon^{\frac{q_1}{p_1}(p_1-p)}&=(2C_1)^{q_1}\left(\frac{(2C_0)^{q_0}}{(2C_1)^{q_1}}\right)^{\sigma\frac{q_1(p_1-p)}{p_1(q_1-q_0)}}=
(2C_1)^{q_1}\left(\frac{(2C_0)^{q_0}}{(2C_1)^{q_1}}\right)^{\frac{q_1-q}{q_1-q_0}}\\
						     &=
(2C_0)^{q\frac{\frac{1}{q}-\frac{1}{q_1}}{\frac{1}{q_0}-\frac{1}{q_1}}}(2C_1)^{q_1-q_1\frac{q_1-q}{q_1-q_0}}=
(2C_0)^{q(1-t)}(2C_1)^{qq_1\left(\frac{1}{q}-\frac{(1-t)}{q_0}\right)}\\
						     &=
2^qC_0^{q(1-t)}C_1^{qt}.
\end{align*}
Putting this back into \eqref{almostlaststepmark} and taking $q$'th roots on both sides gives \eqref{markobjec} and completes the proof. 
\end{proof}


In our setting, we are applying Theorem \ref{Mark} with our given $p_i$, $q_i$ and with $t=\frac{1}{2}$. Thus we have 
\[
K= 2\left(\frac{q}{q-q_0}+\frac{q}{q_1-q}\right)^{\frac{1}{q}}.
\]
Plugging this back into \eqref{mark1}, we have
\[
A\leq C_g \left[2\left(\frac{q}{q-q_0}+\frac{q}{q_1-q}\right)^{\frac{1}{q}}\right]^2\left[\frac{n}{\pi}  2^5\left(\frac{p_0}{n-p_0}\right)^{\frac{p_0}{n}}\left(\frac{p_1}{n-p_1}\right)^{\frac{p_1}{n}}\right].
\]
Recall that we did all this with a variable $p_0\in (1,2)$. We can optimize this bound by finding its infimum over this interval. Thus we take 
\[
L(n)=\inf_{p_0\in (1,2)} \left\{n\frac{2^5}{\pi}\left[2\left(\frac{q}{q-q_0}+\frac{q}{q_1-q}\right)^{\frac{1}{q}}\right]^2\left(\frac{p_0}{n-p_0}\right)^{\frac{p_0}{n}}\left(\frac{p_1}{n-p_1}\right)^{\frac{p_1}{n}}\right\}.
\]
For any $n\geq 3$, this infimum is achieved for some $p_0\in (1,2)$. We can calculate numerical bounds for $L(n)$ using a computer:
\[
L(n)<\begin{cases}
646 & n=3\\
434 & n=4\\
430 & n=5\\
78n & n\geq 6.
\end{cases}.
\]


\begin{remark}
In {\cite[Theorem 5.4]{Ye}}, Ye gives precise values of $p_0,p_1$ of  
\[
p_0=\frac{n+2}{n} \;\;\text{ and } \; \;  p_1=\frac{n+2}{2}.
\]
\end{remark}

\end{appendix}

\bibliography{longtime}

@incollection {Carron,
    AUTHOR = {Carron, Gilles},
     TITLE = {In\'egalit\'es isop\'erim\'etriques de {F}aber-{K}rahn et
              cons\'equences},
 BOOKTITLE = {Actes de la {T}able {R}onde de {G}\'eom\'etrie
              {D}iff\'erentielle ({L}uminy, 1992)},
    SERIES = {S\'emin. Congr.},
    VOLUME = {1},
     PAGES = {205--232},
 PUBLISHER = {Soc. Math. France, Paris},
      YEAR = {1996},
      ISBN = {2-85629-047-7},
   MRCLASS = {58G25 (31C15 46E35 58G11)},
  MRNUMBER = {1427759},
MRREVIEWER = {Thierry\ Coulhon},
}

@misc{HuangPeng,
      title={A note on a diffeomorphism criterion via long-time Ricci flow}, 
      author={Shaochuang Huang and Zhuo Peng},
      year={2025},
      eprint={2509.05802},
      archivePrefix={arXiv},
      primaryClass={math.DG},
      url={https://arxiv.org/abs/2509.05802}, 
}

@book {Lee,
    AUTHOR = {Lee, John M.},
     TITLE = {Introduction to {R}iemannian manifolds},
    SERIES = {Graduate Texts in Mathematics},
    VOLUME = {176},
   EDITION = {Second},
 PUBLISHER = {Springer, Cham},
      YEAR = {2018},
     PAGES = {xiii+437},
      ISBN = {978-3-319-91754-2; 978-3-319-91755-9},
   MRCLASS = {53-01 (53B20 53B30 53C20 53C21)},
  MRNUMBER = {3887684},
MRREVIEWER = {Robert\ J.\ Low},
}

@article {Chen,
    AUTHOR = {Chen, Bing-Long},
     TITLE = {Strong uniqueness of the {R}icci flow},
   JOURNAL = {J. Differential Geom.},
  FJOURNAL = {Journal of Differential Geometry},
    VOLUME = {82},
      YEAR = {2009},
    NUMBER = {2},
     PAGES = {363--382},
      ISSN = {0022-040X,1945-743X},
   MRCLASS = {53C44},
  MRNUMBER = {2520796},
MRREVIEWER = {Weimin\ Sheng},
       URL = {http://projecteuclid.org/euclid.jdg/1246888488},
}

@article {CZ,
    AUTHOR = {Chen, Bing-Long and Zhu, Xi-Ping},
     TITLE = {Uniqueness of the {R}icci flow on complete noncompact
              manifolds},
   JOURNAL = {J. Differential Geom.},
  FJOURNAL = {Journal of Differential Geometry},
    VOLUME = {74},
      YEAR = {2006},
    NUMBER = {1},
     PAGES = {119--154},
      ISSN = {0022-040X,1945-743X},
   MRCLASS = {53C44},
  MRNUMBER = {2260930},
MRREVIEWER = {Weimin\ Sheng},
       URL = {http://projecteuclid.org/euclid.jdg/1175266184},
}

@article {ChenEric,
    AUTHOR = {Chen, Eric},
     TITLE = {Convergence of the {R}icci flow on asymptotically flat
              manifolds with integral curvature pinching},
   JOURNAL = {Ann. Sc. Norm. Super. Pisa Cl. Sci. (5)},
  FJOURNAL = {Annali della Scuola Normale Superiore di Pisa. Classe di
              Scienze. Serie V},
    VOLUME = {23},
      YEAR = {2022},
    NUMBER = {1},
     PAGES = {15--48},
      ISSN = {0391-173X,2036-2145},
   MRCLASS = {53E20 (58J35)},
  MRNUMBER = {4407184},
MRREVIEWER = {Chandan\ Kumar\ Mondal},
}

@book {ChenThesis,
    AUTHOR = {Chen, Eric Christopher},
     TITLE = {Some {R}egularity {P}roperties for {T}wo {E}quations {A}rising
              from {F}lows},
      NOTE = {Thesis (Ph.D.)--Princeton University},
 PUBLISHER = {ProQuest LLC, Ann Arbor, MI},
      YEAR = {2019},
     PAGES = {124},
      ISBN = {978-1392-27136-0},
   MRCLASS = {99-05},
  MRNUMBER = {3988911},
       URL =
              {http://gateway.proquest.com/openurl?url_ver=Z39.88-2004&rft_val_fmt=info:ofi/fmt:kev:mtx:dissertation&res_dat=xri:pqm&rft_dat=xri:pqdiss:13885976},
}

@book {Chowetal,
    AUTHOR = {Chow, Bennett and Chu, Sun-Chin and Glickenstein, David and
              Guenther, Christine and Isenberg, James and Ivey, Tom and
              Knopf, Dan and Lu, Peng and Luo, Feng and Ni, Lei},
     TITLE = {The {R}icci flow: techniques and applications. {P}art {II}},
    SERIES = {Mathematical Surveys and Monographs},
    VOLUME = {144},
      NOTE = {Analytic aspects},
 PUBLISHER = {American Mathematical Society, Providence, RI},
      YEAR = {2008},
     PAGES = {xxvi+458},
      ISBN = {978-0-8218-4429-8},
   MRCLASS = {53C44 (35K55 53C21)},
  MRNUMBER = {2365237},
MRREVIEWER = {James\ Alexander\ McCoy},
       DOI = {10.1090/surv/144},
       URL = {https://doi.org/10.1090/surv/144},
}

@article {CCL,
    AUTHOR = {Chan, Pak-Yeung and Chen, Eric and Lee, Man-Chun},
     TITLE = {Small curvature concentration and {R}icci flow smoothing},
   JOURNAL = {J. Funct. Anal.},
  FJOURNAL = {Journal of Functional Analysis},
    VOLUME = {282},
      YEAR = {2022},
    NUMBER = {10},
     PAGES = {Paper No. 109420, 29},
      ISSN = {0022-1236,1096-0783},
   MRCLASS = {53E20},
  MRNUMBER = {4389006},
MRREVIEWER = {Tristan\ Ozuch},
       DOI = {10.1016/j.jfa.2022.109420},
       URL = {https://doi.org/10.1016/j.jfa.2022.109420},
}

@article {CHL,
    AUTHOR = {Chan, Pak-Yeung and Huang, Shaochuang and Lee, Man-Chun},
     TITLE = {Manifolds with small curvature concentration},
   JOURNAL = {Ann. PDE},
  FJOURNAL = {Annals of PDE. Journal Dedicated to the Analysis of Problems
              from Physical Sciences},
    VOLUME = {10},
      YEAR = {2024},
    NUMBER = {2},
     PAGES = {Paper No. 23, 31},
      ISSN = {2524-5317,2199-2576},
   MRCLASS = {53E20},
  MRNUMBER = {4804227},
MRREVIEWER = {Chandan\ Kumar\ Mondal},
       DOI = {10.1007/s40818-024-00183-y},
       URL = {https://doi.org/10.1007/s40818-024-00183-y},
}

@book {Davies,
    AUTHOR = {Davies, E. B.},
     TITLE = {Heat kernels and spectral theory},
    SERIES = {Cambridge Tracts in Mathematics},
    VOLUME = {92},
 PUBLISHER = {Cambridge University Press, Cambridge},
      YEAR = {1990},
     PAGES = {x+197},
      ISBN = {0-521-40997-7},
   MRCLASS = {35D05 (35-02 35J15 35K05 47F05 58G11)},
  MRNUMBER = {1103113},
}

@book {Folland,
    AUTHOR = {Folland, Gerald B.},
     TITLE = {Real analysis},
    SERIES = {Pure and Applied Mathematics (New York)},
   EDITION = {Second},
      NOTE = {Modern techniques and their applications,
              A Wiley-Interscience Publication},
 PUBLISHER = {John Wiley \& Sons, Inc., New York},
      YEAR = {1999},
     PAGES = {xvi+386},
      ISBN = {0-471-31716-0},
   MRCLASS = {00A05 (26-01 28-01 46-01)},
  MRNUMBER = {1681462},
}

@book {Grafakos,
    AUTHOR = {Grafakos, Loukas},
     TITLE = {Classical {F}ourier analysis},
    SERIES = {Graduate Texts in Mathematics},
    VOLUME = {249},
   EDITION = {Third},
 PUBLISHER = {Springer, New York},
      YEAR = {2014},
     PAGES = {xviii+638},
      ISBN = {978-1-4939-1193-6; 978-1-4939-1194-3},
   MRCLASS = {42-01 (42Bxx)},
  MRNUMBER = {3243734},
MRREVIEWER = {Atanas\ G.\ Stefanov},
       DOI = {10.1007/978-1-4939-1194-3},
       URL = {https://doi.org/10.1007/978-1-4939-1194-3},
}

@article {Li,
    AUTHOR = {Li, Yu},
     TITLE = {Ricci flow on asymptotically {E}uclidean manifolds},
   JOURNAL = {Geom. Topol.},
  FJOURNAL = {Geometry \& Topology},
    VOLUME = {22},
      YEAR = {2018},
    NUMBER = {3},
     PAGES = {1837--1891},
      ISSN = {1465-3060,1364-0380},
   MRCLASS = {53C44 (53C20)},
  MRNUMBER = {3780446},
MRREVIEWER = {Chih-Wei\ Chen},
       DOI = {10.2140/gt.2018.22.1837},
       URL = {https://doi.org/10.2140/gt.2018.22.1837},
}

@article {Martens,
    AUTHOR = {Martens, Adam},
     TITLE = {Sharpening a gap theorem: nonnegative {R}icci and small
              curvature concentration},
   JOURNAL = {Calc. Var. Partial Differential Equations},
  FJOURNAL = {Calculus of Variations and Partial Differential Equations},
    VOLUME = {64},
      YEAR = {2025},
    NUMBER = {3},
     PAGES = {Paper No. 79, 30},
      ISSN = {0944-2669,1432-0835},
   MRCLASS = {53E20 (53C21)},
  MRNUMBER = {4861062},
MRREVIEWER = {Chandan\ Kumar\ Mondal},
       DOI = {10.1007/s00526-024-02910-6},
       URL = {https://doi.org/10.1007/s00526-024-02910-6},
}

@article {MartensScalar,
    AUTHOR = {Martens, Adam},
     TITLE = {Removing scalar curvature assumption for {R}icci flow
              smoothing},
   JOURNAL = {Bull. Lond. Math. Soc.},
  FJOURNAL = {Bulletin of the London Mathematical Society},
    VOLUME = {57},
      YEAR = {2025},
    NUMBER = {7},
     PAGES = {1968--1989},
      ISSN = {0024-6093,1469-2120},
   MRCLASS = {53E20 (53C21)},
  MRNUMBER = {4936721},
       DOI = {10.1112/blms.70073},
       URL = {https://doi.org/10.1112/blms.70073},
}

@article {LeeTam,
    AUTHOR = {Lee, Man-Chun and Tam, Luen-Fai},
     TITLE = {Some local maximum principles along {R}icci flows},
   JOURNAL = {Canad. J. Math.},
  FJOURNAL = {Canadian Journal of Mathematics. Journal Canadien de
              Math\'ematiques},
    VOLUME = {74},
      YEAR = {2022},
    NUMBER = {2},
     PAGES = {329--348},
      ISSN = {0008-414X,1496-4279},
   MRCLASS = {53E20},
  MRNUMBER = {4410993},
MRREVIEWER = {Kin\ Ming\ Hui},
       DOI = {10.4153/S0008414X20000772},
       URL = {https://doi.org/10.4153/S0008414X20000772},
}

@article {CTY,
    AUTHOR = {Chau, Albert and Tam, Luen-Fai and Yu, Chengjie},
     TITLE = {Pseudolocality for the {R}icci flow and applications},
   JOURNAL = {Canad. J. Math.},
  FJOURNAL = {Canadian Journal of Mathematics. Journal Canadien de
              Math\'ematiques},
    VOLUME = {63},
      YEAR = {2011},
    NUMBER = {1},
     PAGES = {55--85},
      ISSN = {0008-414X,1496-4279},
   MRCLASS = {53C44 (53C55)},
  MRNUMBER = {2779131},
MRREVIEWER = {Takumi\ Yokota},
       DOI = {10.4153/CJM-2010-076-2},
       URL = {https://doi.org/10.4153/CJM-2010-076-2},
}

@article {ChanLee,
    AUTHOR = {Chan, Pak-Yeung and Lee, Man-Chun},
     TITLE = {Gap theorem on manifolds with small curvature concentration},
   JOURNAL = {Math. Z.},
  FJOURNAL = {Mathematische Zeitschrift},
    VOLUME = {308},
      YEAR = {2024},
    NUMBER = {1},
     PAGES = {Paper No. 9, 10},
      ISSN = {0025-5874,1432-1823},
   MRCLASS = {53E20},
  MRNUMBER = {4780037},
MRREVIEWER = {Tristan\ Ozuch},
       DOI = {10.1007/s00209-024-03570-0},
       URL = {https://doi.org/10.1007/s00209-024-03570-0},
}

@MISC {Mittens,
    TITLE = {About the {c}onstant from the {M}arcinkiewicz {I}nterpolation {T}heorem},
    AUTHOR = {Mittens},
    HOWPUBLISHED = "Mathematics Stack Exchange \url{https://math.stackexchange.com/q/4827813 (version: 2023-12-21)}"
}

@misc{Per,
      title={The entropy formula for the Ricci flow and its geometric applications}, 
      author={Grisha Perelman},
      year={2002},
      eprint={math/0211159},
      archivePrefix={arXiv},
      primaryClass={math.DG},
      url={https://arxiv.org/abs/math/0211159}, 
}

@article {Shi,
    AUTHOR = {Shi, Wan-Xiong},
     TITLE = {Deforming the metric on complete {R}iemannian manifolds},
   JOURNAL = {J. Differential Geom.},
  FJOURNAL = {Journal of Differential Geometry},
    VOLUME = {30},
      YEAR = {1989},
    NUMBER = {1},
     PAGES = {223--301},
      ISSN = {0022-040X,1945-743X},
   MRCLASS = {58G30 (53C20 58D25 58G11)},
  MRNUMBER = {1001277},
MRREVIEWER = {Friedbert\ Pr\"ufer},
       URL = {http://projecteuclid.org/euclid.jdg/1214443292},
}

@article {Shi2,
    AUTHOR = {Shi, Wan-Xiong},
     TITLE = {Ricci deformation of the metric on complete noncompact
              {R}iemannian manifolds},
   JOURNAL = {J. Differential Geom.},
  FJOURNAL = {Journal of Differential Geometry},
    VOLUME = {30},
      YEAR = {1989},
    NUMBER = {2},
     PAGES = {303--394},
      ISSN = {0022-040X,1945-743X},
   MRCLASS = {53C20 (58E11)},
  MRNUMBER = {1010165},
MRREVIEWER = {Keti\ Tenenblat},
       URL = {http://projecteuclid.org/euclid.jdg/1214443595},
}

@article {ST,
    AUTHOR = {Simon, Miles and Topping, Peter M.},
     TITLE = {Local control on the geometry in 3{D} {R}icci flow},
   JOURNAL = {J. Differential Geom.},
  FJOURNAL = {Journal of Differential Geometry},
    VOLUME = {122},
      YEAR = {2022},
    NUMBER = {3},
     PAGES = {467--518},
      ISSN = {0022-040X,1945-743X},
   MRCLASS = {53E20},
  MRNUMBER = {4544560},
MRREVIEWER = {Keaton\ Naff},
       DOI = {10.4310/jdg/1675712996},
       URL = {https://doi.org/10.4310/jdg/1675712996},
}

@book {SY,
    AUTHOR = {Schoen, R. and Yau, S.-T.},
     TITLE = {Lectures on differential geometry},
    SERIES = {Conference Proceedings and Lecture Notes in Geometry and
              Topology},
    VOLUME = {I},
      NOTE = {Lecture notes prepared by Wei Yue Ding, Kung Ching Chang [Gong
              Qing Zhang], Jia Qing Zhong and Yi Chao Xu,
              Translated from the Chinese by Ding and S. Y. Cheng,
              With a preface translated from the Chinese by Kaising Tso},
 PUBLISHER = {International Press, Cambridge, MA},
      YEAR = {1994},
     PAGES = {v+235},
      ISBN = {1-57146-012-8},
   MRCLASS = {53-01 (53-02 53C21 58G30)},
  MRNUMBER = {1333601},
MRREVIEWER = {Man\ Chun\ Leung},
}

@article {WangA,
    AUTHOR = {Wang, Bing},
     TITLE = {The local entropy along {R}icci flow {P}art {A}: the
              no-local-collapsing theorems},
   JOURNAL = {Camb. J. Math.},
  FJOURNAL = {Cambridge Journal of Mathematics},
    VOLUME = {6},
      YEAR = {2018},
    NUMBER = {3},
     PAGES = {267--346},
      ISSN = {2168-0930,2168-0949},
   MRCLASS = {53C44 (32J27 53C25)},
  MRNUMBER = {3855081},
MRREVIEWER = {Hung\ Thanh\ Tran},
       DOI = {10.4310/CJM.2018.v6.n3.a2},
       URL = {https://doi.org/10.4310/CJM.2018.v6.n3.a2},
}

@article {Ye,
    AUTHOR = {Ye, Rugang},
     TITLE = {The logarithmic {S}obolev and {S}obolev inequalities along the
              {R}icci flow},
   JOURNAL = {Commun. Math. Stat.},
  FJOURNAL = {Communications in Mathematics and Statistics},
    VOLUME = {3},
      YEAR = {2015},
    NUMBER = {1},
     PAGES = {1--36},
      ISSN = {2194-6701,2194-671X},
   MRCLASS = {53C44 (35A23 35K55)},
  MRNUMBER = {3333694},
MRREVIEWER = {Shu-Yu\ Hsu},
       DOI = {10.1007/s40304-015-0046-1},
       URL = {https://doi.org/10.1007/s40304-015-0046-1},
}

@article {Zhang,
    AUTHOR = {Zhang, Qi S.},
     TITLE = {Extremal of log {S}obolev inequality and {$W$} entropy on
              noncompact manifolds},
   JOURNAL = {J. Funct. Anal.},
  FJOURNAL = {Journal of Functional Analysis},
    VOLUME = {263},
      YEAR = {2012},
    NUMBER = {7},
     PAGES = {2051--2101},
      ISSN = {0022-1236,1096-0783},
   MRCLASS = {53Cxx},
  MRNUMBER = {2956934},
       DOI = {10.1016/j.jfa.2012.07.005},
       URL = {https://doi.org/10.1016/j.jfa.2012.07.005},
}
\end{document}